\newtheorem{theorem}{Theorem}[section]
\newtheorem{proposition}[theorem]{Proposition}
\newtheorem{lemma}[theorem]{Lemma}
\newtheorem{corollary}[theorem]{Corollary}
\theoremstyle{definition}
\newtheorem{definition}{Definition}%[section]
\newtheorem{convention}[definition]{Convention}
\theoremstyle{remark}
\newtheorem{remark}[theorem]{Remark}
\begin{document}

%
%%%%%%%%%% Local definition %%%%%%%%%%%%%%%
%
\newcommand{\ie}{i.e. }
\newcommand{\X}{\mathfrak{X}}
\newcommand{\C}{\mathcal{C}}
\newcommand{\W}{\mathcal{W}}
\newcommand{\F}{\mathcal{F}}
\newcommand{\T}{\mathcal{T}}
\newcommand{\LL}{\mathcal{L}}
\newcommand{\G}{\mathcal{G}}
\newcommand{\I}{\mathcal{I}}
\newcommand{\R}{\mathbb{R}}
\newcommand{\s}{\mathfrak{S}}
\newcommand{\n}{\nabla}
\newcommand{\nn}{\widetilde{\nabla}}
\newcommand{\tg}{\widetilde{g}}
\newcommand{\f}{\varphi}
\newcommand{\D}{{\rm d}}
\newcommand{\Id}{{\rm Id}}
\newcommand{\im}{{\rm im}}
\newcommand{\M}{(M,\f,\xi,\eta,g)}
\newcommand{\Span}{\mathrm{span}}
\newcommand{\lm}{\lambda}
\newcommand{\ta}{\theta}
\newcommand{\om}{\omega}
\newcommand{\sx}{\mathop{\mathfrak{S}}\limits_{x,y,z}}
\newcommand{\thmref}[1]{The\-o\-rem~\ref{#1}}
\newcommand{\propref}[1]{Pro\-po\-si\-ti\-on~\ref{#1}}
\newcommand{\secref}[1]{\S\ref{#1}}
\newcommand{\lemref}[1]{Lem\-ma~\ref{#1}}
\newcommand{\dfnref}[1]{De\-fi\-ni\-ti\-on~\ref{#1}}
\newcommand{\corref}[1]{Co\-rol\-la\-ry~\ref{#1}}
%
%%%%%%%%% Theorem-like environments %%%%%%%%%%%
%
%%%%%%%%%%%%%%%%%%%%%%%%%%%%%%%%%%%%%%%%%
%
\title[A classification of the torsion tensors]
{A classification of the torsion tensors \\ on almost contact
manifolds with B-metric}

\author{Mancho Manev}
\address[M. Manev\footnote{the corresponding author}]{Department of Algebra and Geometry\\
Faculty of Mathematics and Informatics\\
Plovdiv University\\
236 Bulgaria Blvd, Plovdiv 4027\\
Bulgaria} %
\email{mmanev@uni-plovdiv.bg}

\author{Miroslava Ivanova}
\address[M. Ivanova]{Department of Informatics and Mathematics\\
Faculty of Economics\\
Trakia University\\
Student Campus, Stara Zagora 6000\\
Bulgaria} %
\email{mivanova@uni-sz.bg}

\subjclass[2000]{Primary 53C05; Secondary 53C15, 53C50. }

\keywords{torsion, almost contact manifold, B-metric, natural
connection}

\begin{abstract}
The space of the torsion (0,3)-tensors of the linear connections
on almost contact manifolds with B-metric is decomposed in 15
orthogonal and invariant subspaces with respect to the action of
the structure group. Three known connections, preserving the
structure, are characterized regarding this classification.
\end{abstract}

\maketitle

\section*{Introduction} %delete * to number this section

The investigations of linear connections on almost contact
manifolds with B-metric take a central place in the study of the
differential geometry of these manifolds. The linear connections
preserving the metric are completely characterized by their
torsion tensors. In ac\-cord\-ance with our goals, it is important
to describe linear connections regarding the properties of their
torsion tensors with respect to the structures on the manifold.

Such a classification of the space of the torsion tensors is made
in \cite{GaMi87} in the case of almost complex manifolds with
Norden metric. These manifolds are the even-dimensional analogue
of the odd-dimensional almost contact manifolds.

The idea of decomposition of the space of the basic (0,3)-tensors,
generated by the covariant derivative of the fundamental tensor of
type $(1,1)$, is used by different authors in order to  obtain
classifications of manifolds with additional tensor structures.
For example, let us mention the classification of almost Hermitian
manifolds given in \cite{GrHe}, of almost complex manifolds with
Norden metric -- in \cite{GaBo}, of almost contact metric
manifolds -- in \cite{AlGa}, of almost contact manifolds with
B-metric -- in \cite{GaMiGr}, of Riemannian almost product
manifolds -- in \cite{Nav}, of Riemannian manifolds with traceless
almost product structure -- in \cite{StaGri}, of almost
paracontact metric manifolds -- in \cite{NakZam}, of almost
paracontact Riemannian manifolds of type $(n,n)$ -- in
\cite{ManSta01}.

The linear connections preserving the structure (also known as
natural connections) are particularly interesting in differential
geometry.
On an almost Hermitian manifold there exists a unique natural
connection $\n^C$ with a torsion $T$ which has the property
$T(J\cdot,J\cdot)=-T(\cdot,\cdot)$ with respect to the almost
complex structure $J$. This connection is known as the canonical
Hermitian connection or the Chern connection
\cite{Chern,Yano,YaKon}. An example of the natural Hermitian
connection is the first canonical connection of Lichnerowicz
$\n^L$ \cite{Li1,Li2}.
%In
%\cite{Gaud} it is noted that among the natural connections the
%first canonical connection is the one whose torsion has minimal
%possible norm.
According to \cite{Gaud}, there exists a one-parameter family of
canonical Hermitian connections $\n^t=t\n^C+(1-t)\n^L$. The
connection $\n^t$ obtained for $t=-1$ is called the Bismut
connection or the KT-connection, which is characterized with a
totally skew-symmetric torsion \cite{Bis}. The latter connection
has applications in heterotic  string theory and in 2-dimensional
supersymmetric $\sigma$-models as well as in type II string theory
when the torsion 3-form is closed \cite{GHR,Stro,IvPapa,IvIv}. In
\cite{Fri-Iv2} and \cite{Fri-Iv} all almost Hermitian and almost
contact metric structures admitting a connection with totally
skew-symmetric torsion tensor are described.
Natural connections of canonical type are considered on the
Riemannian almost product manifolds in
\cite{Dobr11-1,Dobr11-2,MekDobr} and on the almost complex
manifolds with Norden metric in \cite{GaMi87,GaGrMi,Mek-JTech}.
%The connection in \cite{GaGrMi} is the so-called B-connection,
%which is studied in the class of the locally conformal K\"ahlerian
%manifolds with Norden metric.
The Tanaka-Webster connection on a contact metric manifold is
introduced (\cite{Tanno,Tan,Web}) in the context of CR-geometry. A
natural connection with minimal torsion on the quaternionic
contact structures, introduced in \cite{Biq}, is known as the
Biquard connection.
%

%\bigskip

The goal of the present work is to describe the torsion space with
respect to the almost contact B-metric structure, which could be
used to study some natural connections on these manifolds.

This paper is organized as follows. In Sec.~\ref{sec:1}, we
present some necessary facts about the considered manifolds.
Sec.~\ref{sec:2} is devoted to the decomposition of the space of
torsion tensors on almost contact manifolds with B-metric. In
Sec.~\ref{sec:3}, we find the position of three known natural
connections in the obtained classification.

\begin{convention}\
\begin{enumerate}
    \item[(a)] We shall use $X$, $Y$, $Z$ to denote elements of
of the algebra $\X(M)$ on the smooth vector fields on $M$.
Moreover, $x$, $y$, $z$ will stand for arbitrary vectors in the
tangent space $T_pM$ of $M$ at an arbitrary point $p$ in $M$;
    \item[(b)]
    The notation $\sx$ means
the cyclic sum by the three arguments $x$, $y$, $z$. For example,
$\sx F(x,y,z)=F(x,y,z)+F(y,z,x)+F(z,x,y)$;
    \item[(c)]
    For the sake of brevity, we shall use the notation
    $\left\{A(x,y)\right\}_{[x\leftrightarrow
y]}$ for the difference $A(x,y)-A(y,x)$ and
$\left\{A(x,y)\right\}_{(x\leftrightarrow y)}$ for the sum
$A(x,y)+A(y,x)$, where $A$ is an arbitrary tensor. Similarly, we
use $\{A(x,y,z)\}_{[x\leftrightarrow y]}=A(x,y,z)-A(y,x,z)$ and
$\{A(x,y,z)\}_{(x\leftrightarrow y)}=A(x,y,z)+A(y,x,z)$ for any
tensor $A(x,y,z)$;
    \item[(d)] We shall use double subscripts separated by the symbol $\slash$.
    The former and latter subscripts regarding this symbol
correspond to the upper and down signs plus or minus in the same
equality, respectively.
For example, the notation %
$\F_{8/9}: F(x,y,z)=F(x,y,\xi)\eta(z)+F(x,z,\xi)\eta(y)$,
$F(x,y,\xi)=\pm F(y,x,\xi)=F(\f x,\f y,\xi)$ means %
$\F_{8}: F(x,y,z)=F(x,y,\xi)\eta(z)+F(x,z,\xi)\eta(y)$,
$F(x,y,\xi)= F(y,x,\xi)=F(\f x,\f y,\xi)$ %
and $\F_{9}: F(x,y,z)=F(x,y,\xi)\eta(z)+F(x,z,\xi)\eta(y)$,
$F(x,y,\xi)=- F(y,x,\xi)=F(\f x,\f y,\xi)$. Similarly,
$\W_{1,1/1,2}=\bigl\{T\in \W_1^-\ \vert\
L_{1,1}(T)\allowbreak{}=\mp T\bigr\}$ means $\W_{1,1}=\bigl\{T\in
\W_1^-\ \vert\ $ $L_{1,1}(T)=-T\bigr\}$ and $\W_{1,2}=\bigl\{T\in
\W_1^-\ \vert\ \allowbreak{}L_{1,1}(T)=T\bigr\}$.
\end{enumerate}
\end{convention}

\section{Almost Contact Manifolds with B-Metric}\label{sec:1}

Let $(M,\f,\xi,\eta,g)$ be an almost contact manifold with
B-met\-ric or an \emph{almost contact B-metric manifold}, \ie $M$
is a $(2n+1)$-dimensional differentiable manifold with an almost
contact structure $(\f,\xi,\eta)$ consisting of an endomorphism
$\f$ of the tangent bundle, a vector field $\xi$, its dual 1-form
$\eta$ as well as $M$ is equipped with a pseudo-Rie\-mannian
metric $g$  of signature $(n,n+1)$, such that the following
algebraic relations are satisfied: $\f\xi = 0$, $\f^2 = -\Id +
\eta \otimes \xi$, $\eta\circ\f=0$, $\eta(\xi)=1$, $g(\f X, \f Y )
= - g(X, Y ) + \eta(X)\eta(Y)$ \cite{GaMiGr}.

%
%More precisely, it consists of real square matrices of order
%$2n+1$ of the following type
%\[
%\left(%
%\begin{array}{r|c|c}
%  A & B & \vartheta^T\\ \hline
%  -B & A & \vartheta^T\\ \hline
%  \vartheta & \vartheta & 1 \\
%\end{array}%
%\right),\qquad %
%\begin{array}{l}
%  AA^T-BB^T=I_n,\\[4pt]%
%  AB^T+BA^T=O_n,
%\end{array}%
%\quad A, B\in \mathcal{GL}(n;\mathbb{R}),
%\]
%where $\vartheta$ and its transpose $\vartheta^T$ are the zero row
%$n$-vector and the zero column $n$-vector; $I_n$ and $O_n$ are the
%unit matrix and the zero matrix of size $n$, respectively.

The associated metric $\tg$ of $g$ on $M$ is defined by \(
\tg(X,Y)=g(X,\f Y)+\eta(X)\eta(Y). \) The manifold
$(M,\f,\xi,\eta,\tg)$ is also an almost contact B-metric manifold.
Both metrics $g$ and $\tg$ are necessarily of signature $(n,n+1)$.
The Levi-Civita connection of $g$ and $\tg$ will be denoted by
$\n$ and $\nn$, respectively.

Let us remark that the $2n$-dimensional contact distribution
$H=\ker(\eta)$, generated by the contact 1-form $\eta$, can be
considered as the horizontal distribution of the
sub-Rie\-mann\-ian manifold $M$. Then $H$ is endowed with an
almost complex structure determined as $\f|_H$ -- the restriction
of $\f$ on $H$, as well as a Norden metric $g|_H$, i.e.
$g|_H(\f|_H\cdot,\f|_H\cdot)=-g|_H(\cdot,\cdot)$. Moreover, $H$
can be considered as a $n$-dimensional complex Riemannian
man\-i\-fold with a complex Riemannian metric
$g^{\mathbb{C}}=g|_H+i\tg|_H$ \cite{GaIv}.

The structure group of $\M$ is $\G\times\I$, where $\I$ is the
identity on $\Span(\xi)$ and $\G=\mathcal{GL}(n;\mathbb{C})\cap
\mathcal{O}(n,n)$, i.e. it consists of the real square matrices of
order $2n+1$ of the following type
\[
\left(%
\begin{array}{r|c|c}
  A & B & \vartheta^T\\ \hline
  -B & A & \vartheta^T\\ \hline
  \vartheta & \vartheta & 1 \\
\end{array}%
\right),\qquad %
\begin{array}{l}
  A^TA-B^TB=I_n,\\%
  B^TA+A^TB=O_n,
\end{array}%
\quad A, B\in \mathcal{GL}(n;\mathbb{R}),
\]
where $\vartheta$ and its transpose $\vartheta^T$ are the zero row
$n$-vector and the zero column $n$-vector; $I_n$ and $O_n$ are the
unit matrix and the zero matrix of size $n$, respectively.

A classification of almost contact manifolds with B-metric is
given in \cite{GaMiGr}. This classification, consisting of eleven
basic classes $\F_1$, $\F_2$, $\dots$, $\F_{11}$,  is made with
respect to the tensor  $F$ of type (0,3) defined by
%\begin{equation*}%\label{F=nfi}
$F(x,y,z)=g\bigl( \left( \nabla_x \f \right)y,z\bigr)$
%\end{equation*}
and having the following properties
%\begin{equation*}%\label{F-prop}
\(F(x,y,z)=F(x,z,y)=F(x,\f y,\f z)+\eta(y)F(x,\xi,z)%\\[4pt]
+\eta(z)F(x,y,\xi). \)%\end{equation*}

If $\left\{e_i;\xi\right\}$ $(i=1,2,\dots,2n)$ is a basis of
$T_pM$ and $\left(g^{ij}\right)$ is the inverse matrix of
$\left(g_{ij}\right)$, then the following 1-forms are associated
with $F$: $\theta(z)=g^{ij}F(e_i,e_j,z)$,
$\theta^*(z)=g^{ij}F(e_i,\f e_j,z)$, $\omega(z)=F(\xi,\xi,z)$.

Further we use the following characteristic conditions of the
basic classes:
\begin{equation}\label{Fi}
\begin{array}{rl}
\F_{1}: &F(x,y,z)=\frac{1}{2n}\bigl\{g(x,\f y)\ta(\f z)+g(\f x,\f
y)\ta(\f^2 z)
\bigr\}_{(y\leftrightarrow z)};\\[4pt]
\F_{2}: &F(\xi,y,z)=F(x,\xi,z)=0,\quad
              \sx F(x,y,\f z)=0,\quad \ta=0;\\[4pt]
\F_{3}: &F(\xi,y,z)=F(x,\xi,z)=0,\quad
              \sx F(x,y,z)=0;\\[4pt]
\F_{4}: &F(x,y,z)=-\frac{1}{2n}\ta(\xi)\bigl\{g(\f x,\f y)\eta(z)+g(\f x,\f z)\eta(y)\bigr\};\\[4pt]
\F_{5}: &F(x,y,z)=-\frac{1}{2n}\ta^*(\xi)\bigl\{g( x,\f y)\eta(z)+g(x,\f z)\eta(y)\bigr\};\\[4pt]
\F_{6/7}: &F(x,y,z)=F(x,y,\xi)\eta(z)+F(x,z,\xi)\eta(y),\quad \\[4pt]
                &F(x,y,\xi)=\pm F(y,x,\xi)=-F(\f x,\f y,\xi),\quad \ta=\ta^*=0; \\[4pt]
\F_{8/9}: &F(x,y,z)=F(x,y,\xi)\eta(z)+F(x,z,\xi)\eta(y),\\[4pt]
                &F(x,y,\xi)=\pm F(y,x,\xi)=F(\f x,\f y,\xi); \\[4pt]
\F_{10}: &F(x,y,z)=F(\xi,\f y,\f z)\eta(x); \\[4pt]
\F_{11}:
&F(x,y,z)=\eta(x)\left\{\eta(y)\om(z)+\eta(z)\om(y)\right\}.
\end{array}
\end{equation}

The intersection of the basic classes is the special class $\F_0$
determined by the condition $F(x,y,z)=0$. Hence $\F_0$ is the
class of almost contact B-metric manifolds with $\n$-parallel
structures, i.e. $\n\f=\n\xi=\n\eta=\n g=\n \tilde{g}=0$.

\subsection{Associated tensor of the Nijenhuis
tensor}\label{sec:1a}\

The Nijenhuis tensor of the contact structure is defined by $N =
[\f, \f]+ \D{\eta}\otimes\xi, $ where $[\f, \f](x, y)=\left[\f
x,\f y\right]+\f^2\left[x,y\right]-\f\left[\f
x,y\right]-\f\left[x,\f y\right]$ %
is the Nijenhuis torsion of $\f$ and $\D{\eta}$ is the exterior
derivative of the 1-form $\eta$.

By analogy with the skew-symmetric Lie bracket $[x,y]=\n_x y-\n_y
x$, let us consider the symmetric bracket $\{x,y\}=\n_x y+\n_y x$.
Then we introduce the symmetric tensor $\{\f, \f\}(x, y)=\{\f x,\f
y\}+\f^2\{x,y\}-\f\{\f x,y\}-\f\{x,\f
 y\}$.
%in correspondence with the skew-symmetric tensor $[\f,\f](x,y)$.
%
Additionally, we use the Lie derivative of the metric $g$ along
$\xi$, \ie $\left(\LL_{\xi}g\right)(x,y)
 =\left(\n_x\eta\right)y+\left(\n_y\eta\right)x,$ as an alternative
 of
 $\D{\eta}(x,y)=\left(\n_x\eta\right)y-\left(\n_y\eta\right)x.$
Then, we define
an associated tensor $\widehat{N}$ with $N$ %in
%\cite{Mira-Diss,ManIv36}
by:
\begin{equation}\label{S}
\widehat{N} =\{\f,\f\}+\left(\LL_{\xi}g\right)\otimes\xi.
\end{equation}

It is well known that the Nijenhuis tensor $N$ is determined by
covariant derivatives of $\f$ and $\eta$ with respect to $\n$ as
follows:
\begin{equation}\label{N}
N(x,y)=\left\{\left(\n_{\f
x}\f\right)y-\f\left(\n_{x}\f\right)y+\left(\n_{x}\eta\right)y\cdot\xi\right\}_{[x\leftrightarrow
y]}.
\end{equation}

\begin{proposition}\label{prop-Nhat=nabla}
The tensor $\widehat{N}$ has the following form in terms of
  $\n\f$ and $\n\eta$:
\begin{equation}\label{Sn}
\widehat{N}(x,y)=\left\{\left(\n_{\f
x}\f\right)y-\f\left(\n_{x}\f\right)y+\left(\n_{x}\eta\right)y\cdot\xi
\right\}_{(x\leftrightarrow y)}.
\end{equation}
\end{proposition}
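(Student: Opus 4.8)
The plan is to obtain \eqref{Sn} by a direct computation that mirrors the known derivation of \eqref{N}, replacing throughout the antisymmetric Lie bracket $[x,y]=\n_x y-\n_y x$ by the symmetric bracket $\{x,y\}=\n_x y+\n_y x$, and $\D\eta$ by $\LL_{\xi}g$. Since the Levi-Civita connection $\n$ is torsion-free, both brackets are expressible through $\n$, so the tensor $\widehat{N}$ defined in \eqref{S} becomes a purely tensorial expression in $\n\f$ and $\n\eta$, exactly as $N$ does in \eqref{N}.

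First I would expand the symmetric tensor $\{\f,\f\}(x,y)=\{\f x,\f y\}+\f^2\{x,y\}-\f\{\f x,y\}-\f\{x,\f y\}$ by substituting $\{u,v\}=\n_u v+\n_v u$ into each of the four summands, and then applying the Leibniz rule $\n_u(\f w)=(\n_u\f)w+\f\n_u w$ to split off the derivatives of $\f$ in every term. After this substitution there appear eight ``pure-connection'' terms carrying no derivative of $\f$, namely those of the shape $\f\n_{\f x}y$, $\f\n_{\f y}x$ and $\f^2\n_x y$, $\f^2\n_y x$. The essential point is that these cancel in pairs: the contributions $+\f\n_{\f x}y+\f\n_{\f y}x$ coming from $\{\f x,\f y\}$ are exactly cancelled by $-\f\n_{\f x}y-\f\n_{\f y}x$ coming from $-\f\{\f x,y\}-\f\{x,\f y\}$, while the two $\f^2$-terms of $\f^2\{x,y\}$ are annihilated by the remaining $\f^2$-terms of those same two summands. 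What survives is precisely $\left\{(\n_{\f x}\f)y-\f(\n_x\f)y\right\}_{(x\leftrightarrow y)}$.

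Finally, using $(\LL_{\xi}g)(x,y)=(\n_x\eta)y+(\n_y\eta)x=\left\{(\n_x\eta)y\right\}_{(x\leftrightarrow y)}$, the summand $(\LL_{\xi}g)\otimes\xi$ of \eqref{S} contributes $\left\{(\n_x\eta)y\cdot\xi\right\}_{(x\leftrightarrow y)}$, and adding it to the surviving expression above yields \eqref{Sn}. The only genuine work lies in the bookkeeping needed to confirm that the eight pure-connection terms vanish in pairs; there is no conceptual obstacle, since the computation is the exact symmetric analogue of the antisymmetric one that produces \eqref{N}. Comparing the two derivations side by side, the sole difference is the sign pattern forced by $\{x,y\}$ versus $[x,y]$, which turns the antisymmetrizing brackets $\{\,\cdot\,\}_{[x\leftrightarrow y]}$ of \eqref{N} into the symmetrizing brackets $\{\,\cdot\,\}_{(x\leftrightarrow y)}$ of \eqref{Sn}.
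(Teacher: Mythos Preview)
Your proposal is correct and follows essentially the same route as the paper's proof: both expand the symmetric bracket $\{\f,\f\}(x,y)$ in terms of $\n$, use the Leibniz identity $(\n_u\f)w=\n_u(\f w)-\f\n_u w$ to separate out the covariant derivatives of $\f$, observe that the remaining ``pure-connection'' terms cancel in pairs, and then append the $(\LL_\xi g)\otimes\xi$ contribution. The only difference is cosmetic---the paper groups terms like $\n_{\f x}\f y-\f\n_{\f x}y$ directly into $(\n_{\f x}\f)y$ without first naming and counting the cancelling terms---but the underlying computation is identical.
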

\begin{proof}
We obtain immediately
\[
\begin{array}{l}
\widehat{N}(x,y)
=\{\f,\f\}(x,y)+\left(\LL_{\xi}g\right)(x,y)\cdot\xi=\{\f x,\f
y\}+\f^2\{x,y\}-\f\{\f x,y\}-\f\{x,\f y\}
\\[4pt]
\phantom{\widehat{N}(x,y)=}
+\left(\n_x\eta\right)y\cdot\xi+\left(\n_y\eta\right)x\cdot\xi=\n_{\f
x}\f y+\n_{\f y}\f x +\f^2\n_{x}y+\f^2\n_{y}x-\f\n_{\f
x}y\\[4pt]
\phantom{\widehat{N}(x,y)=} %
-\f\n_{y}\f x-\f\n_{x}\f y-\f\n_{\f
y}x+\left(\n_{x}\eta\right)y\cdot\xi
+\left(\n_{y}\eta\right)x\cdot\xi\\[4pt]
\phantom{\widehat{N}(x,y)} %
=\left(\n_{\f x}\f\right)y+\left(\n_{\f
y}\f\right)x-\f\left(\n_{x}\f\right)y-\f\left(\n_{y}\f\right)x
+\left(\n_{x}\eta\right)y\cdot\xi+\left(\n_{y}\eta\right)x\cdot\xi,
\end{array}
\]
which completes the proof.
\end{proof}

It is known that the class of the normal almost contact B-metric
manifolds, \ie $N=0$, is
$\F_1\oplus\F_2\oplus\F_4\oplus\F_5\oplus\F_6$.

\begin{proposition}\label{prop-Nhat=0}
The class of the almost contact B-metric manifolds with
$\widehat{N}=0$ is $\F_3\oplus\F_7$.
\end{proposition}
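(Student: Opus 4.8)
The plan is to turn the vector-valued tensor $\widehat N$ of \eqref{Sn} into a $(0,3)$-tensor and then read off the answer slice by slice. First I would record three pointwise identities. Since $\eta(\cdot)=g(\cdot,\xi)$ and $\n g=0$, one gets $(\n_x\eta)y=g(\n_x\xi,y)$ and, using $\eta\circ\f=0$ and $\f^2=-\Id+\eta\otimes\xi$, the relation $(\n_x\eta)y=F(x,\f y,\xi)$. From $g(\f x,\f y)=-g(x,y)+\eta(x)\eta(y)$ the endomorphism $\f$ is $g$-symmetric, so $g\bigl(\f(\n_x\f)y,z\bigr)=g\bigl((\n_x\f)y,\f z\bigr)=F(x,y,\f z)$, while $g\bigl((\n_{\f x}\f)y,z\bigr)=F(\f x,y,z)$ directly. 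Substituting these into \eqref{Sn} gives
\[
g(\widehat N(x,y),z)=\bigl\{F(\f x,y,z)-F(x,y,\f z)+F(x,\f y,\xi)\eta(z)\bigr\}_{(x\leftrightarrow y)},
\]
a tensor symmetric in $x,y$. Because $F\mapsto g(\widehat N(x,y),z)$ is linear, I would evaluate it on the components of \eqref{Fi} after splitting $T_pM=H\oplus\Span(\xi)$ into the slices where $x,y,z$ are horizontal, where exactly one argument is $\xi$, and where two are $\xi$; since distinct basic classes contribute terms of distinct tensorial type in each slice, no cross-cancellation occurs, so $\widehat N=0$ iff every offending component vanishes.

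On the purely horizontal slice the $\eta(z)$-term drops and, by \eqref{Fi}, only $\F_1,\F_2,\F_3$ survive. Here I would use the horizontal identities $F(x,y,z)=F(x,\f y,\f z)$ and $F(x,y,z)=F(x,z,y)$, which together yield $F(x,y,\f z)=-F(x,\f y,z)$, and then apply the cyclic relation to the triples $(\f x,y,z)$ and $(\f y,x,z)$. After cancelling the two leftover terms (which vanish by the previous identity), the expression collapses to
\[
g(\widehat N(x,y),z)=C(\f x,y,z)+C(\f y,x,z),\qquad C:=\sx F,
\]
where $C$ is totally symmetric because $F$ is symmetric in its last two slots. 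A short symmetry argument then shows the vanishing of this sum forces $C\equiv0$: the tensor $(x,y,z)\mapsto C(\f x,y,z)$ would be antisymmetric in $x,y$ yet symmetric in $y,z$, hence identically zero, and $\f$ is invertible on $H$. Thus the horizontal slice vanishes exactly when $\sx F=0$, i.e. when the horizontal part of $F$ lies in $\F_3$ (so $\F_1,\F_2$ are excluded).

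It then remains to treat the slices containing $\xi$, using $\f\xi=0$ and the defining property $F(x,y,z)=F(x,\f y,\f z)+\eta(y)F(x,\xi,z)+\eta(z)F(x,y,\xi)$. Putting $z=\xi$ produces nonzero values on $\F_4,\F_5,\F_6$ while killing $\F_7,\F_8,\F_9,\F_{10},\F_{11}$; putting $x=\xi$ with $y,z$ horizontal then produces nonzero values on $\F_8,\F_9,\F_{10}$ while $\F_7$ still vanishes; and the two-$\xi$ slice, which picks up $\om=F(\xi,\xi,\cdot)$, eliminates $\F_{11}$ but is again annihilated by $\F_7$. The separation of $\F_7$ from $\F_6,\F_8,\F_9$ hinges on the parity of $h(x,y):=F(x,y,\xi)$: for $\F_7$ it is skew with $h(x,y)=-h(\f x,\f y)$, which makes every relevant sum telescope to zero, whereas for each of the other three at least one slice survives. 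Combining all slices shows $\widehat N=0$ forces the horizontal part into $\F_3$ and annihilates $\F_1,\F_2,\F_4,\F_5,\F_6,\F_8,\F_9,\F_{10},\F_{11}$, leaving exactly $\F_3\oplus\F_7$; the converse is the direct check that both $\F_3$ and $\F_7$ pass every slice. I expect the main obstacle to be the horizontal computation — correctly exploiting the cyclic condition to reduce to $C$ and then excluding cancellation by the antisymmetric/symmetric argument — together with the careful bookkeeping that distinguishes $\F_7$ from $\F_6,\F_8,\F_9$ using two different slices rather than a single one.
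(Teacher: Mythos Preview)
Your route---deriving the $(0,3)$-formula $g(\widehat N(x,y),z)=\{F(\f x,y,z)-F(x,y,\f z)+F(x,\f y,\xi)\eta(z)\}_{(x\leftrightarrow y)}$ and then analysing it slice by slice along $T_pM=H\oplus\Span(\xi)$---is different from the paper's proof, which simply substitutes each defining relation from \eqref{Fi} into \eqref{Sn} and tabulates $\widehat N$ class by class. Your horizontal step, collapsing $g(\widehat N,\cdot)|_H$ to $C(\f x,y,z)+C(\f y,x,z)$ with $C=\sx F$ and then killing $C$ by the symmetric/antisymmetric trick, is more conceptual than the paper's separate verification of $\F_1,\F_2,\F_3$, and your derivation of the scalar formula is correct.

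There is, however, a genuine gap in the ``no cross-cancellation'' clause, and hence in your claimed strong biconditional. On the slice $x=\xi$, $y,z\in H$, both $\F_8$ and $\F_{10}$ contribute, via $2h(\f y,z)$ and $-k(y,\f z)$ respectively, where $h(u,v)=F_8(u,v,\xi)$ and $k(u,v)=F_{10}(\xi,u,v)$ are \emph{the same kind of data}: symmetric bilinear forms on $H$ satisfying $h(\f\cdot,\f\cdot)=h$. Choosing $k=-2h$ yields a nonzero $F\in\F_8\oplus\F_{10}$ with $\widehat N(F)=0$ on every slice, so ``$\widehat N=0$ iff every offending component vanishes'' is false as stated; the kernel of $F\mapsto\widehat N$ is not a direct sum of basic classes. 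The proposition is to be read in the classification sense---$\F_3\oplus\F_7$ is the union of those basic classes on which $\widehat N$ vanishes identically---and for that you need only verify $\widehat N\equiv0$ on $\F_3,\F_7$ and $\widehat N\not\equiv0$ on each remaining $\F_i$. The paper's class-by-class list does exactly this, and your own slice checks already deliver it once you drop the unwarranted ``iff''.
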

\begin{proof}
By virtue of \eqref{Sn} and the form of $F(x,y,z)=g\bigl( \left(
\nabla_x \f \right)y,z\bigr)$ in \eqref{Fi}, we establish that
$\widehat{N}$ has the following form on $M=\M$ belonging to $\F_i$
$(i=1,2, \dots, 11)$, respectively:
\[
\begin{array}{ll}
    \widehat{N}(x,y)=\frac{2}{n}\bigl\{g(\f x,\f y)\f \Theta+g(x,\f
y)\Theta\bigr\}, \quad &M\in\F_1;\\[4pt]
    \widehat{N}(x,y)=2\left\{\left(\n_{\f
x}\f\right)y-\f\left(\n_{x}\f\right)y\right\}, \quad
&M\in\F_2;\\[4pt]
    \widehat{N}(x,y)=0,
\quad &M\in\F_3\oplus \F_7;\\[4pt]
    \widehat{N}(x,y)=\frac{2}{n}\ta(\xi)g(x,\f y) \cdot \xi,
\quad &M\in\F_4;\\[4pt]
\end{array}
\]
\[
\begin{array}{ll}
    \widehat{N}(x,y)=-\frac{2}{n}\ta^*(\xi)g(\f x,\f y) \cdot \xi,
\quad &M\in\F_5;\\[4pt]
    \widehat{N}(x,y)=4\left(\n_{x}\eta\right)y \cdot \xi,
\quad &M\in\F_6;\\[4pt]
    \widehat{N}(x,y)=-2\left\{\eta(x) \n_{y}\xi+\eta(y)
\n_{x}\xi\right\}, \quad &M\in\F_8\oplus\F_9;\\[4pt]
    \widehat{N}(x,y)=-\left\{\eta(x)\f \left(
\n_{\xi}\f\right)y+\eta(y)\f \left( \n_{\xi}\f\right)x\right\},
\quad &M\in\F_{10};\\[4pt]
    \widehat{N}(x,y)=-2\eta(x)\eta(y)\f \Omega%\\[4pt]
%\phantom{\widehat{N}(x,y)=}
+\left\{\eta(x)\om(\f y)+\eta(y)\om(\f
x)\right\} \cdot \xi, \quad &M\in\F_{11},
\end{array}
\]
where $\ta(z)=g(\Theta,z)$ and $\om(z)=g(\Omega,z)$. Then the
truthfulness of the statement follows.
\end{proof}

\section{A Decomposition of the Space of Torsion Tensors}\label{sec:2}

The object of our considerations are the linear connections with
torsion. Thus, we have to study the properties of the torsion
tensors with respect to the contact structure and the B-metric.

If $T$ is the torsion tensor of $D$, \ie $T(x,y)=D_x y-D_y x-[x,
y]$, then the corre\-sponding tensor of type (0,3) is determined
by $T(x,y,z)=g(T(x,y),z)$.

Let us consider $T_pM$ at arbitrary $p\in M$ as a
$(2n+1)$-dimension\-al vector space with almost contact B-metric
structure $(V,\f,\xi,\eta,g)$. Moreover, let $\T$ be the vector
space of all tensors $T$ of type (0,3) over $V$ having
skew-symmetry by the first two arguments, \ie \[
\T=\left\{T(x,y,z)\in\R,\; x,y,z\in V \; \vert\;\;
T(x,y,z)=-T(y,x,z)\right\}. \]

The metric $g$ induces an inner product
$\langle\cdot,\cdot\rangle$ on $\T$ defined by $\langle
T_1,T_2\rangle=g^{iq}g^{jr}g^{ks}\allowbreak{}
T_1(e_i,e_j,e_k)\allowbreak{}T_2(e_q,e_r,e_s)$ for any
$T_{1,2}\in\T$ and a basis $\left\{e_i\right\}$
$(i=1,2,\dots,2n+1)$ of $V$.

The standard representation of the structure group $\G\times\I$
 in $V$ induces a natural representation $\lm$ of $\G\times\I$ in
 $\T$ as follows
$ \left((\lm
a)T\right)(x,y,z)=T\left(a^{-1}x,a^{-1}y,a^{-1}z\right) $ for any
$a\in \G\times\I$ and $T\in\T$, so that \( \langle(\lm a)T_1,(\lm
a)T_2\rangle=\langle T_1,T_2\rangle\), \(T_1,T_2\in \T. \)

The decomposition $x=-\f^2x+\eta(x)\xi$ generates the projectors
$h$ and $v$ on $V$ determined by $h(x)=-\f^2x$ and
$v(x)=\eta(x)\xi$ and having the properties $h\circ h =h$, $v\circ
v=v$, $h\circ v=\allowbreak{}v\circ h=0$. Therefore, we have the
orthogonal decomposition $V=h(V)\oplus v(V)$.

Bearing in mind these projectors on $V$, we construct a partial
decomposition of $\T$ as follows.

At first, we define the operator $p_1:\ \T\rightarrow\T$ by
\[
p_1(T)(x,y,z)=-T(\f^2x,\f^2y,\f^2z),\quad T\in\T.
\]
It is easy to check the following
\begin{lemma}\label{lem-p1}
The operator $p_1$ has the following properties$:$\\[4pt]
\begin{tabular}{rlrl}
    $($i$)$& $p_1\circ p_1 = p_1;\qquad\qquad\qquad$&
    $($ii$)$& $\langle p_1(T_1),T_2\rangle=\langle
    T_1,p_1(T_2)\rangle,\quad T_1, T_2 \in\T;$\\
    $($iii$)$& $p_1\circ (\lm a)=(\lm a)\circ p_1$.&&
\end{tabular}
\end{lemma}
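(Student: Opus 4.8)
The plan is to verify the three stated properties of the operator $p_1(T)(x,y,z)=-T(\f^2x,\f^2y,\f^2z)$ by direct computation, relying on the algebraic identities $\f^2=-\Id+\eta\otimes\xi$ and $\f\xi=0$, together with the metric relation $g(\f x,\f y)=-g(x,y)+\eta(x)\eta(y)$.

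For property (i), the idempotency $p_1\circ p_1=p_1$, I would compute $p_1\bigl(p_1(T)\bigr)(x,y,z)=-p_1(T)(\f^2x,\f^2y,\f^2z)=T(\f^4x,\f^4y,\f^4z)$ and then reduce using the observation that $\f^2$ acts as minus the identity on the horizontal distribution $H=\ker\eta$ and kills $\xi$. Concretely, since $\eta\circ\f^2=0$ one gets $\f^2(\f^2 x)=-\f^2 x+\eta(\f^2 x)\xi=-\f^2 x$, so $\f^4=-\f^2$. Hence $T(\f^4x,\f^4y,\f^4z)=T(-\f^2x,-\f^2y,-\f^2z)=-T(\f^2x,\f^2y,\f^2z)=p_1(T)(x,y,z)$, which gives (i).

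For property (ii), the self-adjointness with respect to $\langle\cdot,\cdot\rangle$, I would expand $\langle p_1(T_1),T_2\rangle$ in the basis using the inner product formula and absorb each $\f^2$ appearing on the arguments of $p_1(T_1)$ into a contraction against the metric. The key step is that the contraction $g^{iq}T_1(\f^2 e_i,\ldots)$ can be rewritten by transferring $\f^2$ onto the other factor, i.e.\ $g^{iq}A(\f^2 e_i)B(e_q)=g^{iq}A(e_i)B(\f^2 e_q)$, because $\f^2$ is $g$-self-adjoint (which follows from the B-metric relation, as $g(\f^2 x,y)=-g(x,y)+\eta(x)\eta(y)=g(x,\f^2 y)$). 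Applying this on all three index pairs moves the three $\f^2$ factors from $T_1$ onto $T_2$, together with the sign $(-1)^3=-1$ from the three minus signs that is compensated by the defining minus sign of $p_1$, yielding $\langle T_1,p_1(T_2)\rangle$.

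For property (iii), the equivariance $p_1\circ(\lm a)=(\lm a)\circ p_1$, the essential point is that every $a\in\G\times\I$ commutes with $\f$, hence with $\f^2$; this is exactly what the block-matrix form of the structure group encodes (the blocks $A,B$ realize multiplication by a complex structure on $H$, and $\xi$ is fixed). I would simply compute $\bigl(p_1((\lm a)T)\bigr)(x,y,z)=-\bigl((\lm a)T\bigr)(\f^2x,\f^2y,\f^2z)=-T(a^{-1}\f^2x,a^{-1}\f^2y,a^{-1}\f^2z)$ and use $a^{-1}\f^2=\f^2 a^{-1}$ to rewrite this as $-T(\f^2 a^{-1}x,\f^2 a^{-1}y,\f^2 a^{-1}z)=p_1(T)(a^{-1}x,a^{-1}y,a^{-1}z)=\bigl((\lm a)p_1(T)\bigr)(x,y,z)$. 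I expect the only mild obstacle to be bookkeeping: making the commutation $a\f=\f a$ explicit from the matrix description and keeping the signs consistent across the three arguments in (i) and (ii); none of the steps is genuinely difficult, which is consistent with the lemma being stated as ``easy to check.''
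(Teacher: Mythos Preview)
Your proposal is correct and matches the paper's approach, which is simply the direct verification that the authors summarize by ``It is easy to check.'' One small bookkeeping correction in part~(ii): since $\f^2$ is $g$-self-adjoint (as you note), transferring each $\f^2$ across a $g^{iq}$-contraction introduces \emph{no} sign, so there is no factor $(-1)^3$; the single minus sign in the definition of $p_1(T_1)$ becomes exactly the single minus sign in the definition of $p_1(T_2)$, and that is what makes $\langle p_1(T_1),T_2\rangle=\langle T_1,p_1(T_2)\rangle$.
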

According to \lemref{lem-p1} we have the following orthogonal
decomposition of $\T$ by the image and the kernel of $p_1$:
\[
\begin{split}
\W_1=\im(p_1)=\left\{T\in\T\ \vert\ p_1(T)=T\right\},\quad%\\[4pt]
\W_1^\bot=\ker(p_1)=\left\{T\in\T\ \vert\ p_1(T)=0\right\}.
\end{split}
\]

Further, we consider the operator $p_2:\
\W_1^\bot\rightarrow\W_1^\bot$, defined by
\[
p_2(T)(x,y,z)=\eta(z)T(\f^2 x, \f^2 y, \xi),\quad T\in\W_1^\bot.
\]
We obtain immediately the truthfulness of the following
\begin{lemma}\label{lem-p2}
The operator $p_2$ has the following properties$:$\\[4pt]
    \begin{tabular}{rlrl}
    $($i$)$& $p_2\circ p_2 = p_2;\qquad\qquad\qquad$&
    $($ii$)$& $\langle p_2(T_1),T_2\rangle=\langle
    T_1,p_2(T_2)\rangle,\quad T_1, T_2 \in\W_1^\bot;$\\
    $($iii$)$& $p_2\circ (\lm a)=(\lm a)\circ p_2$.&&
\end{tabular}
\end{lemma}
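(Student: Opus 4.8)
The plan is to mirror the proof of \lemref{lem-p1}, since $p_2$ is again built by precomposing the three arguments of a tensor with $g$-self-adjoint, $\G\times\I$-equivariant endomorphisms of $V$. The first step I would record is the reformulation $p_2(T)(x,y,z)=T(\f^2 x,\f^2 y,vz)$, where $v z=\eta(z)\xi$ is the projector onto $\Span(\xi)$ introduced above; this holds because $T$ is linear in its third slot. I would also note at the outset that $p_2$ does land in $\W_1^\bot$: the factor $\eta(z)$ forces $p_2(T)(\f^2 x,\f^2 y,\f^2 z)=0$ thanks to $\eta\circ\f^2=0$, while skew-symmetry in the first two arguments is inherited directly from $T$.

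For idempotency (i), I would compute $p_2(p_2(T))(x,y,z)=\eta(z)\,\eta(\xi)\,T\bigl(\f^2(\f^2 x),\f^2(\f^2 y),\xi\bigr)$ and use the two structural identities $\eta(\xi)=1$ and $\f^2\circ\f^2=-\f^2$, the latter following from $\f^2 x=-x+\eta(x)\xi$ together with $\f^2\xi=0$. The two sign changes produced by $\f^2(\f^2 x)=-\f^2 x$ cancel by bilinearity, returning exactly $\eta(z)\,T(\f^2 x,\f^2 y,\xi)=p_2(T)(x,y,z)$.

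For self-adjointness (ii), the key observation is that both slot maps $\f^2$ and $v$ are $g$-self-adjoint: using $\eta(\cdot)=g(\cdot,\xi)$ one checks $g(\f^2 x,y)=-g(x,y)+\eta(x)\eta(y)=g(x,\f^2 y)$ and $g(vx,y)=\eta(x)\eta(y)=g(x,vy)$. I would then invoke the elementary fact that precomposing each argument of a $(0,3)$-tensor with a $g$-self-adjoint endomorphism produces a $\langle\cdot,\cdot\rangle$-self-adjoint operator on $\T$: in the contraction defining the inner product each inverse-metric factor $g^{iq}$ lets one transfer such an endomorphism from the $T_1$-slot to the corresponding $T_2$-slot. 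Applying this to the three maps $\f^2,\f^2,v$ gives $\langle p_2(T_1),T_2\rangle=\langle T_1,p_2(T_2)\rangle$.

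For equivariance (iii), I would use that every $a\in\G\times\I$ preserves the structure: it commutes with $\f$, hence with $\f^2$; it fixes $\xi$, so $a^{-1}\xi=\xi$; and since it preserves $g$ and fixes $\xi$, it preserves $\eta$, whence $\eta\circ a^{-1}=\eta$. Expanding both $(\lm a)\circ p_2$ and $p_2\circ(\lm a)$ on $(x,y,z)$ and pushing $a^{-1}$ through $\f^2$, $\xi$ and $\eta$ by these three identities yields the common expression $\eta(z)\,T(a^{-1}\f^2 x,a^{-1}\f^2 y,\xi)$ in both cases. The whole argument is routine; the only point requiring a moment's thought is the transfer-of-endomorphism step in (ii), which rests entirely on the $g$-self-adjointness of $\f^2$ and $v$.
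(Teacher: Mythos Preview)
Your argument is correct in all three parts; the rewriting $p_2(T)(x,y,z)=T(\f^2 x,\f^2 y,vz)$ together with the $g$-self-adjointness of $\f^2$ and $v$ and the $\G\times\I$-equivariance of $\f$, $\xi$, $\eta$ is exactly the routine verification one needs. The paper itself does not give a proof of this lemma (it is stated as immediate), so your write-up simply fills in the straightforward check the authors omit.
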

Then, bearing in mind  \lemref{lem-p2}, we obtain
\[
\begin{split}
\W_2=\im(p_2)=\left\{T\in\W_1^\bot\ \vert\ p_2(T)=T\right\},\quad%\\[4pt]
\W_2^\bot=\ker(p_2)=\left\{T\in\W_1^\bot\ \vert\ p_2(T)=0\right\}.
\end{split}
\]

Finally, we consider the operator $p_3:\
\W_2^\bot\rightarrow\W_2^\bot$ defined by
\[
p_3(T)(x,y,z)=\eta(x)T(\xi,\f^2 y, \f^2 z)+\eta(y)T(\f^2
x,\xi,\f^2 z),\quad T\in\W_2^\bot
\]
and we get the following
\begin{lemma}\label{lem-p3}
The operator $p_3$ has the following properties$:$\\[4pt]
    \begin{tabular}{rlrl}
    $($i$)$& $p_3\circ p_3 = p_3;\qquad \qquad \qquad$&
    $($ii$)$& $\langle p_3(T_1),T_2\rangle=\langle
    T_1,p_3(T_2)\rangle,\quad T_1, T_2 \in\W_2^\bot;$\\[4pt]
    $($iii$)$& $p_3\circ (\lm a)=(\lm a)\circ p_3$.&&
\end{tabular}
\end{lemma}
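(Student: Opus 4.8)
The plan is to verify the three properties exactly as for $p_1$ and $p_2$ in \lemref{lem-p1} and \lemref{lem-p2}, reducing everything to a handful of pointwise identities for the structure tensors. The identities I would collect first are $\f^2x=-x+\eta(x)\xi$, $\eta\circ\f^2=0$, $\f^2\xi=0$, $\eta(\xi)=1$, together with $\f^2\circ\f^2=-\f^2$, and the metric facts $g(\xi,\cdot)=\eta$ and $g(\f^2x,y)=g(x,\f^2y)$. Writing $h=-\f^2$ and $v=\eta\otimes\xi$ for the two projectors already introduced, the clean reformulation I would use throughout is
\[
p_3(T)(x,y,z)=T(vx,hy,hz)+T(hx,vy,hz),
\]
since $T(vx,hy,hz)=\eta(x)T(\xi,\f^2y,\f^2z)$ and $T(hx,vy,hz)=\eta(y)T(\f^2x,\xi,\f^2z)$ (the two sign changes coming from $h=-\f^2$ cancelling each time).

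For (i) I would first check that $p_3$ really maps $\W_2^\bot$ into itself, so that the composition is meaningful: applying $p_1$ or $p_2$ to $p_3(T)$ forces a factor $\eta(\f^2\cdot)=0$, hence both vanish. Then I would compute $p_3\bigl(p_3(T)\bigr)$ by direct substitution into the reformulation above. The mixed terms die because $\eta(\f^2\cdot)=0$ and $\f^2\xi=0$; the surviving terms carry $\eta(\xi)=1$ and a doubled $\f^2$, and $\f^2\circ\f^2=-\f^2$ collapses each doubled $\f^2$ to a single one, reproducing $p_3(T)$ and giving $p_3\circ p_3=p_3$.

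For (ii) I would again use the reformulation. Since $g(\xi,y)=\eta(y)$ yields $g(vx,y)=\eta(x)\eta(y)=g(x,vy)$ and $g(hx,y)=g(x,y)-\eta(x)\eta(y)=g(x,hy)$, both $h$ and $v$ are $g$-self-adjoint. The inner product $\langle\cdot,\cdot\rangle$ then permits a self-adjoint operator sitting in a fixed slot of $T_1$ to be moved to the same slot of $T_2$; transferring $v$ (resp.\ $h$) in each of the three slots, term by term, gives $\langle p_3(T_1),T_2\rangle=\langle T_1,p_3(T_2)\rangle$.

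For (iii) I would use that any $a\in\G\times\I$ fixes $\xi$, preserves $\eta$ (so $\eta\circ a^{-1}=\eta$), and commutes with $\f$, hence with $\f^2$. Substituting $a^{-1}x,a^{-1}y,a^{-1}z$ into $p_3(T)$ and pulling $a^{-1}$ out of $\xi$, $\eta$ and $\f^2$ shows $(\lm a)\bigl(p_3(T)\bigr)=p_3\bigl((\lm a)T\bigr)$. The only genuinely delicate step is (ii): one must confirm carefully that $h$ and $v$ are self-adjoint for $g$ and that the inner product transfers such operators slot by slot; (i) and (iii) are then routine manipulations with the structure identities.
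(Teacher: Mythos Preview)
Your argument is correct and is exactly the direct verification the paper has in mind; the paper gives no explicit proof of \lemref{lem-p3} (it is stated as immediate, just as \lemref{lem-p1} and \lemref{lem-p2}), and your reformulation $p_3(T)(x,y,z)=T(vx,hy,hz)+T(hx,vy,hz)$ together with the self-adjointness of $h$ and $v$ and the $\G\times\I$-invariance of $\f$, $\xi$, $\eta$ supplies precisely the missing details.
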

By virtue of \lemref{lem-p3}, we have
\[
\begin{split}
\W_3=\im(p_3)=\left\{T\in\W_2^\bot\ \vert\ p_3(T)=T\right\},\quad%\\[4pt]
\W_4=\ker(p_3)=\left\{T\in\W_2^\bot\ \vert\ p_3(T)=0\right\}.
\end{split}
\]

From \lemref{lem-p1}, \lemref{lem-p2} and \lemref{lem-p3} we have
immediately
\begin{theorem}\label{thm-W1234}
The decomposition $ \T=\W_1\oplus\W_2\oplus\W_3\oplus\W_4 $ is
orthogonal and invariant under the action of $\G\times\I$. The
subspaces $\W_i$ $(i=1,2,3,4)$ are determined by
\begin{equation}\label{W1234}
\begin{split}
\W_1:\quad &T(x,y,z)=-T(\f^2 x, \f^2 y, \f^2 z),\quad
\W_2:\quad T(x,y,z)=\eta(z)T(\f^2 x, \f^2 y, \xi),\\[4pt]
\W_3:\quad &T(x,y,z)=\eta(x)T(\xi,\f^2 y, \f^2 z)+\eta(y)T(\f^2 x,\xi,\f^2 z),\\[4pt]
\W_4:\quad &T(x,y,z)=-\eta(z)\left\{\eta(y)T(\f^2 x, \xi,\xi)+\eta(x)T(\xi,\f^2 y,\xi)\right\}\\[4pt]
\end{split}
\end{equation}
for arbitrary vectors $x, y,z \in V$.
\end{theorem}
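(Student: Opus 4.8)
The plan is to build the decomposition iteratively from the three projectors $p_1$, $p_2$, $p_3$, exploiting the fact that each has already been shown (in Lemmas~\ref{lem-p1}--\ref{lem-p3}) to be an idempotent, self-adjoint operator commuting with the representation $\lm$. First I would recall the general principle: whenever an operator $p$ on an inner-product space satisfies $p\circ p=p$ and $\langle p(T_1),T_2\rangle=\langle T_1,p(T_2)\rangle$, the space splits orthogonally as $\im(p)\oplus\ker(p)$, and if moreover $p\circ(\lm a)=(\lm a)\circ p$ for every $a\in\G\times\I$, then both summands are invariant under the action of $\G\times\I$. I would apply this three times in succession. From \lemref{lem-p1} we get the orthogonal, invariant split $\T=\W_1\oplus\W_1^\bot$; restricting $p_2$ to $\W_1^\bot$ and invoking \lemref{lem-p2} gives $\W_1^\bot=\W_2\oplus\W_2^\bot$; finally restricting $p_3$ to $\W_2^\bot$ and using \lemref{lem-p3} yields $\W_2^\bot=\W_3\oplus\W_4$. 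Chaining these, $\T=\W_1\oplus\W_2\oplus\W_3\oplus\W_4$ is orthogonal and invariant, which is the first assertion.

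It remains to verify that the defining conditions for $\W_1$, $\W_2$, $\W_3$ are exactly the displayed equations in \eqref{W1234}, and to derive the less obvious description of $\W_4$. The characterizations of $\W_1$, $\W_2$, $\W_3$ as the images of $p_1$, $p_2$, $p_3$ are immediate: $T\in\W_1$ iff $p_1(T)=T$, i.e. $T(x,y,z)=-T(\f^2x,\f^2y,\f^2z)$, and similarly for the other two by reading off the formulas defining $p_2$ and $p_3$. The genuine work is the formula for $\W_4$. Here I would argue that $T\in\W_4$ means simultaneously $T\in\W_1^\bot$ (so $p_1(T)=0$), $T\in\W_2^\bot$ (so $p_2(T)=0$), and $T\in\ker(p_3)$ (so $p_3(T)=0$), and translate these three vanishing conditions into constraints on the values of $T$ on the four possible ``types'' of argument triples obtained by decomposing each of $x,y,z$ into its horizontal part $-\f^2(\cdot)$ and vertical part $\eta(\cdot)\xi$.

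The main obstacle, and the step I would dwell on, is precisely this bookkeeping for $\W_4$. Using the decomposition $x=-\f^2x+\eta(x)\xi$ in each slot expands $T(x,y,z)$ into eight terms classified by how many arguments are vertical. The condition $p_1(T)=0$ kills the all-horizontal term $T(\f^2x,\f^2y,\f^2z)$; the condition $p_2(T)=0$ kills the terms with exactly the third argument vertical and the first two horizontal, i.e. $T(\f^2x,\f^2y,\xi)\eta(z)$; the condition $p_3(T)=0$ kills the terms with exactly the first or second argument vertical and the other two horizontal. Because $T$ is skew-symmetric in its first two arguments, $T(\xi,\xi,z)=0$, so the term with both first arguments vertical drops out automatically, and the surviving contributions are exactly those in which the third argument is vertical together with exactly one of the first two, giving $-\eta(z)\{\eta(y)T(\f^2x,\xi,\xi)+\eta(x)T(\xi,\f^2y,\xi)\}$ after repositioning the $\xi$'s via $\eta(\cdot)\xi$. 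The sign and the pattern of the surviving term require careful tracking of the substitutions $\f^2x=-x+\eta(x)\xi$, but once the eight-term expansion is organized by vertical-argument count, each of the three kernel conditions eliminates a disjoint block and the claimed expression for $\W_4$ falls out. I would then note that this confirms $\W_4$ is genuinely the orthogonal complement of $\W_1\oplus\W_2\oplus\W_3$, completing the proof.
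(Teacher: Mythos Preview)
Your approach is correct and matches the paper's: the paper simply states that \thmref{thm-W1234} follows ``immediately'' from Lemmas~\ref{lem-p1}--\ref{lem-p3}, and your proposal spells out exactly that argument---the standard image/kernel splitting for self-adjoint idempotents commuting with the group action, applied successively to $p_1,p_2,p_3$. Your additional explicit derivation of the $\W_4$ formula via the eight-term expansion along $x=-\f^2x+\eta(x)\xi$ is a sound elaboration that the paper leaves entirely to the reader.
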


\begin{corollary}\label{cor-W1234}
The subspaces $\W_i$ $(i=1,2,3,4)$ are characterized as follows$:$
\[
\begin{split}
&\W_1=\left\{T\in\T\ \vert\ T(v(x),y,z)=T(x,y,v(z))=0\right\},\\[4pt]
&\W_2=\left\{T\in\T\ \vert\ T(v(x),y,z)=T(x,y,h(z))=0\right\},\\[4pt]
&\W_3=\left\{T\in\T\ \vert\ T(x,y,v(z))=T(h(x),h(y),z)=0\right\},\\[4pt]
&\W_4=\left\{T\in\T\ \vert\ T(x,y,h(z))=T(h(x),h(y),z)=0\right\},\\[4pt]
\end{split}
\]
where $x, y,z \in V$.
\end{corollary}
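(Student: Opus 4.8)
The plan is to verify the four characterizations one at a time, in each case showing that the condition from \thmref{thm-W1234} is equivalent to the two vanishing conditions stated in the corollary. The bridge between the two formulations is the identity $\f^2 x=-h(x)$, together with its companions $v(x)=\eta(x)\xi$, $h(\xi)=-\f^2\xi=0$, $h(v(x))=0$ (both consequences of $\f\xi=0$), $\eta(h(z))=-\eta(\f^2 z)=0$ (a consequence of $\eta\circ\f=0$), and $\eta(v(z))=\eta(z)$. Throughout I will also use the skew-symmetry $T(x,y,z)=-T(y,x,z)$, which turns any statement about the first argument into the matching statement about the second. No deep idea is required: the whole corollary is a dictionary translation, and the only thing to watch is the sign produced by each replacement $h=-\f^2$.

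First I would treat $\W_1$ in full as the template. The two conditions $T(v(x),y,z)=0$ and $T(x,y,v(z))=0$, together with skew-symmetry, say that $T$ annihilates any argument lying in $v(V)$. Expanding $T(x,y,z)$ multilinearly along $x=h(x)+v(x)$ in each of the three slots then kills every term containing a vertical factor and leaves only $T(h(x),h(y),h(z))$, which equals $-T(\f^2 x,\f^2 y,\f^2 z)$ once the three sign factors are pulled out; this is precisely the $\W_1$-line of \eqref{W1234}. For the converse, substituting a vertical vector into that formula yields $0$ because $h(v(x))=0$, and substituting $h(z)$ into the third slot yields $0$ because $\eta(h(z))=0$. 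The same scheme disposes of $\W_2$, $\W_3$ and $\W_4$: in each case one stated condition constrains the third slot — forcing it horizontal for $\W_1$, $\W_3$ and vertical for $\W_2$, $\W_4$ via $\eta(h(z))=0$, $\eta(v(z))=\eta(z)$ — while the other constrains the first pair, either forcing both horizontal (for $\W_1$, $\W_2$) or annihilating their purely horizontal component (for $\W_3$, $\W_4$).

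The step that needs the most care is the converse direction for $\W_3$ and $\W_4$, where the single condition $T(h(x),h(y),z)=0$ has to be unpacked into the explicit two-term formula. Here I would expand $T(x,y,z)$ along $x=h(x)+v(x)$ and $y=h(y)+v(y)$: the fully horizontal term vanishes by hypothesis, the fully vertical term $\eta(x)\eta(y)T(\xi,\xi,z)$ vanishes by skew-symmetry, and the two surviving mixed terms collect into $\eta(y)T(h(x),\xi,z)+\eta(x)T(\xi,h(y),z)$. Feeding in the remaining slot condition — third slot horizontal for $\W_3$, vertical for $\W_4$ — and then replacing each $h$ by $-\f^2$ produces exactly the $\W_3$- and $\W_4$-lines of \eqref{W1234}, the two sign changes in the $\W_4$ case combining into the single overall minus sign displayed there. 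I expect no genuine obstacle beyond this bookkeeping of the signs introduced at each $h=-\f^2$ substitution.
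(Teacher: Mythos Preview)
Your argument is correct and is exactly the routine verification the paper has in mind; the paper itself states the corollary without proof, regarding it as an immediate consequence of \thmref{thm-W1234} together with the definitions of $h$ and $v$. Two cosmetic points worth cleaning up: in your $\W_1$ converse the clause ``substituting $h(z)$ into the third slot yields $0$ because $\eta(h(z))=0$'' is out of place, since the $\W_1$-line of \eqref{W1234} contains no $\eta$ and both required vanishings already follow from your first observation $h(v(\cdot))=0$; and in the $\W_4$ step each surviving mixed term carries only one $h$ (the third slot has become $\xi$), so it is one sign flip per term---not two---that produces the overall minus sign.
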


The torsion forms associated with $T\in\T$ are defined as follows:
\begin{equation}\label{t}
\begin{array}{c}
t(x)=g^{ij}T(x,e_i,e_j),\qquad t^*(x)=g^{ij}T(x,e_i,\f e_j),\qquad
\hat{t}(x)=T(x,\xi,\xi)
\end{array}
\end{equation}
regarding the basis $\left\{e_i;\xi\right\}$ $(i=1,2,\dots,2n)$ of
$V$. Obviously, $\hat{t}(\xi)=0$ is always valid.

According to \corref{cor-W1234}, \eqref{W1234} and \eqref{t} we
obtain the following
\begin{corollary}\label{cor-t-W1}
The torsion forms of $T$ have the following properties in each of
the
sub\-spaces $\W_{i}$ $(i=1,2,3,4)$$:$\\[4pt]
\begin{tabular}{rlrl}%\renewcommand{\labelenumi}{(\roman{enumi})}
    $($i$)$& If $T\in\W_{1}$, then $t\circ v=t^*\circ
    v=\hat{t}=0;$ \qquad &
    $($ii$)$& If $T\in\W_{2}$, then $t=t^*=\hat{t}=0;$\\[4pt]
    $($iii$)$& If $T\in\W_{3}$, then $t\circ h=t^*\circ
    h=\hat{t}=0;$ \qquad &
    $($iv$)$& If $T\in\W_{4}$, then $t=t^*=0$.
\end{tabular}
\end{corollary}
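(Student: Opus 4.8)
The plan is to read off each claim directly from the slot-by-slot characterizations of the subspaces given in \corref{cor-W1234}, since the three torsion forms in \eqref{t} are nothing but contractions that place either horizontal or vertical vectors into specified arguments of $T$. First I would fix the convention that in the adapted basis $\{e_i;\xi\}$ the vectors $e_i$ $(i=1,\dots,2n)$ span the horizontal distribution $H=\ker\eta$, so that $h(e_i)=e_i$, $v(e_i)=0$ and, because $\eta\circ\f=0$, also $\f e_i\in H$. Consequently the contracted slots of $t(x)=g^{ij}T(x,e_i,e_j)$ and $t^*(x)=g^{ij}T(x,e_i,\f e_j)$ only ever receive horizontal vectors, whereas $\hat t(x)=T(x,\xi,\xi)$ inserts the vertical vector $\xi=v(\xi)$ into the last two arguments. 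I would also record the elementary consequence of skew-symmetry in the first two arguments, namely $T(x,v(y),z)=-T(v(y),x,z)$, which lets me move a projector between the first and second slots when needed.

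With these observations the four cases become routine. For $T\in\W_1$ the conditions $T(v(x),y,z)=T(x,y,v(z))=0$ together with skew-symmetry show that $T$ annihilates every argument that is vertical; hence $\hat t(x)=T(x,\xi,\xi)=0$ (vertical second slot), while $(t\circ v)(x)=\eta(x)\,g^{ij}T(\xi,e_i,e_j)$ and $(t^*\circ v)(x)=\eta(x)\,g^{ij}T(\xi,e_i,\f e_j)$ vanish through the vertical first slot. For $T\in\W_2$ the third slot of both $t$ and $t^*$ is horizontal ($e_j$, respectively $\f e_j$), so $T(x,y,h(z))=0$ forces $t=t^*=0$, and $\hat t=0$ again by the vertical second slot.

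For $T\in\W_3$ I would use the second defining relation $T(h(x),h(y),z)=0$: in $(t\circ h)(x)=g^{ij}T(h(x),e_i,e_j)$ and its starred analogue both the first slot $h(x)$ and the summation slot $e_i$ are horizontal, so the contractions vanish, while $\hat t=0$ follows from $T(x,y,v(z))=0$ applied to the vertical third slot. For $T\in\W_4$ the horizontality of the third slot again yields $t=t^*=0$ from $T(x,y,h(z))=0$; here $\hat t$ is intentionally absent from the statement, since in $\hat t(x)=T(x,\xi,\xi)$ the first two slots are $x$ and $\xi$ (not both horizontal) and the third is vertical (not horizontal), so neither $\W_4$-condition applies and $\hat t$ need not vanish.

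I do not expect a genuine obstacle here: the argument is pure bookkeeping on top of \corref{cor-W1234}. The only points demanding care are the standing convention that the summation vectors $e_i$ lie in $H$ (so that the traces $t,t^*$ never see a vertical contracted argument) and the correct pairing of each form's slot pattern with the matching vanishing condition, occasionally invoking skew-symmetry to relocate a projector between the first two slots.
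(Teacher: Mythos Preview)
Your argument is correct and follows the paper's own approach: the paper does not give a detailed proof but simply cites \corref{cor-W1234}, \eqref{W1234} and \eqref{t}, i.e.\ exactly the slot-by-slot characterizations you unpack. Your explicit remark that the summation indices in $t$ and $t^*$ range over the horizontal basis $e_1,\dots,e_{2n}$ (so that the contracted arguments are horizontal, while $\hat t$ is separated out) is the one point the paper leaves implicit, and it is indeed needed---otherwise, with a full $(2n+1)$-trace, the claim $t=0$ in $\W_4$ would fail by the $\hat t$ contribution.
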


Further we continue the decomposition of the subspaces $\W_i$
$(i=1,2,3,4)$ of $\T$.

\subsection{The subspace $\W_1$}
Since the endomorphism $\f$ induces an almost complex structure on
$H=\ker(\eta)$ (which is the orthogonal complement $\{\xi\}^\bot$
of the subspace $\Span(\xi)$) and the restriction of $g$ on $H$ is
a Norden metric (because the almost complex structure causes an
anti-isometry on $H$), then the decomposition of $\W_1$ is made as
the decomposition of the space of the torsion tensors on an almost
complex manifold with Norden metric known from \cite{GaMi87}.

Let us consider the linear operator $L_{1,0}:\ \W_1\rightarrow
\W_1$ defined by
\[
L_{1,0}(T)(x,y,z)=-T(\f x,\f y,\f^2 z).
\]
Then, it follows immediately
\begin{lemma}\label{lem-L10}
The operator $L_{1,0}$ is an involutive isometry on $\W_1$ and it
is invariant with respect to the group $\G\times\I$, \ie
\[
\begin{array}{c}
L_{1,0}\circ L_{1,0}=\Id_{\W_1}, \quad \langle
L_{1,0}(T_1),L_{1,0}(T_2)\rangle=\langle T_1,T_2 \rangle, \quad%\\[4pt]
L_{1,0}((\lm a)T)=(\lm a)(L_{1,0}(T)),
\end{array}
\]
where $T_1,T_2\in\W_1$, $a\in \G\times\I$.
\end{lemma}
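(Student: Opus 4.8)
The plan is to verify the three asserted properties of $L_{1,0}$ directly from its definition $L_{1,0}(T)(x,y,z)=-T(\f x,\f y,\f^2 z)$, the defining relation $\f^2=-\Id+\eta\otimes\xi$, and the characterization of $\W_1$ from \thmref{thm-W1234}, namely $T(x,y,z)=-T(\f^2 x,\f^2 y,\f^2 z)$.

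First I would check that $L_{1,0}$ actually maps $\W_1$ into $\W_1$; this is needed before any of the three claims make sense. The key observation is that on the contact distribution $H=\ker(\eta)$ the operator $\f$ restricts to an almost complex structure with $\f^2=-\Id$, while on $\Span(\xi)$ we have $\f\xi=0$. Since $T\in\W_1$ vanishes whenever any slot is $\xi$ (by \corref{cor-W1234}, $T(v(x),y,z)=T(x,y,v(z))=0$), the $\eta\otimes\xi$ correction terms in $\f^2$ never contribute, so effectively $\f^2=-\Id$ in every computation involving $L_{1,0}$ on $\W_1$.

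For the involution property I would compute the composite: applying $L_{1,0}$ twice gives $L_{1,0}(L_{1,0}(T))(x,y,z)=-L_{1,0}(T)(\f x,\f y,\f^2 z)=T(\f^2 x,\f^2 y,\f^4 z)$. Using $\f^2=-\Id$ on $H$ (the $\xi$-components drop out as noted) yields $\f^2 x=-x$, $\f^2 y=-y$, and $\f^4 z=z$, hence $T(\f^2 x,\f^2 y,\f^4 z)=T(-x,-y,z)=T(x,y,z)$, giving $L_{1,0}\circ L_{1,0}=\Id_{\W_1}$. For the isometry property, the cleanest route is to use that $\f$ acts as an anti-isometry on $H$ (so that $g(\f x,\f y)=-g(x,y)$ there); expanding $\langle L_{1,0}(T_1),L_{1,0}(T_2)\rangle$ in a $\f$-adapted basis, each of the three contracted indices contributes a factor of $-1$ from the Norden metric, and the three signs combined with the explicit minus sign in the definition reorganize into $\langle T_1,T_2\rangle$. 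Alternatively, since we will have shown $L_{1,0}$ is involutive, and since one can verify it is self-adjoint, being an involutive self-adjoint operator it is automatically an isometry; I would likely present whichever is shorter. The equivariance $L_{1,0}((\lm a)T)=(\lm a)(L_{1,0}(T))$ is the most routine: it follows because $a\in\G\times\I$ commutes with $\f$ (the structure group preserves the almost contact structure), so $a^{-1}\f=\f a^{-1}$, and substituting the definition of $\lm$ lets the $\f$'s slide past the $a^{-1}$'s freely.

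The main obstacle, though minor, will be bookkeeping the $\xi$-direction correction in $\f^2=-\Id+\eta\otimes\xi$: one must be careful that the extra $\eta(\cdot)\xi$ terms genuinely vanish against $T\in\W_1$ at every stage rather than silently assuming $\f^2=-\Id$ globally. Once it is observed that membership in $\W_1$ forces all $\xi$-slot evaluations to be zero, these corrections disappear uniformly and all three verifications reduce to short sign computations.
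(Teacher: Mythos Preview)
Your proposal is correct and follows the same route as the paper, which simply asserts that the lemma ``follows immediately'' and gives no explicit argument; your direct verification from the definition of $L_{1,0}$, the relation $\f^2=-\Id+\eta\otimes\xi$, and the vanishing of $T\in\W_1$ on $\xi$-slots is exactly what is implicitly intended. One small caution on the isometry sketch: the third slot carries $\f^2$ rather than $\f$, and the inner product involves two copies of $L_{1,0}$ and hence two explicit minus signs, so your stated sign count (``three signs combined with the explicit minus sign'') is off even though the conclusion is right---when you write it out, track the first two slots via the Norden property $g(\f\cdot,\f\cdot)=-g(\cdot,\cdot)$ and the third via $\f^2|_H=-\Id$ separately.
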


Therefore, $L_{1,0}$ has two eigenvalues $+1$ and $-1$, and the
corresponding eigen\-spaces
\[
\W_1^+=\left\{T\in \W_1\ \vert\ L_{1,0}(T)= T\right\},\qquad
\W_1^-=\left\{T\in \W_1\ \vert\ L_{1,0}(T)=- T\right\}
\]
are invariant orthogonal subspaces of $\W_1$.

In order to decompose $\W_1^-$, we consider the linear operator
$L_{1,1}:\ \W_1^-\rightarrow \W_1^-$ defin\-ed by
\[
L_{1,1}(T)(x,y,z)=-T(\f x,\f^2 y,\f z).
\]
Let us denote the eigenspaces $\W_{1,1/1,2}=\left\{T\in \W_1^-\
\vert\ L_{1,1}(T)=\mp T\right\}$.
We have
\begin{lemma}\label{lem-L11}
The operator $L_{1,1}$ is an involutive isometry on $\W_1$ and it
is invariant with respect to $\G\times\I$.
\end{lemma}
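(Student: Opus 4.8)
The plan is to verify the three properties asserted in \lemref{lem-L11} in the same manner as was done for $L_{1,0}$ in \lemref{lem-L10}, namely: (a) $L_{1,1}\circ L_{1,1}=\Id_{\W_1^-}$, (b) $\langle L_{1,1}(T_1),L_{1,1}(T_2)\rangle=\langle T_1,T_2\rangle$ for $T_1,T_2\in\W_1^-$, and (c) $L_{1,1}\circ(\lm a)=(\lm a)\circ L_{1,1}$ for all $a\in\G\times\I$. First I would check that $L_{1,1}$ actually maps $\W_1^-$ into itself, which is needed before any of the three statements make sense. For this one applies the definition of $\W_1$ from \eqref{W1234}, the eigenvalue condition $L_{1,0}(T)=-T$, and the algebraic relations $\f^2=-\Id+\eta\otimes\xi$, $\f\xi=0$, $\eta\circ\f=0$; a short computation using $\f^2\f^2=\f^2$ on $H$ should confirm both that $p_1(L_{1,1}(T))=L_{1,1}(T)$ and that $L_{1,0}(L_{1,1}(T))=-L_{1,1}(T)$.

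For involutivity (a), I would compute $L_{1,1}\bigl(L_{1,1}(T)\bigr)(x,y,z)=-\bigl(L_{1,1}(T)\bigr)(\f x,\f^2 y,\f z)$ and expand using the definition again, obtaining an expression of the form $T(\f^2 x,\f^4 y,\f^2 z)$ up to sign. Since $\f^4=-\f^2+\eta\circ\f\,(\cdot)\otimes\xi$ and the $\eta$-terms are annihilated because $T\in\W_1$ vanishes on $v(V)=\Span(\xi)$ in every slot, this reduces to $-T(\f^2 x,-\f^2 y,\f^2 z)=T(\f^2 x,\f^2 y,\f^2 z)=-p_1(T)(x,y,z)=-T(x,y,z)$; tracking the overall sign carefully should yield $T$, so that $L_{1,1}^2=\Id$. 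The isometry claim (b) follows from (a) together with the general fact that $L_{1,1}$ is self-adjoint, which I would establish by the same kind of index manipulation used implicitly for $p_1$ in \lemref{lem-p1}(ii): rewriting $\langle L_{1,1}(T_1),T_2\rangle$ by inserting $\f$ into the metric contractions and using $g(\f u,w)=g(u,\f w)$ to shift $\f$ across, showing $\langle L_{1,1}(T_1),T_2\rangle=\langle T_1,L_{1,1}(T_2)\rangle$; combining self-adjointness with $L_{1,1}^2=\Id$ gives the isometry.

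Finally, the equivariance (c) is the most routine of the three: since $\f$ commutes with every $a\in\G\times\I$ (this is exactly the defining property of the structure group, reflected in the block form of its matrices which commute with $\f$), one has $a^{-1}(\f u)=\f(a^{-1}u)$, and substituting $a^{-1}x,a^{-1}y,a^{-1}z$ into the definition of $L_{1,1}$ immediately interchanges with the $\lm$-action. I expect the main obstacle to be purely bookkeeping rather than conceptual: the careful handling of the sign and of the $\eta\otimes\xi$ correction terms arising from repeated application of $\f^2=-\Id+\eta\otimes\xi$, ensuring at each stage that the $\xi$-components drop out because tensors in $\W_1$ are horizontal in all three arguments (as recorded in \corref{cor-W1234}). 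Given \lemref{lem-L10} as a template, the proof should be short, and indeed the phrase ``it follows immediately'' suggests the authors intend to invoke the parallel computation.
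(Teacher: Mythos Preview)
Your proposal is correct and matches the paper's approach: the paper gives no explicit proof of \lemref{lem-L11} (it is introduced by ``We have'', paralleling the ``it follows immediately'' before \lemref{lem-L10}), so the intended argument is precisely the direct verification of involutivity, isometry, and $\G\times\I$-equivariance that you outline. Your sign bookkeeping in part~(a) has a slip (the intermediate expression should be $T(\f^2 x,\f^4 y,\f^2 z)=-T(\f^2 x,\f^2 y,\f^2 z)=T(x,y,z)$ without the extra leading minus), but you already flag this and the conclusion $L_{1,1}^2=\Id$ is correct.
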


According to the latter lemma, the eigenspaces $\W_{1,1}$ and
$\W_{1,2}$ are invariant and orthogonal.

To decompose $\W_1^+$, we define the linear operator $L_{1,2}:\
\W_1^+\rightarrow \W_1^+$ as follows:
\[
%\begin{split}
\begin{array}{l}
L_{1,2}(T)(x,y,z)=-\frac{1}{2}\left\{T(\f^2 z,\f^2 x,\f^2 y)%\right.\\[4pt]
+T(\f^2 z,\f x,\f y)\right\}_{[x\leftrightarrow y]}.
%\end{split}
\end{array}
\]
%
%We obtain
\begin{lemma}\label{lem-L12}
The operator $L_{1,2}$ is an involutive isometry on $\W_1^+$ and
it is invariant with respect to $\G\times\I$.
\end{lemma}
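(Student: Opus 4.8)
The plan is to follow the pattern of \lemref{lem-L10} and \lemref{lem-L11}, exploiting that every $T\in\W_1$ is supported on the contact distribution $H=\ker(\eta)$: by \corref{cor-W1234} it vanishes as soon as one of its arguments is collinear with $\xi$, while on $H$ one has $\f^2=-\Id$ and $g$ restricts to a Norden metric for which $\f$ is an anti-isometry. Hence I may compute with horizontal arguments only, replacing $\f^2 w$ by $-w$ throughout; in particular the defining formula simplifies to
\[
L_{1,2}(T)(x,y,z)=\tfrac12\bigl\{T(z,x,y)+T(z,\f x,\f y)\bigr\}_{[x\leftrightarrow y]},\qquad T\in\W_1 .
\]
Two algebraic relations will drive everything: the skew-symmetry $T(x,y,z)=-T(y,x,z)$ valid on all of $\T$, and the relation $T(\f x,\f y,z)=T(x,y,z)$ characterizing $\W_1^+$ (the reduced form of $L_{1,0}(T)=T$), which upon replacing $y$ by $-\f y$ yields $T(\f x,y,z)=-T(x,\f y,z)$ and, combined with skew-symmetry, the further identity $T(x,\f y,\f z)=T(y,\f x,\f z)$.

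First I would check that $L_{1,2}$ is well defined, i.e. that it maps $\W_1^+$ into itself. Skew-symmetry of $L_{1,2}(T)$ in its first two slots is built into the antisymmetrization $\{\cdot\}_{[x\leftrightarrow y]}$. Setting $x=\xi$ (or $z=\xi$) annihilates every term because $\f\xi=\f^2\xi=0$, so $L_{1,2}(T)\in\W_1$. Finally, substituting $\f x,\f y$ for $x,y$ in the reduced formula and using $\f^2=-\Id$ shows $L_{1,2}(T)(\f x,\f y,z)=L_{1,2}(T)(x,y,z)$, that is $L_{1,0}\bigl(L_{1,2}(T)\bigr)=L_{1,2}(T)$, whence $L_{1,2}(T)\in\W_1^+$.

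The heart of the lemma is involutivity. Writing $U=L_{1,2}(T)$ and substituting it into the reduced formula produces, after expanding $U(z,x,y)$, $U(z,\f x,\f y)$ and their $x\leftrightarrow y$ counterparts, sixteen terms carrying the common prefactor $\tfrac14$. The eight terms in which $\f$ does not touch the third argument (four of them first simplified through $T(\f x,\f y,z)=T(x,y,z)$) collapse, by skew-symmetry, to $4\,T(x,y,z)$; the remaining eight, in which $\f$ acts on the third argument, are first brought to a common shape by $T(\f x,y,z)=-T(x,\f y,z)$ and then reduce to $-2T(y,\f x,\f z)+2T(x,\f y,\f z)$, which vanishes on account of $T(x,\f y,\f z)=T(y,\f x,\f z)$. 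Thus the sixteen terms sum to $4\,T(x,y,z)$ and $L_{1,2}\bigl(L_{1,2}(T)\bigr)=T$. This term-by-term bookkeeping is the only genuinely laborious step and the main obstacle; it is precisely where the $\W_1^+$-specific identities are indispensable.

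It remains to prove the isometry and the invariance. For the isometry I would evaluate $\langle L_{1,2}(T_1),L_{1,2}(T_2)\rangle$ by expanding the contraction in a basis of $H$ and transporting each inserted $\f$ from one factor to the other by means of the B-metric symmetry $g(\f\cdot,\cdot)=g(\cdot,\f\cdot)$, together with a relabelling of the summation indices; this reproduces $\langle T_1,T_2\rangle$ exactly as in the Norden setting of \cite{GaMi87} and is, in view of involutivity, equivalent to the self-adjointness of $L_{1,2}$. Invariance under $\G\times\I$ is immediate: every $a\in\G\times\I$ commutes with $\f$ (hence with $\f^2$) and fixes $\xi$ and $\eta$, so it commutes with each building block out of which $L_{1,2}$ is assembled, giving $L_{1,2}((\lm a)T)=(\lm a)\bigl(L_{1,2}(T)\bigr)$ just as in \lemref{lem-L10}. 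Consequently $L_{1,2}$ has eigenvalues $\pm1$, whose eigenspaces furnish a further orthogonal and $\G\times\I$-invariant splitting of $\W_1^+$.
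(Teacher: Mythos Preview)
Your proposal is correct. The paper states \lemref{lem-L12} without proof, treating it as a routine verification in the spirit of \lemref{lem-L10} and \lemref{lem-L11}; your outline supplies exactly that verification, and the detailed involutivity computation---the sixteen-term bookkeeping using $T(\f x,\f y,z)=T(x,y,z)$, $T(\f x,y,z)=-T(x,\f y,z)$, and $T(x,\f y,\f z)=T(y,\f x,\f z)$ on $\W_1^+$---checks out precisely as you describe. The only part left at sketch level is the isometry, but your observation that, given involutivity, it is equivalent to self-adjointness, together with the self-adjointness of $\f$ with respect to $g$ and the reference to the Norden-metric argument of \cite{GaMi87} (which is exactly the situation on $H=\ker(\eta)$), is adequate at the level of rigor the paper itself adopts.
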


Thus, the eigenspaces $\W_{1,3/1,4}=\left\{T\in \W_1^+\ \vert\
L_{1,2}(T)=\pm T\right\}$ are invariant and orthogonal.

Using \lemref{lem-L10}, \lemref{lem-L11} and \lemref{lem-L12}, we
get the following
\begin{theorem}\label{thm-T1k}
The decomposition $
\W_1=\W_{1,1}\oplus\W_{1,2}\oplus\W_{1,3}\oplus\W_{1,4} $ is
orthogonal and invariant with respect to the structure group.
\end{theorem}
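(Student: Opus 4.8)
The plan is to reduce everything to a single observation about involutive isometries and then apply it three times. By Lemmas~\ref{lem-L10}, \ref{lem-L11} and \ref{lem-L12}, each of $L_{1,0}$, $L_{1,1}$, $L_{1,2}$ is an involutive isometry on its respective domain. First I would record that an involutive isometry is automatically self-adjoint: if $L\circ L=\Id$ and $\langle L(T_1),L(T_2)\rangle=\langle T_1,T_2\rangle$, then $\langle L(T_1),T_2\rangle=\langle L(T_1),L(L(T_2))\rangle=\langle T_1,L(T_2)\rangle$. Consequently $L$ has only the eigenvalues $+1$ and $-1$, its two eigenspaces are mutually orthogonal, and they span the whole domain; that is, the domain splits as the orthogonal direct sum of the $(+1)$- and $(-1)$-eigenspaces.

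Applying this to $L_{1,0}$ gives the orthogonal decomposition $\W_1=\W_1^+\oplus\W_1^-$. Applying it to $L_{1,1}$ on $\W_1^-$ gives $\W_1^-=\W_{1,1}\oplus\W_{1,2}$, and applying it to $L_{1,2}$ on $\W_1^+$ gives $\W_1^+=\W_{1,3}\oplus\W_{1,4}$. Substituting the last two into the first yields $\W_1=\W_{1,1}\oplus\W_{1,2}\oplus\W_{1,3}\oplus\W_{1,4}$, which is the asserted decomposition.

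It remains to verify full orthogonality of the four summands and their invariance under $\G\times\I$. Within each branch the two eigenspaces are orthogonal by the self-adjointness argument above, while $\W_{1,1},\W_{1,2}\subseteq\W_1^-$ and $\W_{1,3},\W_{1,4}\subseteq\W_1^+$ are orthogonal to one another simply because $\W_1^-\perp\W_1^+$. For invariance, each of the three operators commutes with the representation $\lm a$ (the third property asserted in each lemma), so $\lm a$ maps every eigenspace into itself; hence all four subspaces are invariant under the action of $\G\times\I$.

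The argument is purely formal once the three lemmas are available, so I do not expect a genuine obstacle. The only point requiring a moment of care is the cross-branch orthogonality: one should note that, for instance, $\W_{1,1}\perp\W_{1,3}$ is \emph{inherited} from $\W_1^-\perp\W_1^+$ rather than being something to check directly between eigenspaces of two different operators.
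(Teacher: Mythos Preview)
Your argument is correct and follows the same route as the paper: it too derives the decomposition directly from Lemmas~\ref{lem-L10}, \ref{lem-L11}, \ref{lem-L12} by passing to the $\pm 1$-eigenspaces of each involutive isometry. Your version is simply more explicit, spelling out the self-adjointness of an involutive isometry and the inherited cross-branch orthogonality $\W_1^-\perp\W_1^+$, whereas the paper records these consequences immediately after each lemma and then states the theorem as a summary.
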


Bearing in mind the definition of the subspaces $\W_{1,i}$
$(i=1,2,3,4)$, we obtain
\begin{proposition}\label{prop-T1k}
The subspaces $\W_{1,i}$ $(i=1,2,3,4)$ of $\W_1$ are determined
by$:$
\[
\begin{split}
\W_{1,1}:\quad
    &T(\xi,y,z)=T(x,y,\xi)=0,\quad %\\[4pt]
    T(x,y,z)=-T(\f x,\f y,z)=-T(x,\f y,\f z);\\[4pt]
%\Leftrightarrow\quad
    %&T(\xi,y,z)=T(x,y,\xi)=0,\quad %\\[4pt]
    %T(\f x,y,z)=T(x,\f y,z)=T(x,y,\f z); \\[4pt]
%
\W_{1,2}:\quad &T(\xi,y,z)=T(x,y,\xi)=0,\quad %\\[4pt]
            T(x,y,z)=-T(\f x,\f y,z)=T(\f x,y,\f z);\\[4pt]
\W_{1,3}:\quad &T(\xi,y,z)=T(x,y,\xi)=0,\quad %\\[4pt]
            T(x,y,z)-T(\f x,\f y,z)=\sx  T(x,y,z)=0;\\[4pt]
\W_{1,4}:\quad &T(\xi,y,z)=T(x,y,\xi)=0,\quad %\\[4pt]
            T(x,y,z)-T(\f x,\f y,z)=\sx T(\f x,y,z)=0.
\end{split}
\]
\end{proposition}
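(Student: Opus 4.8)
The plan is to unwind the eigenspace definitions: reduce each operator $L_{1,0}$, $L_{1,1}$, $L_{1,2}$ to a transparent form on its domain and then read off the eigenvalue equations as the stated tensor identities. The observation I would use throughout is that every $T\in\W_1$ vanishes as soon as one argument is vertical. Indeed, by \corref{cor-W1234} we have $T(v(x),y,z)=T(x,y,v(z))=0$, and the skew-symmetry in the first two slots adds $T(x,v(y),z)=0$; in particular $T(\xi,y,z)=T(x,y,\xi)=0$, which is exactly the common first line in all four cases. A consequence I would exploit repeatedly is that whenever $\f^2 w=-w+\eta(w)\xi$ is inserted into any slot of such a $T$, the term carrying $\eta(w)\xi$ dies, so $\f^2$ may be replaced by $-\Id$ in every argument.

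With this, I would first treat $L_{1,0}$ and $L_{1,1}$. On $\W_1$ one gets $L_{1,0}(T)(x,y,z)=-T(\f x,\f y,\f^2 z)=T(\f x,\f y,z)$, so $\W_1^{\pm}$ consist precisely of the $T$ with $T(x,y,z)=\pm T(\f x,\f y,z)$; this already gives the shared relation $T(x,y,z)=-T(\f x,\f y,z)$ for $\W_{1,1},\W_{1,2}$ and the relation $T(x,y,z)-T(\f x,\f y,z)=0$ for $\W_{1,3},\W_{1,4}$. Next, the same reduction yields $L_{1,1}(T)(x,y,z)=T(\f x,y,\f z)$ on $\W_1^-$, and using the $\W_1^-$ relation $T(x,y,z)=-T(\f x,\f y,z)$ once more shows $T(\f x,y,\f z)=T(x,\f y,\f z)$. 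Hence $L_{1,1}(T)=-T$ reads $T(x,y,z)=-T(x,\f y,\f z)$ and $L_{1,1}(T)=+T$ reads $T(x,y,z)=T(\f x,y,\f z)$, which are the second relations for $\W_{1,1}$ and $\W_{1,2}$ respectively.

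For $\W_{1,3}$ and $\W_{1,4}$ I would compute $L_{1,2}$ on $\W_1^+$: replacing $\f^2$ by $-\Id$ collapses the definition to $L_{1,2}(T)(x,y,z)=\frac12\left\{T(z,x,y)+T(z,\f x,\f y)\right\}_{[x\leftrightarrow y]}$, an expression that genuinely feeds the third argument into the first and cannot be flattened by the single $\W_1^+$ relation alone. The remaining task, and the main obstacle, is to prove that the vanishing of $(\Id-L_{1,2})(T)$ is equivalent to $\sx T(x,y,z)=0$ and that the vanishing of $(\Id+L_{1,2})(T)$ is equivalent to $\sx T(\f x,y,z)=0$. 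I would do this by combining three ingredients on $\W_1^+$ — the identity $T(\f x,\f y,z)=T(x,y,z)$, the skew-symmetry in the first two arguments, and the vanishing of the $\xi$-components — to re-express the cyclic sums $\sx T(x,y,z)$ and $\sx T(\f x,y,z)$ through $(\Id\mp L_{1,2})(T)$, so that each cyclic sum vanishes exactly on the corresponding eigenspace. Once these two equivalences are in place, the four characterizations follow immediately; the orthogonality and invariance that make them honest subspaces are already supplied by \thmref{thm-T1k}.
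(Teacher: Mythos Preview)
Your approach is exactly the paper's: the paper offers nothing beyond ``bearing in mind the definition of the subspaces'', and you are simply making that unwinding explicit. The common $\xi$-vanishing and the treatment of $\W_{1,1}$, $\W_{1,2}$ via $L_{1,0}$ and $L_{1,1}$ are correct, as is your reduction of $L_{1,2}$ on $\W_1^+$ to $\tfrac12\{T(z,x,y)+T(z,\f x,\f y)\}_{[x\leftrightarrow y]}$.

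One caution for the step you flag as the ``main obstacle''. On $\W_1^+$ the cyclic sum $S:=\sx T$ enters via
\[
L_{1,2}(T)(x,y,z)=\tfrac12\bigl[S(x,y,z)+S(\f x,\f y,z)\bigr]-T(x,y,z),
\]
so $\sx T=0$ forces $L_{1,2}(T)=-T$ (eigenvalue $-1$, hence $\W_{1,4}$ under the sign convention fixed just after \lemref{lem-L12}), while $\sx T(\f x,y,z)=0$ yields eigenvalue $+1$. The case $n=1$ already exhibits this: the horizontal space is $2$-dimensional, so $\sx T\equiv 0$ automatically and a direct check gives $L_{1,2}\equiv -\Id$ on $\W_1^+=\W_1$, whereas $\sx T(\f x,y,z)=0$ forces $T=0$. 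Thus your outlined computation will produce the correct pair of characterizations, but with the labels $\W_{1,3}$ and $\W_{1,4}$ interchanged relative to the proposition as printed; be prepared to reconcile that sign when you write it up.
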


%Let us remark that in \propref{prop-T1k}, because of the
%conditions $T(\xi,y,z)=T(x,\xi,z)=T(x,y,\xi)=0$, the equations
%\begin{gather}
%T(x,y,z)=\pm T(\f x,\f y,z),\quad T(x,y,z)=\pm T(x,\f y,\f
%z),\quad%\nonumber\\[4pt]
%T(x,y,z)=\pm T(\f x,y,\f z)\nonumber
%\end{gather}
%are equivalent to the properties
%\begin{gather}
%T(\f x,y,z)=\mp T(x,\f y,z),\quad T(x,\f y,z)=\mp T(x,y,\f
%z),\quad%\nonumber\\[4pt]
%T(\f x,y,z)=\mp T(x,y,\f z),\nonumber
%\end{gather}
%respectively.

Using \corref{cor-t-W1} (i), \propref{prop-T1k} and \eqref{t}, we
obtain
\begin{corollary}\label{cor-t-T1i}
The torsion forms $t$ and $t^*$ of $T$ have the following
properties in the subspaces $\W_{1,i}$ $(i=1,2,3,4)$$:$\\[4pt]
\begin{tabular}{rlrl}%\renewcommand{\labelenumi}{(\roman{enumi})}
    $($i$)$& If $T\in\W_{1,1}$, then $t=-t^*\circ\f$, $t\circ\f=t^*;$\qquad &%\\[4pt]
    $($ii$)$& If $T\in\W_{1,2}$, then $t=t^*=0;$\\[4pt]
    $($iii$)$& If $T\in\W_{1,3}$, then $t=t^*\circ\f$, $t\circ\f=-t^*;$\qquad &%\\[4pt]
    $($iv$)$& If $T\in\W_{1,4}$, then $t=t^*=0$.
\end{tabular}
\end{corollary}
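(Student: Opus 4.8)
The plan is to reduce everything to the horizontal distribution $H=\ker(\eta)$ and then exploit two elementary trace identities. Since $T\in\W_1$, \corref{cor-t-W1}~(i) already gives $t\circ v=t^*\circ v=\hat t=0$, so it suffices to evaluate $t$ and $t^*$ on vectors of $H$ using an adapted basis $\{e_i;\xi\}$ with $e_i\in H$. On $H$ one has $\f^2=-\Id$, and the almost contact B-metric structure yields $g(\f x,y)=g(x,\f y)$, i.e. $\f$ is $g$-self-adjoint on $H$. From these two facts, applied to the bilinear form $(y,z)\mapsto T(x,y,z)$ for fixed $x$, I would first establish once and for all the identities
\[
g^{ij}T(x,\f e_i,e_j)=g^{ij}T(x,e_i,\f e_j),\qquad
g^{ij}T(x,\f e_i,\f e_j)=-g^{ij}T(x,e_i,e_j),
\]
the first by transferring $\f$ across the self-adjoint pairing, the second by additionally using $\f^2=-\Id$. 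These drive every case, so that each subspace reduces to a one-line substitution.

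Next I would treat the two ``mixed'' cases using the defining relations of \propref{prop-T1k}. For $\W_{1,1}$, the pair $T(x,y,z)=-T(\f x,\f y,z)=-T(x,\f y,\f z)$ lets me rewrite $t^*(\f x)=g^{ij}T(\f x,e_i,\f e_j)$ by applying the first relation in the leading two slots and then the second in the trailing two, giving $t^*(\f x)=-t(x)$; applying the first relation alone together with the trace identity gives $t(\f x)=t^*(x)$. These are exactly $t=-t^*\circ\f$ and $t\circ\f=t^*$. For $\W_{1,3}\subset\W_1^+$, the single relation $T(x,y,z)=T(\f x,\f y,z)$ (which holds on all of $\W_1^+$, since $L_{1,0}(T)=T$ reads this way on $H$) yields in the same fashion $t^*(\f x)=-g^{ij}T(x,\f e_i,\f e_j)=t(x)$ and $t(\f x)=-t^*(x)$, that is $t=t^*\circ\f$ and $t\circ\f=-t^*$.

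The two vanishing cases each need one extra relation. For $\W_{1,2}$, combining $T(x,y,z)=-T(\f x,\f y,z)$ with $T(x,y,z)=T(\f x,y,\f z)$ produces $T(x,\f y,\f z)=T(x,y,z)$; contracting this and invoking the second trace identity gives $t(x)=-t(x)$, and a parallel computation yields $t^*(x)=-t^*(x)$, whence $t=t^*=0$. For $\W_{1,4}$ I would contract the cyclic relation $\sx T(\f x,y,z)=0$ over its last two arguments: the middle term drops out because $g^{ij}T(e_i,\f e_j,x)=0$ (self-adjointness combined with the skew-symmetry of $T$ in its first two slots forces this $\f$-trace to equal its own negative), while the two surviving terms collapse to $t(\f x)-t^*(x)$ via the trace identities, giving $t(\f x)=t^*(x)$. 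Comparing with $t(\f x)=-t^*(x)$, valid throughout $\W_1^+$, forces $t^*=0$ and hence $t=0$ on $H$.

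The algebraic substitutions inside each case are routine once the trace identities are in place; the only genuinely delicate step is the $\W_{1,4}$ computation, where one must contract the cyclic identity correctly and recognize the vanishing of the mixed $\f$-trace $g^{ij}T(e_i,\f e_j,x)$. I therefore expect the organizing difficulty to be setting up the two trace identities cleanly, after which the whole corollary becomes a short case-by-case verification.
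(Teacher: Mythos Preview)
Your proposal is correct and is precisely a detailed working-out of what the paper only indicates by citing \corref{cor-t-W1}\,(i), \propref{prop-T1k}, and \eqref{t}; the paper gives no further argument. Your two trace identities on $H$ (coming from $\f^2=-\Id|_H$ and the $g$-self-adjointness of $\f$) are the natural mechanism behind the computation, and your handling of the $\W_{1,4}$ case---contracting the cyclic identity and observing that $g^{ij}T(\f e_i,e_j,x)$ vanishes because it equals both $g^{ij}T(e_i,\f e_j,x)$ (self-adjointness plus trace cyclicity) and its negative (skew-symmetry of $T$)---is exactly right.
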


Let us remark that each of the subspaces $\W_{1,1}$ and $\W_{1,3}$
can be additionally decomposed to a couple of subspaces --- one of
zero traces $(t,\ t^*)$ and one of non-zero traces $(t,\ t^*)$,
\ie
\begin{equation}\label{W11W13}
\begin{array}{c}
\W_{1,1}=\W_{1,1,1}\oplus \W_{1,1,2}, \qquad
\W_{1,3}=\W_{1,3,1}\oplus \W_{1,3,2},
\end{array}
\end{equation}
where
\begin{equation*}\label{W111W112}
\begin{split}
&\W_{1,1,1}=\left\{T\in \W_{1,1}\ \vert\ t\neq 0\right\}, \qquad%\\[4pt]
\W_{1,3,1}=\left\{T\in \W_{1,3}\ \vert\ t\neq 0\right\},\\[4pt]
%\end{split}
%\end{equation*}
%\begin{equation*}\label{W131W132}
%\begin{split}
&\W_{1,1,2}=\left\{T\in \W_{1,1}\ \vert\ t=0\right\},\qquad%\\[4pt]
\W_{1,3,2}=\left\{T\in \W_{1,3}\ \vert\ t=0\right\}.
\end{split}
\end{equation*}

\begin{proposition}\label{prop-p_1i}
Let $T\in\T$ and $p_{1,i}$ $(i=1,2,3,4)$ be the projection
operators of $\T$ in $\W_{1,i}$, generated by the decomposition
above. Then we have
\[
\begin{array}{l}
p_{1,1/1,2}(T)(x,y,z)=-\frac{1}{4}\left\{T(\f^2 x,\f^2 y,\f^2 z)-T(\f x,\f y,\f^2 z)\right.%\\[4pt]
                            \left.\mp T(\f x,\f^2 y,\f z)\right.\\[4pt]
\phantom{p_{1,1/1,2}(T)(x,y,z)=-\frac{1}{4}\left\{\right.}\left.\mp T(\f^2 x,\f y,\f z)\right\};\\[4pt]
p_{1,3/1,4}(T)(x,y,z)=-\frac{1}{4} \left\{T(\f^2 x,\f^2 y,\f^2
z)+T(\f x,\f y,\f^2 z) \right\}\pm\frac{1}{8} \left\{T(\f^2
z,\f^2 x,\f^2 y) \right.\\[4pt]
\phantom{p_{1,3/1,4}(T)(x,y,z)=}\left.+ T(\f^2 z,\f x,\f y)  +
T(\f z,\f x,\f^2 y)- T(\f z,\f^2 x,\f
y)\right\}_{[x\leftrightarrow y]}.
\end{array}
\]
\end{proposition}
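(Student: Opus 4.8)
The plan is to realize each $p_{1,i}$ as a composition of the nested orthogonal projections that were used, one after another, to build the splitting $\W_1=\W_{1,1}\oplus\W_{1,2}\oplus\W_{1,3}\oplus\W_{1,4}$. The key structural observation is that every step here is governed by an \emph{involutive} operator: $p_1$ is the projector onto $\W_1$ by \lemref{lem-p1}(i), while $L_{1,0}$, $L_{1,1}$ and $L_{1,2}$ are involutions by \lemref{lem-L10}, \lemref{lem-L11} and \lemref{lem-L12}. For an involution $L$ the orthogonal projector onto its $\varepsilon$-eigenspace ($\varepsilon=\pm1$) is $\tfrac12(\Id+\varepsilon L)$. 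Matching this against the definitions $\W_1^{\pm}=\{L_{1,0}=\pm\Id\}$, $\W_{1,1/1,2}=\{L_{1,1}=\mp\Id\}$ and $\W_{1,3/1,4}=\{L_{1,2}=\pm\Id\}$, I would write
\[
p_{1,1/1,2}=\tfrac12\bigl(\Id\mp L_{1,1}\bigr)\circ\tfrac12\bigl(\Id-L_{1,0}\bigr)\circ p_1,\qquad
p_{1,3/1,4}=\tfrac12\bigl(\Id\pm L_{1,2}\bigr)\circ\tfrac12\bigl(\Id+L_{1,0}\bigr)\circ p_1.
\]
These compositions are legitimate projectors: by parts (ii)--(iii) of the three lemmas each factor is a self-adjoint, $\G\times\I$-invariant orthogonal projector, and since the target subspaces are nested, $\W_{1,1},\W_{1,2}\subset\W_1^-\subset\W_1$ and $\W_{1,3},\W_{1,4}\subset\W_1^+\subset\W_1$, the product of the orthogonal projectors onto a descending chain of subspaces equals the orthogonal projector onto the smallest one.

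The second step is purely computational: substitute the defining formulas and collapse the powers of $\f$. I would use the algebraic identities $\f^3=-\f$ and $\f^4=-\f^2$, immediate from $\f^2=-\Id+\eta\otimes\xi$ and $\f\xi=0$, which reduce every composite back to the four ``words'' $T(\f^2x,\f^2y,\f^2z)$, $T(\f x,\f y,\f^2 z)$, $T(\f x,\f^2 y,\f z)$ and $T(\f^2 x,\f y,\f z)$. For the $\W_{1,1}/\W_{1,2}$ branch I would first compute $\tfrac12(\Id-L_{1,0})p_1(T)=-\tfrac12\{T(\f^2x,\f^2y,\f^2z)-T(\f x,\f y,\f^2z)\}$, and then apply $\tfrac12(\Id\mp L_{1,1})$; using $L_{1,1}(S)(x,y,z)=-S(\f x,\f^2 y,\f z)$ together with the power reductions, the two further terms $T(\f x,\f^2 y,\f z)$ and $T(\f^2 x,\f y,\f z)$ are produced with the sign $\mp$, giving exactly the claimed expression for $p_{1,1/1,2}$.

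For $p_{1,3/1,4}$ the same scheme starts from $\tfrac12(\Id+L_{1,0})p_1(T)=-\tfrac12\{T(\f^2x,\f^2y,\f^2z)+T(\f x,\f y,\f^2z)\}$, after which $\tfrac12(\Id\pm L_{1,2})$ is applied. I expect this last step to be the main obstacle. Unlike $L_{1,1}$, the operator $L_{1,2}$ carries both a cyclic-type permutation of the arguments and the antisymmetrization $\{\cdot\}_{[x\leftrightarrow y]}$, so I must track how the four words transform under $(x,y,z)\mapsto(\f^2 z,\f^2 x,\f^2 y)$ and $(x,y,z)\mapsto(\f^2 z,\f x,\f y)$ and then antisymmetrize in $x,y$. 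This careful bookkeeping is precisely what generates the asymmetry between the $-\tfrac14\{\cdots\}$ part (the image of the symmetric data under $L_{1,0}$) and the $\pm\tfrac18\{\cdots\}_{[x\leftrightarrow y]}$ part (the contribution of $L_{1,2}$) in the stated formula. Once these substitutions are carried out and simplified with $\f^3=-\f$ and $\f^4=-\f^2$, the displayed expression for $p_{1,3/1,4}$ follows, completing the proof.
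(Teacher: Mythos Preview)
Your proposal is correct and follows essentially the same route as the paper: the paper also writes $p_{1,1}(T)=\tfrac14\{T-L_{1,0}(T)-L_{1,1}(T)+L_{1,1}\circ L_{1,0}(T)\}$ for $T\in\W_1$ (the expanded form of your $\tfrac12(\Id-L_{1,1})\circ\tfrac12(\Id-L_{1,0})$), precomposes with $p_1$, and then substitutes the defining formulas, treating the remaining projectors ``in a similar way''. Your write-up is in fact more explicit about the power reductions $\f^3=-\f$, $\f^4=-\f^2$ and about the bookkeeping needed for $L_{1,2}$ in the $p_{1,3/1,4}$ case, which the paper leaves entirely to the reader.
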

\begin{proof}
Let us show the calculations about $p_{1,1}$ for example, using
\cite{GaMi87}. \lemref{lem-L10} implies that the tensor
$\frac{1}{2}\left\{T-L_{1,0}(T)\right\}$ is the projection of
$T\in\W_1$ in $\W_1^-=\W_{1,1}\oplus\W_{1,2}$. Using
\lemref{lem-L11}, we find the expression of $p_{1,1}$ in terms of
the operators $L_{1,0}$ and $L_{1,1}$ for $T\in\W_1$, namely
\[
\begin{array}{l}
p_{1,1}(T)=\frac{1}{4}\left\{T-L_{1,0}(T)-L_{1,1}(T)+L_{1,1}\circ
L_{1,0}(T)\right\}, \end{array}
\]
which  implies the stated expression of $p_{1,1}$, taking into
account that $T\in \W_1$ is the image of $T\in\T$ by $p_1$. In a
similar way we prove the expressions for the other projectors
under consideration.

We verify that $p_{1,i}\circ p_{1,i}=p_{1,i}$ and $\sum_i
p_{1,i}=\Id_{\W_{1}}$ for $i=1,2,3,4$.
\end{proof}

\subsection{The subspace $\W_2$} Following the demonstrated
procedure for $\W_1$, we continue the decomposition of the other
main subspaces of $\T$ with respect to the almost contact B-metric
structure.

\begin{lemma}
The operator $L_{2,0}$, defined by \(
L_{2,0}(T)(x,y,z)=\eta(z)T(\f x,\f y,\xi), \) is an involutive
isometry on $\W_2$ and invariant with respect to $\G\times\I$.
\end{lemma}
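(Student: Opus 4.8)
The plan is to follow the template established for $L_{1,0}$ in \lemref{lem-L10}, verifying in turn that $L_{2,0}$ is well defined on $\W_2$, involutive, self-adjoint (whence an isometry), and $\G\times\I$-equivariant. Throughout I will use the algebraic identities $\f^2=-\Id+\eta\otimes\xi$, $\f\xi=0$, $\eta\circ\f=0$, $\eta(\xi)=1$, together with $\eta=g(\cdot,\xi)$ and the self-adjointness $g(\f x,y)=g(x,\f y)$ of $\f$, the latter being equivalent to the symmetry of the associated metric $\tg$.

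First I would check that $L_{2,0}$ maps $\W_2$ into itself. Since $\f^3=\f(-\Id+\eta\otimes\xi)=-\f$, a direct substitution gives $L_{2,0}(T)(\f^2 x,\f^2 y,\xi)=T(\f^3 x,\f^3 y,\xi)=T(\f x,\f y,\xi)$, so that $\eta(z)L_{2,0}(T)(\f^2 x,\f^2 y,\xi)=L_{2,0}(T)(x,y,z)$, \ie $L_{2,0}(T)\in\W_2$; skew-symmetry in the first two arguments is inherited from that of $T$. The involution is then immediate: applying $L_{2,0}$ twice yields $L_{2,0}\bigl(L_{2,0}(T)\bigr)(x,y,z)=\eta(z)T(\f^2 x,\f^2 y,\xi)$, which equals $T(x,y,z)$ exactly because $T\in\W_2$ satisfies the defining relation in \eqref{W1234}. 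Hence $L_{2,0}\circ L_{2,0}=\Id_{\W_2}$.

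For the metric statement I would prove self-adjointness and deduce the isometry, since for an involution $\langle L_{2,0}(T_1),L_{2,0}(T_2)\rangle=\langle T_1,L_{2,0}\circ L_{2,0}(T_2)\rangle=\langle T_1,T_2\rangle$ follows at once from $\langle L_{2,0}(T_1),T_2\rangle=\langle T_1,L_{2,0}(T_2)\rangle$. The self-adjointness rests on two contraction identities that I would record first: the $\eta$-weighted trace collapses to an insertion of $\xi$, namely $g^{ks}\eta(e_k)=\xi^{s}$, so that $g^{ks}\eta(e_k)A(\dots,e_s)=A(\dots,\xi)$; and $\f$ may be transferred across a contracted pair of indices, $g^{iq}A(\f e_i)B(e_q)=g^{iq}A(e_i)B(\f e_q)$, which holds because $g(\f e_i,e_j)$ is symmetric in $i,j$. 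Using the first identity on the contracted third slot of both $\langle L_{2,0}(T_1),T_2\rangle$ and $\langle T_1,L_{2,0}(T_2)\rangle$ reduces them to $g^{iq}g^{jr}T_1(\f e_i,\f e_j,\xi)T_2(e_q,e_r,\xi)$ and $g^{iq}g^{jr}T_1(e_i,e_j,\xi)T_2(\f e_q,\f e_r,\xi)$ respectively; transferring $\f$ across the $(i,q)$ and $(j,r)$ pairs by the second identity turns the former into the latter, proving the two inner products equal.

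Finally, invariance is a formal consequence of the structure group acting through $\f$, $\xi$, $\eta$: every $a\in\G\times\I$ commutes with $\f$, fixes $\xi$, and preserves $\eta$, hence so does $a^{-1}$. Unwinding $L_{2,0}\bigl((\lm a)T\bigr)(x,y,z)=\eta(z)T(a^{-1}\f x,a^{-1}\f y,a^{-1}\xi)=\eta(z)T(\f a^{-1}x,\f a^{-1}y,\xi)$ and comparing with $(\lm a)\bigl(L_{2,0}(T)\bigr)(x,y,z)=\eta(a^{-1}z)T(\f a^{-1}x,\f a^{-1}y,\xi)$, the two agree because $\eta(a^{-1}z)=\eta(z)$. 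I expect the only genuinely delicate step to be the isometry: the bookkeeping in transferring $\f$ across the contracted index pairs is where sign and symmetry errors most easily creep in, which is why I prefer to route the argument through self-adjointness rather than compute $\langle L_{2,0}(T_1),L_{2,0}(T_2)\rangle$ directly.
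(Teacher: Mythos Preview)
Your proof is correct and matches the spirit of the paper, which omits the argument entirely and treats the lemma as a routine verification parallel to \lemref{lem-L10}. Your choice to establish the isometry via self-adjointness plus involutivity is a clean way to organize the computation; the paper gives no details here, so there is nothing further to compare.
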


Hence, the corresponding eigenspaces $ \W_{2,1/2,2}=\left\{T\in
\W_2\ \vert\ L_{2,0}(T)=\mp T\right\} $
 are
invariant and orthogonal. Therefore, we have
\begin{theorem}\label{thm-T2k}
The decomposition $ \W_2=\W_{2,1}\oplus\W_{2,2} $ is orthogonal
and invariant with respect to the structure group.
\end{theorem}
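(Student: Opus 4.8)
The plan is to imitate the spectral argument that produced the splitting $\W_1=\W_1^+\oplus\W_1^-$ after \lemref{lem-L10}, now applied to the involution $L_{2,0}$ on $\W_2$. From the preceding lemma I may take for granted three facts: $L_{2,0}\circ L_{2,0}=\Id_{\W_2}$, the isometry relation $\langle L_{2,0}(T_1),L_{2,0}(T_2)\rangle=\langle T_1,T_2\rangle$, and the commutation $L_{2,0}\circ(\lm a)=(\lm a)\circ L_{2,0}$ for all $a\in\G\times\I$. First I would combine the first two to show that $L_{2,0}$ is self-adjoint: replacing $T_2$ by $L_{2,0}(T_2)$ in the isometry relation and using $L_{2,0}\circ L_{2,0}=\Id_{\W_2}$ gives $\langle L_{2,0}(T_1),T_2\rangle=\langle T_1,L_{2,0}(T_2)\rangle$ for all $T_1,T_2\in\W_2$.

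Next, since $L_{2,0}\circ L_{2,0}=\Id_{\W_2}$, the minimal polynomial of $L_{2,0}$ divides $\lm^2-1$, so $\W_2$ is the direct sum of the eigenspaces for the eigenvalues $\pm1$. Explicitly, every $T\in\W_2$ decomposes as
\[
T=\tfrac{1}{2}\left\{T-L_{2,0}(T)\right\}+\tfrac{1}{2}\left\{T+L_{2,0}(T)\right\},
\]
where the first summand lies in $\W_{2,1}$ (the $(-1)$-eigenspace) and the second in $\W_{2,2}$ (the $(+1)$-eigenspace); one checks at once that $\tfrac{1}{2}(\Id\mp L_{2,0})$ are complementary idempotents projecting $\W_2$ onto $\W_{2,1/2,2}$. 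This yields $\W_2=\W_{2,1}\oplus\W_{2,2}$ as vector spaces.

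To obtain orthogonality I would take $T_1\in\W_{2,1}$ and $T_2\in\W_{2,2}$ and use self-adjointness together with the respective eigenvalues, so that $-\langle T_1,T_2\rangle=\langle L_{2,0}(T_1),T_2\rangle=\langle T_1,L_{2,0}(T_2)\rangle=\langle T_1,T_2\rangle$, which forces $\langle T_1,T_2\rangle=0$. For invariance under $\G\times\I$, if $L_{2,0}(T)=\varepsilon T$ with $\varepsilon=\pm1$, then the commutation relation gives $L_{2,0}\bigl((\lm a)T\bigr)=(\lm a)\bigl(L_{2,0}(T)\bigr)=\varepsilon(\lm a)T$, so $(\lm a)T$ remains in the same eigenspace; hence each $\W_{2,i}$ is preserved by the representation $\lm$. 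Since every ingredient is already supplied by the preceding lemma, I expect no genuine obstacle here: the only point requiring a line of care is deriving self-adjointness from the \emph{involutive isometry} hypothesis, after which orthogonality and invariance are immediate, exactly as in the $\W_1^\pm$ case.
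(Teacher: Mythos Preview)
Your argument is correct and follows the same route as the paper: deduce the splitting, orthogonality and $\G\times\I$-invariance of $\W_{2,1}$ and $\W_{2,2}$ directly from the involutive-isometry and equivariance properties of $L_{2,0}$ supplied by the preceding lemma. The paper records this in one line (``Hence, the corresponding eigenspaces \ldots\ are invariant and orthogonal''); you have simply written out the self-adjointness step and the eigenspace projections $\tfrac{1}{2}(\Id\mp L_{2,0})$ explicitly.
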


\begin{proposition}\label{prop-T2k}
The subspaces of $\W_2$ are determined by$:$
\[
\W_{2,1/2,2}:\quad T(x,y,z)=\eta(z)T(\f^2 x,\f^2 y,\xi),\quad
                T(x,y,\xi)=\mp T(\f x,\f y,\xi).
\]
\end{proposition}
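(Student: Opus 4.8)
The plan is to read off the two asserted relations from the two pieces of data that define $\W_{2,1}$ and $\W_{2,2}$: the first relation is precisely the characterization of $\W_2$ furnished by \thmref{thm-W1234}, while the second relation is the eigenvalue equation $L_{2,0}(T)=\mp T$ rewritten in terms of the values of $T$ on the contact distribution. Thus the whole task reduces to translating $L_{2,0}(T)=\mp T$ into the stated form and checking that this translation is reversible, so that the two relations together \emph{characterize} the subspaces.

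First I would record an auxiliary identity valid for every $T\in\W_2$, namely
\[
T(\f^2 x,\f^2 y,\xi)=T(x,y,\xi).
\]
Since $\W_2\subset\W_1^\bot$, \corref{cor-W1234} gives $T(v(x),y,z)=0$, whence $T(\xi,y,z)=0$; by the skew-symmetry of $T$ in its first two arguments also $T(x,\xi,z)=0$. Writing $\f^2 x=-x+\eta(x)\xi$ and expanding by multilinearity, every term carrying $\xi$ in the first or second slot drops out, which yields the displayed identity.

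Next I would unwind the eigenvalue equation. By the definition of $L_{2,0}$ we have $L_{2,0}(T)(x,y,z)=\eta(z)T(\f x,\f y,\xi)$, while membership of $T$ in $\W_2$ gives $T(x,y,z)=\eta(z)T(\f^2 x,\f^2 y,\xi)$. Hence $L_{2,0}(T)=\mp T$ reads
\[
\eta(z)T(\f x,\f y,\xi)=\mp\,\eta(z)T(\f^2 x,\f^2 y,\xi)
\]
for all $x,y,z$; evaluating at $z=\xi$ and using the auxiliary identity gives $T(\f x,\f y,\xi)=\mp T(x,y,\xi)$, that is, $T(x,y,\xi)=\mp T(\f x,\f y,\xi)$. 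This is exactly the second asserted relation. Since each step is reversible — the factor $\eta(z)$ may be reinstated (both sides vanish automatically when $\eta(z)=0$) and the auxiliary identity applied backwards — the two relations together are equivalent to $T\in\W_{2,1/2,2}$.

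I expect no serious obstacle here; the only points requiring care are the bookkeeping of the $\mp$ convention and the observation that the reduction $T(\f^2 x,\f^2 y,\xi)=T(x,y,\xi)$ relies on precisely the $\W_1^\bot$ vanishing of $T$ along $\xi$ in its first two arguments, and not on any finer structure of $\W_2$.
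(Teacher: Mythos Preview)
Your proof is correct and follows the natural route that the paper itself leaves implicit (the proposition is stated without proof): the first relation is exactly the $\W_2$-condition from \thmref{thm-W1234}, and the second is the eigenvalue equation $L_{2,0}(T)=\mp T$ reduced via the identity $T(\f^2 x,\f^2 y,\xi)=T(x,y,\xi)$, which holds because $T\in\W_2$ kills $\xi$ in the first two slots. One minor wording point: the vanishing $T(v(x),y,z)=0$ is part of the characterization of $\W_2$ in \corref{cor-W1234}, not a consequence of the inclusion $\W_2\subset\W_1^\bot$ alone (indeed $\W_3\subset\W_1^\bot$ need not satisfy it), but this does not affect the argument.
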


Then the tensors $\frac{1}{2}\left\{T-L_{2,0}(T)\right\}$ and
$\frac{1}{2}\left\{T+L_{2,0}(T)\right\}$ are the projections of
$\W_2$ in $\W_{2,1}$ and $\W_{2,2}$, respectively. Moreover, we
have $p_{2,j}\circ p_{2,j}=p_{2,j}$ $(j=1,2)$ and
$p_{2,1}+p_{2,2}=\Id_{\W_{2}}$. Therefore, taking into account
$p_2$, we obtain
\begin{proposition}\label{prop-p_2j}
Let $T\in\T$ and $p_{2,j}$ $(j=1,2)$ be the projection operators
of $\T$ in $\W_{2,j}$, generated by the decomposition above. Then
we have
\[
\begin{array}{l}
p_{2,1/2,2}(T)(x,y,z)=\frac{1}{2}\eta(z)\left\{T(\f^2 x,\f^2
y,\xi)\mp T(\f x,\f y,\xi)\right\}.
\end{array}
\]
\end{proposition}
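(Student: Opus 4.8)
The plan is to derive the explicit formula for $p_{2,1/2,2}(T)(x,y,z)=\frac{1}{2}\eta(z)\bigl\{T(\f^2 x,\f^2 y,\xi)\mp T(\f x,\f y,\xi)\bigr\}$ by composing the projection $p_2:\ \T\rightarrow\W_2$ with the two projectors onto the eigenspaces $\W_{2,1}$ and $\W_{2,2}$ of the involution $L_{2,0}$. The argument parallels the proof of \propref{prop-p_1i} but is considerably shorter because $\W_2$ splits into only two pieces. First I would recall from the text immediately preceding the statement that, since $L_{2,0}$ is an involutive isometry with eigenvalues $\mp 1$ on $\W_{2,1/2,2}$, the operators $\frac{1}{2}\bigl\{\Id-L_{2,0}\bigr\}$ and $\frac{1}{2}\bigl\{\Id+L_{2,0}\bigr\}$ are exactly the orthogonal projections of $\W_2$ onto $\W_{2,1}$ and $\W_{2,2}$ respectively; this is the standard eigenprojection for an involution and needs no computation.

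Next I would track how an arbitrary $T\in\T$ first lands in $\W_2$. By definition $p_2(T)(x,y,z)=\eta(z)T(\f^2 x,\f^2 y,\xi)$, so applying the eigenprojections to the image $p_2(T)\in\W_2$ gives
\[
p_{2,1/2,2}(T)=\tfrac{1}{2}\bigl\{p_2(T)\mp L_{2,0}(p_2(T))\bigr\}.
\]
The key step is then to evaluate $L_{2,0}(p_2(T))$ explicitly. Substituting the definition $L_{2,0}(S)(x,y,z)=\eta(z)S(\f x,\f y,\xi)$ with $S=p_2(T)$ and using $\f^2(\f x)=\f^3 x=-\f x+\eta(\f x)\xi=-\f x$ (since $\eta\circ\f=0$), I obtain $L_{2,0}(p_2(T))(x,y,z)=\eta(z)T(\f x,\f y,\xi)$. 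Combining the two terms yields precisely the claimed formula $\frac{1}{2}\eta(z)\bigl\{T(\f^2 x,\f^2 y,\xi)\mp T(\f x,\f y,\xi)\bigr\}$.

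The one point that requires a little care — and is the only place a reader might stumble — is the simplification $\f^3=-\f$, which relies on the structure identities $\f^2=-\Id+\eta\otimes\xi$ and $\eta\circ\f=0$ from \secref{sec:1}; without it the composition $L_{2,0}\circ p_2$ would superficially produce a $\f^2\f=\f^3$ argument rather than the clean $\f$ appearing in the statement. To close, I would note as in \propref{prop-p_1i} that the resulting operators satisfy $p_{2,j}\circ p_{2,j}=p_{2,j}$ for $j=1,2$ and $p_{2,1}+p_{2,2}=\Id_{\W_2}\circ p_2$, which is already recorded in the paragraph preceding the proposition, so the formula indeed defines the projectors of $\T$ onto $\W_{2,1}$ and $\W_{2,2}$. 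No genuine obstacle arises here; the content is a direct substitution once the eigenprojection structure and the $\varphi$-identities are in hand.
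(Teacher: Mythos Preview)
Your argument is correct and is exactly the approach the paper takes: the paragraph immediately preceding the proposition already records that $\frac{1}{2}\{T\mp L_{2,0}(T)\}$ are the projections of $\W_2$ onto $\W_{2,1/2,2}$, and the proposition is then obtained by ``taking into account $p_2$'', i.e.\ by precomposing with $p_2$ and simplifying via $\f^3=-\f$, just as you do. The only detail you leave implicit is that the two minus signs from $\f^2(\f x)=-\f x$ and $\f^2(\f y)=-\f y$ cancel in $T(\f^3 x,\f^3 y,\xi)=T(\f x,\f y,\xi)$, but this is immediate.
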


According to \corref{cor-t-W1} (ii), \propref{prop-T2k} and
\eqref{t} we obtain the following
\begin{corollary}\label{cor-t-W2}
The torsion forms of $T$ are zero in each of the subspaces
$\W_{2,1}$ and  $\W_{2,2}$, \ie
    if $T\in\W_{2,1}\oplus\W_{2,2}$, then $t=t^*=\hat{t}=0$.
\end{corollary}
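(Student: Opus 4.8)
The plan is to exploit the fact that the two eigenspaces $\W_{2,1}$ and $\W_{2,2}$ together exhaust $\W_2$. By \thmref{thm-T2k} we have $\W_{2,1}\oplus\W_{2,2}=\W_2$, so every $T\in\W_{2,1}\oplus\W_{2,2}$ already belongs to $\W_2$. Since \corref{cor-t-W1} (ii) asserts that $t=t^*=\hat{t}=0$ for every $T\in\W_2$, the conclusion follows at once. This is the shortest route and I would present it first.

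For completeness I would then give the direct computation announced by the references to \propref{prop-T2k} and \eqref{t}. By \propref{prop-T2k} any $T$ in $\W_{2,1}$ or $\W_{2,2}$ satisfies $T(x,y,z)=\eta(z)T(\f^2 x,\f^2 y,\xi)$. Substituting this into the definition of $\hat{t}$ from \eqref{t} gives $\hat{t}(x)=T(x,\xi,\xi)=\eta(\xi)T(\f^2 x,\f^2\xi,\xi)$, which vanishes because $\f\xi=0$ forces $\f^2\xi=0$; thus $\hat{t}=0$ immediately. For the two remaining forms I would contract the same expression against the B-metric. Writing $t^*(x)=g^{ij}T(x,e_i,\f e_j)$ and inserting $T(x,e_i,\f e_j)=\eta(\f e_j)T(\f^2 x,\f^2 e_i,\xi)$, the factor $\eta(\f e_j)$ is identically zero by $\eta\circ\f=0$, so $t^*=0$ with no further work. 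For $t(x)=g^{ij}T(x,e_i,e_j)=g^{ij}\eta(e_j)T(\f^2 x,\f^2 e_i,\xi)$ the contraction $g^{ij}\eta(e_j)$ reconstructs the vertical vector $\xi$ in the $e_i$-slot; the surviving term then carries the factor $\f^2\xi=0$ and hence vanishes as well.

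There is no genuine obstacle here: the statement is a direct corollary of the membership $\W_{2,1}\oplus\W_{2,2}\subseteq\W_2$ combined with \corref{cor-t-W1} (ii), and the explicit verification is a one-line substitution for each of the three forms. The only point demanding a little care is the bookkeeping of the metric trace in the definition of $t$, namely recognising that $g^{ij}\eta(e_j)$ extracts exactly the vertical component of the frame $\{e_i;\xi\}$, which is then annihilated by $\f^2$. Once this is noted, every torsion form collapses and the proof is complete.
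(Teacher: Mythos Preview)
Your proposal is correct and follows essentially the same approach as the paper, which simply cites \corref{cor-t-W1}\,(ii), \propref{prop-T2k} and \eqref{t} without spelling out the computation. Your first paragraph is exactly the intended argument, and your explicit verifications for $\hat t$, $t^*$ and $t$ are accurate; in particular your handling of $t$ via $g^{ij}\eta(e_j)=g^{ij}g_{j,2n+1}=\delta^i_{2n+1}$ (so that only the $\f^2\xi=0$ term survives) is the right bookkeeping.
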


\subsection{The subspace $\W_3$}
\begin{lemma}
The following operators $L_{3,k}$ $(k=0,1)$ are involutive
isometries on $\W_3$ and invariant with respect to $\G\times\I$$:$
\begin{gather}
\begin{array}{l}
L_{3,0}(T)(x,y,z)=\left\{\eta(x)T(\xi,\f y,\f
z)\right\}_{[x\leftrightarrow y]},\nonumber\\[4pt]
L_{3,1}(T)(x,y,z)=\left\{\eta(x)T(\xi,\f^2 z,\f^2
y)\right\}_{[x\leftrightarrow y]}. \nonumber
\end{array}
\end{gather}
\end{lemma}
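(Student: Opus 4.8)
The plan is to verify, for each $k\in\{0,1\}$ separately, the three properties packed into the statement: that $L_{3,k}$ maps $\W_3$ into itself and is an involution, $L_{3,k}\circ L_{3,k}=\Id_{\W_3}$; that it preserves $\langle\cdot,\cdot\rangle$; and that it commutes with the representation $\lm$ of $\G\times\I$. All three rest on the algebraic identities $\f\xi=0$, $\eta\circ\f=0$, $\f^2=-\Id+\eta\otimes\xi$ (whence $\eta(\f^2\cdot)=0$ and $\f^4=-\f^2$), the skew-symmetry $T(x,y,z)=-T(y,x,z)$, and the characterization of $\W_3$ from \corref{cor-W1234}, namely $T(x,y,\xi)=0$ and $T(\f^2 x,\f^2 y,z)=0$, together with the defining relation \eqref{W1234}.

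First I would check that $L_{3,k}(T)\in\W_3$. The bracket $\{\cdot\}_{[x\leftrightarrow y]}$ already makes $L_{3,k}(T)$ skew-symmetric in $x,y$, so it suffices to verify the two conditions of \corref{cor-W1234}. Putting $z=\xi$ annihilates both operators, since $\f\xi=0$ for $L_{3,0}$ and $\f^2\xi=0$ for $L_{3,1}$, giving $L_{3,k}(T)(x,y,\xi)=0$; replacing $x,y$ by $\f^2 x,\f^2 y$ annihilates them because $\eta(\f^2\cdot)=0$, giving $L_{3,k}(T)(\f^2 x,\f^2 y,z)=0$. Thus each $L_{3,k}$ is an endomorphism of $\W_3$. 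For the involution, applying $L_{3,0}$ twice and using $\eta(\xi)=1$, $\eta(\f y)=0$, $\f\xi=0$ yields $L_{3,0}\circ L_{3,0}(T)(x,y,z)=\eta(x)T(\xi,\f^2 y,\f^2 z)-\eta(y)T(\xi,\f^2 x,\f^2 z)$; rewriting the second term by skew-symmetry as $\eta(y)T(\f^2 x,\xi,\f^2 z)$ and invoking \eqref{W1234} recovers exactly $T(x,y,z)$. For $L_{3,1}$ the same scheme works, now using $\eta(\f^2 z)=0$ and $\f^4=-\f^2$ so that the two sign reversals cancel, after which skew-symmetry and \eqref{W1234} again close the computation; hence $L_{3,k}\circ L_{3,k}=\Id_{\W_3}$.

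It remains to treat the invariance and the isometry. The invariance is immediate: every $a\in\G\times\I$ commutes with $\f$ and fixes the structure, so $a^{-1}\xi=\xi$, $\eta\circ a^{-1}=\eta$ and $\f a^{-1}=a^{-1}\f$; substituting these into $L_{3,k}\bigl((\lm a)T\bigr)$ reproduces $(\lm a)\bigl(L_{3,k}(T)\bigr)$ term by term. For the isometry I would use that $\f$ is self-adjoint for $g$ (which follows from the symmetry of $\tg$, since $g(x,\f y)+\eta(x)\eta(y)=\tg(x,y)=\tg(y,x)=g(y,\f x)+\eta(x)\eta(y)$) and acts on the horizontal distribution $H=\ker\eta$ as an anti-isometry, $g(\f u,\f w)=-g(u,w)$ for $u,w\in H$. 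Since the only surviving components of $T\in\W_3$ are $T(\xi,u,w)$ with $u,w$ horizontal, $L_{3,0}$ applies $\f$ to the two horizontal slots and $L_{3,1}$ applies $\f^2$ to them (with a transposition), so each contraction in $\langle L_{3,k}(T_1),L_{3,k}(T_2)\rangle$ picks up two sign changes that cancel, returning $\langle T_1,T_2\rangle$.

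The hard part will be precisely this last step: bookkeeping the full contractions against $g^{ij}$, in particular the slot transposition present in $L_{3,1}$ and the cross terms produced by the two summands of each $L_{3,k}(T)$. A cleaner route, which I would prefer, is to note that once $L_{3,k}\circ L_{3,k}=\Id_{\W_3}$ is established it suffices to prove $L_{3,k}$ is self-adjoint, $\langle L_{3,k}(T_1),T_2\rangle=\langle T_1,L_{3,k}(T_2)\rangle$, and then conclude $\langle L_{3,k}(T_1),L_{3,k}(T_2)\rangle=\langle T_1,L_{3,k}\circ L_{3,k}(T_2)\rangle=\langle T_1,T_2\rangle$. Either way, the compatibility relation $g(\f x,\f y)=-g(x,y)+\eta(x)\eta(y)$ controls every factor, and the restriction to horizontal arguments removes the $\eta\otimes\eta$ correction, so no genuine difficulty survives.
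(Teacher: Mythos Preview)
Your verification is correct and follows exactly the pattern the paper establishes for the analogous operators $L_{1,0}$, $L_{1,1}$, $L_{1,2}$ and $L_{2,0}$: the paper states this lemma without proof, treating it as a routine direct check, and your argument---checking that $L_{3,k}$ preserves the defining conditions of $\W_3$, composing twice and using \eqref{W1234} together with $\f^4=-\f^2$ to recover the identity, invoking that the structure group commutes with $\f,\xi,\eta$, and deducing the isometry from self-adjointness plus involutivity---supplies precisely the details the authors omit.
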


By virtue of their action, we obtain consecutively the
corresponding invariant and orthogonal eigenspaces:
\begin{gather}
\begin{array}{rl}
\W_3^-=\left\{T\in \W_3\ \vert\ L_{3,0}(T)=-T\right\},\quad
&\W_3^+=\left\{T\in \W_3\ \vert\ L_{3,0}(T)=T\right\},\nonumber
\\[4pt]
\W_{3,1/3,2}=\left\{T\in \W_3^-\ \vert\ L_{3,1}(T)=\pm
T\right\},\quad &\W_{3,3/3,4}=\left\{T\in \W_3^+\ \vert\
L_{3,1}(T)=\pm T\right\}.\nonumber
\end{array}
\end{gather}

In such a way, we get
\begin{theorem}\label{thm-T3k}
The decomposition $
\W_3=\W_{3,1}\oplus\W_{3,2}\oplus\W_{3,3}\oplus\W_{3,4} $ is
orthogonal and in\-va\-riant with respect to the structure group.
\end{theorem}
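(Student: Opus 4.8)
The plan is to reuse the spectral mechanism already exploited for $\W_1$ (see \lemref{lem-L10} and \thmref{thm-T1k}): whenever an operator $L$ on a subspace is an involutive isometry commuting with the representation $\lm$ of $\G\times\I$, it splits its domain orthogonally and invariantly into the $+1$ and $-1$ eigenspaces. Indeed, $L\circ L=\Id$ together with $\langle LT_1,LT_2\rangle=\langle T_1,T_2\rangle$ yields $\langle LT_1,T_2\rangle=\langle LT_1,L(LT_2)\rangle=\langle T_1,LT_2\rangle$, so $L$ is self-adjoint and its $\pm 1$ eigenspaces are mutually orthogonal, while $L\circ(\lm a)=(\lm a)\circ L$ makes them invariant. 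First I would apply this to $L_{3,0}$, which by the preceding lemma is such an operator on $\W_3$; it produces the orthogonal, invariant decomposition $\W_3=\W_3^+\oplus\W_3^-$.

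To decompose $\W_3^-$ and $\W_3^+$ further by means of $L_{3,1}$, I must check that $L_{3,1}$ restricts to each of them, that is $L_{3,1}\bigl(\W_3^\pm\bigr)\subseteq\W_3^\pm$. This is the heart of the argument and holds exactly when $L_{3,0}$ and $L_{3,1}$ commute: if $L_{3,0}T=-T$ then $L_{3,0}\bigl(L_{3,1}T\bigr)=L_{3,1}\bigl(L_{3,0}T\bigr)=-L_{3,1}T$, so $L_{3,1}T\in\W_3^-$, and likewise for $\W_3^+$. I expect the verification of $L_{3,0}\circ L_{3,1}=L_{3,1}\circ L_{3,0}$ to be the main obstacle; it is a direct computation from the explicit formulas for the two operators, carried out with the structural identities $\eta(\xi)=1$, $\eta\circ\f=0$, $\f\xi=0$ and $\f^2=-\Id+\eta\otimes\xi$, together with the defining relations of $\W_3$ in \eqref{W1234}.

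Once commutativity is established, the restriction of $L_{3,1}$ to $\W_3^-$ (respectively $\W_3^+$) is again an involutive isometry commuting with the group, so the same mechanism gives the orthogonal, invariant splittings $\W_3^-=\W_{3,1}\oplus\W_{3,2}$ and $\W_3^+=\W_{3,3}\oplus\W_{3,4}$. Since orthogonality and invariance are inherited through the nested decomposition --- a subspace that is orthogonal and invariant inside $\W_3^\pm$, which is itself orthogonal and invariant inside $\W_3$ --- assembling the four pieces yields the asserted orthogonal and $\G\times\I$-invariant decomposition $\W_3=\W_{3,1}\oplus\W_{3,2}\oplus\W_{3,3}\oplus\W_{3,4}$.
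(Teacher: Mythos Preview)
Your proposal is correct and follows essentially the same route as the paper, which likewise splits $\W_3$ first by the involutive isometry $L_{3,0}$ into $\W_3^{\pm}$ and then refines each eigenspace via $L_{3,1}$. Your explicit isolation of the commutativity $L_{3,0}\circ L_{3,1}=L_{3,1}\circ L_{3,0}$ as the step guaranteeing that $L_{3,1}$ restricts to $\W_3^{\pm}$ merely makes explicit what the paper leaves implicit in its terse ``by virtue of their action'' passage.
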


\begin{proposition}\label{prop-T3k}
The subspaces of $\W_3$ are determined by$:$
\[
\begin{split}
\W_{3,1/3,2}:\quad &T(x,y,z)=\left\{\eta(x)T(\xi,\f^2 y,\f^2
z)\right\}_{[x\leftrightarrow y]},\quad\\[4pt]
                &T(\xi,y,z)=\pm T(\xi,z,y)=-T(\xi,\f y,\f z); \\[4pt]
\W_{3,3/3,4}:\quad &T(x,y,z)=\left\{\eta(x)T(\xi,\f^2 y,\f^2
z)\right\}_{[x\leftrightarrow y]},\\[4pt]
                &T(\xi,y,z)=\pm T(\xi,z,y)=T(\xi,\f y,\f z).
\end{split}
\]
\end{proposition}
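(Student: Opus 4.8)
The plan is to exploit the fact that every $T\in\W_3$ is completely recovered from its \emph{core}, the bilinear form $S(y,z):=T(\xi,y,z)$ on the contact distribution $H=\ker(\eta)$. Indeed, the $\W_3$-characterization in \eqref{W1234}, namely $T(x,y,z)=\eta(x)T(\xi,\f^2 y,\f^2 z)+\eta(y)T(\f^2 x,\xi,\f^2 z)$, rewrites by the skew-symmetry $T(\f^2 x,\xi,\f^2 z)=-T(\xi,\f^2 x,\f^2 z)$ in the first two arguments into $T(x,y,z)=\left\{\eta(x)T(\xi,\f^2 y,\f^2 z)\right\}_{[x\leftrightarrow y]}$. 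This is precisely the first displayed equation in both $\W_{3,1/3,2}$ and $\W_{3,3/3,4}$, and since it holds on all of $\W_3$ it is automatically inherited by each of the four subspaces. Thus the only content to establish is the second line of each characterization, and the whole argument reduces to identifying the defining eigenvalue conditions on the core $S$.

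I would next evaluate the two operators at arguments $(\xi,y,z)$, which by the display above determine $T$ entirely. Using $\eta(\xi)=1$, $\eta\circ\f=0$, $\eta(y)=0$ for $y\in H$, and $\f^2=-\Id$ on $H$, I compute $L_{3,0}(T)(\xi,y,z)=T(\xi,\f y,\f z)$ and $L_{3,1}(T)(\xi,y,z)=T(\xi,z,y)$, the $\eta(y)$-terms in the skew-symmetrizations dropping out. Reading off the membership conditions then gives the result: $T\in\W_3^-$, i.e. $L_{3,0}(T)=-T$, is equivalent to $T(\xi,y,z)=-T(\xi,\f y,\f z)$, while $T\in\W_3^+$ gives the $+$ sign; and $L_{3,1}(T)=\pm T$ is equivalent to $T(\xi,y,z)=\pm T(\xi,z,y)$. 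Combining these two pairs of conditions produces exactly the stated second lines for $\W_{3,1/3,2}$ (inside $\W_3^-$) and $\W_{3,3/3,4}$ (inside $\W_3^+$).

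For the converse direction I would note that $L_{3,0}$ and $L_{3,1}$ map $\W_3$ into itself, so each eigenvalue equation is an identity between two tensors of $\W_3$; because such a tensor is determined by its core, the equation is equivalent to the corresponding equality of cores evaluated at $(\xi,y,z)$. Hence the core conditions derived above are not merely necessary but also sufficient, and the four subspaces are characterized as claimed. The computation carries no analytic difficulty; the only point demanding care is the bookkeeping of signs --- keeping the two $\pm$ labels consistent (the first tied to $L_{3,1}$, the second to the $\W_3^\pm$ splitting via $L_{3,0}$) and checking that the $\f^2$-factors collapse correctly to $-\Id$ on $H$ while annihilating the vertical part. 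This sign-tracking, rather than any substantive obstacle, is the main thing to get right.
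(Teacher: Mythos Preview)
Your proposal is correct and follows the same direct approach the paper implicitly uses (the paper states \propref{prop-T3k} without proof, treating it as an immediate consequence of the eigenspace definitions, just as it does for \propref{prop-T1k}). One small remark: you do not actually need to restrict to $y\in H$ when evaluating $L_{3,0}$ and $L_{3,1}$ at $(\xi,y,z)$ --- the $\eta(y)$-terms vanish automatically because they multiply factors involving $\f\xi=0$ or $\f^2\xi=0$, so the core identities hold for arbitrary $y,z\in V$, matching the proposition's statement without any extension step.
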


By virtue of \corref{cor-t-W1} (iii), \propref{prop-T3k} and
\eqref{t} we obtain
\begin{corollary}\label{cor-t-T3k}
The torsion forms $t$ and $t^*$ of $T$ %$:$\\[4pt]
%    $($i$)$
are zero in $\W_{3,k}\subset\W_{3}$ $(k=2,3,4)$.%; \quad$
%    $($ii$)$ have no extra properties
%in $\W_{3,1}\subset\W_{3}$.
\end{corollary}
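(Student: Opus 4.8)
The plan is to reduce the whole statement to the two scalars $t(\xi)$ and $t^*(\xi)$. Indeed, \corref{cor-t-W1}~(iii) already asserts that for every $T\in\W_3$ one has $t\circ h=t^*\circ h=\hat t=0$. Using the decomposition $x=h(x)+\eta(x)\xi$ together with $\hat t(\xi)=0$ (which holds identically), the forms collapse to $t(x)=\eta(x)\,t(\xi)$ and $t^*(x)=\eta(x)\,t^*(\xi)$. Hence the corollary is equivalent to the vanishing of $t(\xi)$ and $t^*(\xi)$ in each of $\W_{3,2}$, $\W_{3,3}$, $\W_{3,4}$, and these are exactly the quantities I would compute from \eqref{t}.

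Next I would encode the relevant data in the bilinear form $A(y,z)=T(\xi,y,z)$ on the contact distribution $H=\ker(\eta)$. From \propref{prop-T3k} the three subspaces are distinguished precisely by the symmetry of $A$ in $(y,z)$ (symmetric in $\W_{3,3}$, skew in $\W_{3,2}$ and $\W_{3,4}$) and by its behaviour under $\f$, namely $A(\f y,\f z)=-A(y,z)$ in $\W_{3,2}$ and $A(\f y,\f z)=A(y,z)$ in $\W_{3,3}$, $\W_{3,4}$. Since $g$ restricts to a nondegenerate (Norden) metric on $H$ and $\f$ is $g$-symmetric, i.e. $g(\f u,v)=g(u,\f v)$ (which follows from the symmetry of the associated metric $\tg$), I would introduce the endomorphism $\hat A$ of $H$ defined by $A(y,z)=g(\hat A y,z)$ and rewrite the traces in \eqref{t} as $t(\xi)=g^{ij}A(e_i,e_j)=\mathrm{tr}\,\hat A$ and $t^*(\xi)=g^{ij}A(e_i,\f e_j)=\mathrm{tr}(\f\hat A)$, the symmetry of $g^{ij}$ and of $\f$ being used for the second identity. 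Under this dictionary, symmetry/skewness of $A$ becomes self-/skew-adjointness of $\hat A$, while $A(\f\cdot,\f\cdot)=-A$ and $A(\f\cdot,\f\cdot)=A$ become, via $\f^2=-\Id$ on $H$, commutation and anticommutation of $\hat A$ with the (invertible) $\f$, respectively.

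The conclusion is then a matter of trace identities. In $\W_{3,2}$, $\hat A$ is skew-adjoint and commutes with $\f$, so $\mathrm{tr}\,\hat A=0$ and $\f\hat A$ is again skew-adjoint, whence $\mathrm{tr}(\f\hat A)=0$. In $\W_{3,3}$ and $\W_{3,4}$, $\hat A$ anticommutes with $\f$, so $\f\hat A\f^{-1}=-\hat A$ forces $\mathrm{tr}\,\hat A=0$, and cyclicity of the trace gives $\mathrm{tr}(\f\hat A)=\mathrm{tr}(\hat A\f)=-\mathrm{tr}(\f\hat A)=0$. The step I expect to be the only genuinely delicate one is $t^*(\xi)=0$ in $\W_{3,4}$: there $\hat A$ is skew-adjoint but $\f\hat A$ turns out to be \emph{self}-adjoint, so the naive ``trace of a skew-adjoint operator vanishes'' argument fails and one must instead invoke the anticommutation with $\f$ through cyclicity; the companion point $t(\xi)=0$ in $\W_{3,3}$, where $\hat A$ is self-adjoint and a priori need not be traceless, is handled the same way by conjugation with $\f$. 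As a final consistency check one sees why $k=1$ is excluded: in $\W_{3,1}$ the form $A$ is symmetric and $\f$-anti-invariant, so $\hat A$ is self-adjoint and commutes with $\f$, and then neither $\mathrm{tr}\,\hat A$ nor $\mathrm{tr}(\f\hat A)$ is forced to vanish.
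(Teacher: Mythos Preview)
Your proof is correct and follows essentially the same route as the paper, which simply cites \corref{cor-t-W1}~(iii), \propref{prop-T3k} and \eqref{t} without further detail. Your endomorphism/trace formulation is a clean way to carry out exactly that computation, and your observation about why $k=1$ is excluded is a nice sanity check.
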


Let us remark that $\W_{3,1}$ can be additionally decomposed to
three subspaces determined by conditions $t=0$, $t^*=0$ and
$t=t^*=0$, respectively, \ie
\begin{equation*}\label{W31}
%\begin{array}{c}
\W_{3,1}=\W_{3,1,1}\oplus \W_{3,1,2}\oplus \W_{3,1,3},
%\end{array}
\end{equation*}
where
\begin{equation*}\label{W311W312W313}
\begin{array}{c}
\W_{3,1,1}=\left\{T\in \W_{3,1}\ \vert\ t\neq 0,\
t^*=0\right\},\qquad
\W_{3,1,2}=\left\{T\in \W_{3,1}\ \vert\ t=0,\ t^*\neq 0\right\}, \\[4pt]
\W_{3,1,3}=\left\{T\in \W_{3,1}\ \vert\ t=0,\ t^*=0\right\}.
\end{array}
\end{equation*}

\begin{proposition}\label{prop-p_3k}
Let $T\in\T$ and $p_{3,k}$ $(k=1,2,3,4)$ be the projection
operators of $\T$ in $\W_{3,k}$, generated by the decomposition
above. Then we have
\[
\begin{array}{l}
p_{3,k}(T)(x,y,z)=
                \frac{1}{4}\left\{\eta(x)A_{3,k}(y,z)
-\eta(y)A_{3,k}(x,z)\right\},
\end{array}
\]
where
\[
\begin{array}{c}
A_{3,1/3,2}(y,z)=T(\xi,\f^2 y,\f^2 z)
                \pm T(\xi,\f^2 z,\f^2 y)
                -T(\xi,\f y,\f z)
                \mp T(\xi,\f z,\f y),
\\[4pt]
                A_{3,3/3,4}(y,z)=T(\xi,\f^2 y,\f^2 z)
                \pm T(\xi,\f^2 z,\f^2 y)
                +T(\xi,\f y,\f z)
                \pm T(\xi,\f z,\f y).
\end{array}
\]
\end{proposition}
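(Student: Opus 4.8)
The plan is to realize each projector $p_{3,k}$ as a composition of the eigenprojections of the two commuting involutions $L_{3,0}$ and $L_{3,1}$, exactly as $p_{1,1}$ was recovered from $L_{1,0}$ and $L_{1,1}$ in the proof of \propref{prop-p_1i}. First I would check that $L_{3,0}$ and $L_{3,1}$ commute on $\W_3$; granting this, $\frac{1}{2}(\Id\mp L_{3,0})$ is the projection of $\W_3$ onto $\W_3^\mp$, while $L_{3,1}$ preserves each $L_{3,0}$-eigenspace and restricts there to an involution whose $\pm$-eigenprojection is $\frac{1}{2}(\Id\pm L_{3,1})$. Composing these, I obtain
\[
p_{3,1/3,2}=\tfrac{1}{4}(\Id-L_{3,0})(\Id\pm L_{3,1}),\qquad
p_{3,3/3,4}=\tfrac{1}{4}(\Id+L_{3,0})(\Id\pm L_{3,1}),
\]
four mutually orthogonal idempotents whose sum is $\Id_{\W_3}$.

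Next I would expand these products and evaluate them on the image $p_3(T)\in\W_3$ of an arbitrary $T\in\T$, recalling that $T\in\W_3$ is realized as $p_3(T)$. The bookkeeping is drastically shortened by the algebraic identities $\f^3=-\f$, $\f^4=-\f^2$, $\eta\circ\f=\eta\circ\f^2=0$ together with $\f\xi=0$ and $\eta(\xi)=1$, all consequences of $\f^2=-\Id+\eta\otimes\xi$. Their effect is that every iterated application of $L_{3,0}$ and $L_{3,1}$ to $p_3(T)$ collapses to a single $T(\xi,\cdot,\cdot)$-term: for example $p_3(T)(\xi,\f y,\f z)=T(\xi,\f y,\f z)$ and $p_3(T)(\xi,\f^2 z,\f^2 y)=T(\xi,\f^2 z,\f^2 y)$. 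Consequently the identity contributes $T(\xi,\f^2 y,\f^2 z)$, the operator $L_{3,1}$ contributes $T(\xi,\f^2 z,\f^2 y)$, the operator $L_{3,0}$ contributes $T(\xi,\f y,\f z)$, and $L_{3,0}L_{3,1}$ contributes $T(\xi,\f z,\f y)$, each together with its $[x\leftrightarrow y]$-skew counterpart. Collecting the four contributions with the signs prescribed by the expansions above and keeping the common factor $\tfrac14$ produces exactly $\tfrac14\left\{\eta(x)A_{3,k}(y,z)\right\}_{[x\leftrightarrow y]}$ with the tensors $A_{3,k}$ as in the statement.

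The only genuinely delicate point is the commutativity of $L_{3,0}$ and $L_{3,1}$, for without it the four operators displayed above would not be eigenprojections. I expect to settle it by the same collapse computation, verifying that $L_{3,0}L_{3,1}(p_3(T))$ and $L_{3,1}L_{3,0}(p_3(T))$ both reduce to $\left\{\eta(x)T(\xi,\f z,\f y)\right\}_{[x\leftrightarrow y]}$. Everything else is routine tracking of signs, after which I would confirm $p_{3,k}\circ p_{3,k}=p_{3,k}$ and $\sum_{k}p_{3,k}=\Id_{\W_3}$ to close the argument, in parallel with the concluding verification of \propref{prop-p_1i}.
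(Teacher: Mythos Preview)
Your proposal is correct and follows exactly the template the paper uses for \propref{prop-p_1i}: writing $p_{3,k}=\tfrac14(\Id\mp L_{3,0})(\Id\pm L_{3,1})$ on $\W_3$, verifying that the two involutions commute (both $L_{3,0}L_{3,1}$ and $L_{3,1}L_{3,0}$ send $p_3(T)$ to $\{\eta(x)T(\xi,\f z,\f y)\}_{[x\leftrightarrow y]}$ via $\f^3=-\f$, $\f^4=-\f^2$), and then reading off the four terms of $A_{3,k}$ from the expansions. The paper states this proposition without proof, but your argument is precisely the one intended by the phrase ``generated by the decomposition above'' and by analogy with the displayed proof of \propref{prop-p_1i}.
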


\subsection{The subspace $\W_4$}
Finally, we only denote $\W_4$ as $\W_{4,1}$ and it is determined
as follows
\begin{equation*}%\label{T41}%
\W_{4,1}:\quad
T(x,y,z)=\eta(z)\left\{\eta(y)\hat{t}(x)-\eta(x)\hat{t}(y)\right\}.
\end{equation*}
Obviously, the projection operator $p_{4,1}: \T
\rightarrow\W_{4,1}$ has the form
\begin{equation}\label{p_41}
p_{4,1}(T)(x,y,z)=\eta(z)\left\{\eta(y)\hat{t}(x)-\eta(x)\hat{t}(y)\right\}.
\end{equation}

\subsection{The fifteen subspaces of $\T$}
In conclusion of the decomposition explained above, we combine
Theorems \ref{thm-W1234}, \ref{thm-T1k}, \ref{thm-T2k} and
\ref{thm-T3k}. We denote the subspaces $\W_{i,j}$ and $\W_{i,j,k}$
by $\T_{s}, s\in\{1,2,\dots,15\}$ as follows:
\begin{equation}\label{Ti}
\begin{array}{lllll}
\T_{1}=\W_{1,1,1},\quad &\T_{2}=\W_{1,1,2},\quad
&\T_{3}=\W_{1,2},\quad &\T_{4}=\W_{1,3,1},\quad
&\T_{5}=\W_{1,3,2},\\[4pt]%
\T_{6}=\W_{1,4},\quad &\T_{7}=\W_{2,1},\quad
&\T_{8}=\W_{2,2},\quad &\T_{9}=\W_{3,1,1},\quad &\T_{10}=\W_{3,1,2},\\[4pt]%
\T_{11}=\W_{3,1,3},\quad &\T_{12}=\W_{3,2},\quad
&\T_{13}=\W_{3,3},\quad &\T_{14}=\W_{3,4},\quad &\T_{15}=\W_{4,1}.
\end{array}
\end{equation}
We obtain the following main statement in the present paper
\begin{theorem}\label{thm-15podpr}
Let $\T$ is the vector space of the torsion tensors of type (0,3)
over the vector space $V$ with almost contact $B$-matric structure
$(\f,\xi,\eta,g)$. The decomposition
\begin{equation}\label{TT-15podpr}
\T=\T_{1}\oplus\T_{2}\oplus\cdots\oplus\T_{15}
\end{equation}
is orthogonal and invariant with respect to the structure group
$\G\times\I$.
\end{theorem}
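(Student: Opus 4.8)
The plan is to obtain \thmref{thm-15podpr} by consolidating the decompositions already in hand, the only genuinely new ingredient being the trace refinements. First I would combine \thmref{thm-W1234} with \thmref{thm-T1k}, \thmref{thm-T2k}, \thmref{thm-T3k} and the trivial identification $\W_4=\W_{4,1}$. Substituting the refinements of $\W_1$, $\W_2$, $\W_3$ into the top-level sum gives the eleven-term decomposition
\[
\T=\W_{1,1}\oplus\W_{1,2}\oplus\W_{1,3}\oplus\W_{1,4}\oplus\W_{2,1}\oplus\W_{2,2}\oplus\W_{3,1}\oplus\W_{3,2}\oplus\W_{3,3}\oplus\W_{3,4}\oplus\W_{4,1}.
\]
To see it is orthogonal and invariant I would invoke the elementary nesting principle: if $U=\bigoplus_i U_i$ is orthogonal and each $U_i=\bigoplus_j U_{i,j}$ is orthogonal inside $U_i$, then all the $U_{i,j}$ are pairwise orthogonal (for different $i$ because $U_i\perp U_{i'}$, for equal $i$ by hypothesis), and each $U_{i,j}$ is $\G\times\I$-invariant because it is so as a summand of $U_i$. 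Applied twice, this turns the four labelled theorems into the above eleven-term orthogonal invariant splitting.

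Next I would treat the four extra splittings introduced in the remarks, namely $\W_{1,1}=\W_{1,1,1}\oplus\W_{1,1,2}$, $\W_{1,3}=\W_{1,3,1}\oplus\W_{1,3,2}$ and $\W_{3,1}=\W_{3,1,1}\oplus\W_{3,1,2}\oplus\W_{3,1,3}$, which are cut out by the vanishing of the torsion forms $t$ and $t^*$ of \eqref{t}. The key point is that $T\mapsto t$ and $T\mapsto t^*$ are $\G\times\I$-equivariant linear maps from $\T$ into the space of $1$-forms, since they are metric contractions and the representation $\lm$ preserves $\langle\cdot,\cdot\rangle$. Hence, restricted to $\W_{1,1}$, $\W_{1,3}$ and $\W_{3,1}$, their kernels are invariant subspaces; the trace-zero summands $\W_{1,1,2}$, $\W_{1,3,2}$, $\W_{3,1,3}$ are precisely these kernels (for $\W_{3,1}$ the intersection $\ker t\cap\ker t^*$), while the non-trace summands are the orthogonal complements inside the respective subspace. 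Because $\lm$ acts by isometries, the orthogonal complement of an invariant subspace is again invariant; this yields orthogonal invariant splittings, and in $\W_{3,1}$ the two independent forms split the complement further into the $t^*=0$ piece $\W_{3,1,1}$ and the $t=0$ piece $\W_{3,1,2}$.

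Finally, substituting these refinements and relabelling through \eqref{Ti} produces \eqref{TT-15podpr}, with orthogonality and invariance propagated by the same nesting principle. I expect the only delicate step to be the trace refinement: one must confirm that on $\W_{3,1}$ the forms $t$ and $t^*$ are genuinely independent, so the prescribed conditions really carve out three nonzero orthogonal invariant pieces rather than collapsing, and one must read the remark's informal ``$t\neq0$'' as ``the orthogonal complement of the trace-zero subspace.'' Everything else is bookkeeping resting on the lemmas and theorems already proved.
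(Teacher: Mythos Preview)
Your proposal is correct and mirrors the paper's approach: the paper simply says ``we combine Theorems \ref{thm-W1234}, \ref{thm-T1k}, \ref{thm-T2k} and \ref{thm-T3k}'' and then states the theorem without further argument, so your nesting principle is exactly the intended (and only) content. In fact you are more careful than the paper on the one nontrivial point: the trace refinements \eqref{W11W13} and the splitting of $\W_{3,1}$ are introduced in the paper only as remarks, with the subspaces described informally by conditions like ``$t\neq0$'', and no verification of orthogonality or invariance is given there; your reading of these as orthogonal complements of the trace-zero kernels, together with the equivariance of $T\mapsto t$ and $T\mapsto t^*$ under $\lm$, supplies precisely the missing justification.
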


In the following section we discuss three known natural
connections with torsion on $\M$. Natural connections are a
generalization of the Levi-Civita connection.

\section{Known Natural Connections in the Introduced Classification}\label{sec:3}

Let $\M$ be an almost contact B-metric manifold. The tangent space
$T_pM$ at an arbitrary point $p$ in $M$ is a vector space equipped
with an almost contact B-metric structure.

It is well known that any metric connection $D$ (i.e. $Dg=0$) is
completely determined by its torsion tensor $T$ with
\begin{equation}\label{Hay}
2g\left(D_xy-\n_xy,z\right)=T(x,y,z)-T(y,z,x)+T(z,x,y).
\end{equation}
Then the subspace $\T_{s}$ $(s=1,2,\dots,15)$, where $T$ belongs,
is an important characteristic of $D$. In such a way the
conditions for $T$ described as the subspace $\T_{s}$ give rise to
the corresponding class of the connection with respect to its
torsion tensor.

%Let us remark that an arbitrary metric connection with certain
%properties with respect to the almost contact B-metric structure,
%can be described in the frame of the classification
%\eqref{TT-15podpr} regarding the corresponding torsion tensor.

A metric connection $D$ is called a \emph{natural connection} on
$\M$ if the almost contact structure $(\f,\xi,\eta)$ as well as
the B-metric $g$ (consequently also $\tg$) are parallel regarding
it, \ie $D\f=D\xi=D\eta=Dg=D\tg=0$. Therefore, an arbitrary
natural connection $D$ on $\M\notin\F_0$ plays the same role like
$\n$ on $\M\in\F_0$. Obviously, $D$ and $\n$ coincide when
$\M\in\F_0$. Because of that, we are interested in natural
connections on $\M\notin\F_0$.

\begin{theorem}\label{thm-nat}
A linear connection $D$ is natural on $\M$ if and only if
$D\f=Dg=0$.
\end{theorem}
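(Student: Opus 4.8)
The statement to prove is \thmref{thm-nat}: a metric connection $D$ (with $Dg=0$) is natural on $\M$ if and only if $D\f=0$ — that is, the conditions $D\xi=D\eta=0$ and $D\tg=0$ come for free once $D\f=0$ and $Dg=0$ are assumed. The forward direction is trivial by the definition of natural connection, so all the content is in the reverse implication. My plan is to deduce $D\xi$, $D\eta$ and $D\tg$ from $D\f=0$ and $Dg=0$ by differentiating each of the defining algebraic relations of the almost contact B-metric structure.

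\textbf{Deriving $D\xi=0$ and $D\eta=0$.}
First I would use the relation $\f^2=-\Id+\eta\otimes\xi$. Applying $D_x$ to both sides and using $D_x\f=0$ (so $D_x(\f^2)=0$ as well, since $D_x(\f\f Y)=(D_x\f)\f Y+\f(D_x\f)Y=0$), one gets
\[
0=\left(D_x\eta\right)(Y)\,\xi+\eta(Y)\,D_x\xi .
\]
Contracting this identity with $\eta$ — that is, evaluating $\eta$ on the $\xi$-valued right-hand side and using $\eta(\xi)=1$ — and recalling that differentiating $\eta(\xi)=1$ gives $\left(D_x\eta\right)(\xi)+\eta(D_x\xi)=0$, I expect to separate the two terms and conclude first $\left(D_x\eta\right)(Y)=0$ for all $Y$ and then $\eta(Y)\,D_x\xi=0$, hence $D\eta=0$ and $D\xi=0$. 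The key auxiliary facts here are $\eta(\xi)=1$ and the compatibility $\eta(\cdot)=g(\cdot,\xi)$, which follows from $g(\f X,\f Y)=-g(X,Y)+\eta(X)\eta(Y)$ by setting $Y=\xi$ (using $\f\xi=0$); this lets me move freely between $\eta$ and $g(\cdot,\xi)$ while differentiating.

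\textbf{Deriving $D\tg=0$, and the main obstacle.}
Finally, since $\tg(X,Y)=g(X,\f Y)+\eta(X)\eta(Y)$, I would differentiate directly:
\[
\left(D_x\tg\right)(Y,Z)=\left(D_xg\right)(Y,\f Z)+g\bigl(Y,(D_x\f)Z\bigr)+\left(D_x\eta\right)(Y)\,\eta(Z)+\eta(Y)\left(D_x\eta\right)(Z),
\]
and every term on the right vanishes by $Dg=0$, $D\f=0$, and the already-established $D\eta=0$. This step is purely mechanical once the previous ones are in place. I expect the genuine obstacle to be the $\xi$/$\eta$ step: the identity obtained from differentiating $\f^2=-\Id+\eta\otimes\xi$ mixes an $\eta$-term and a $\xi$-term, and one must argue carefully — via a contraction with $\eta$ and the normalization $\eta(\xi)=1$ — that each vanishes separately rather than merely in combination. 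Handling that splitting cleanly, while keeping track of the relation $\eta=g(\cdot,\xi)$, is the crux; the rest is routine differentiation of the structural identities.
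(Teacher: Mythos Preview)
Your proposal is correct, but it takes a somewhat different route from the paper's own proof.

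The paper works through the potential tensor $Q(x,y,z)=g(D_xy-\n_xy,z)$ and invokes a characterization from \cite{Man31}: $D$ is natural if and only if $Q(x,y,\f z)-Q(x,\f y,z)=F(x,y,z)$ and $Q(x,y,z)=-Q(x,z,y)$, these being equivalent respectively to $D\f=0$ and $Dg=0$. It then observes that $D\xi=0$ is equivalent to $Q(x,\xi,z)=-F(x,\xi,\f z)$, which drops out of the first identity (after substituting $y=\xi$ and replacing $z$ by $\f z$, using also $Q(x,\xi,\xi)=0$ from the skew-symmetry in the last two slots). Finally $D\eta=0$ comes from $\eta=g(\cdot,\xi)$ together with $Dg=0$ and $D\xi=0$; the vanishing of $D\tg$ is left implicit.

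Your argument is more self-contained: you differentiate the structural identity $\f^2=-\Id+\eta\otimes\xi$ directly, pair with $\eta$, and use $\eta=g(\cdot,\xi)$ (hence $(D_x\eta)(Y)=g(Y,D_x\xi)$ once $Dg=0$) to force $\eta(D_x\xi)=0$ and then $D_x\xi=0$, $D_x\eta=0$ separately. This avoids the citation to \cite{Man31} and the $Q$-formalism entirely, which is a genuine simplification for a reader who just wants the theorem. The paper's approach, on the other hand, dovetails with the $Q$-conditions \eqref{1ab} that are used immediately afterwards in \propref{prop-nat}, so it earns its keep there. Your treatment of $D\tg=0$ is also more explicit than the paper's. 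The ``splitting'' step you flag as the crux is indeed where the care is needed, and your outlined use of $\eta=g(\cdot,\xi)$ plus the contraction at $Y=\xi$ does resolve it cleanly.
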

\begin{proof}
It is known, that a linear connection $D$ is a natural connection
on $(M,\f,\xi,\allowbreak\eta,g)$ if and only if
the following properties for $Q(x,y,z)=g\left(D_xy-\n_xy,z\right)$ are valid \cite{Man31}$:$%
\begin{gather}
 Q(x,y,\f z)-Q(x,\f y,z)=F(x,y,z),%\label{1a}
\quad% \\%
 Q(x,y,z)=-Q(x,z,y).\label{1ab}%\label{1b}
\end{gather}
These conditions are equivalent to $D\f=0$ and $Dg=0$,
respectively. Moreover, $D\xi=0$ is equivalent to the relation
$Q(x,\xi,z)=-F(x,\xi,\f z)$, which is  a consequence of the former
equality of \eqref{1ab}. Finally, since
$\eta(\cdot)=g(\cdot,\xi)$, then supposing $Dg=0$ we have $D\xi=0$
if and only if $D\eta=0$. Thus, the statement is truthful.
\end{proof}

%We get the following
\begin{proposition}\label{prop-nat}
Let $D$ be a natural connection with torsion $T$ on an almost
contact B-met\-ric manifold $M$. Then the following implications
hold true:
\[%\begin{equation}\label{FiTj-nat}
\begin{array}{c}
T\in\T_{1}\oplus\T_{2}\oplus\T_{6}\oplus\T_{12}  \Rightarrow  M\in\F_0;\\[4pt]
\begin{array}{llllll}
T\in\T_{3} \Rightarrow   M\in\F_3; \qquad & %
T\in\T_{4} \Rightarrow   M\in\F_1; \qquad & %
T\in\T_{5} \Rightarrow   M\in\F_2; \\[4pt] %
T\in\T_{7} \Rightarrow   M\in\F_7; \qquad &%
T\in\T_{8}  \Rightarrow  M\in\F_8\oplus\F_{10};\qquad &%
T\in\T_{9} \Rightarrow   M\in\F_5; \\[4pt]%
T\in\T_{10} \Rightarrow   M\in\F_4; \qquad &%
T\in\T_{11} \Rightarrow   M\in\F_6; \qquad &%
T\in\T_{13} \Rightarrow   M\in\F_9; \\[4pt]%
T\in\T_{14} \Rightarrow   M\in\F_{10}; \qquad & %
T\in\T_{15} \Rightarrow   M\in\F_{11}. \qquad &  %\\[4pt]
\end{array}
\end{array}
\]%\end{equation}
\end{proposition}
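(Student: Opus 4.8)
The plan is to convert the defining properties of a natural connection into an explicit linear formula expressing the structural tensor $F$ in terms of the torsion $T$, and then to read off the class of $M$ by substituting, block by block, the characterisation of each subspace $\T_s$ into that formula.

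First I would combine \thmref{thm-nat} with \eqref{Hay}. Since a natural connection is metric, \eqref{Hay} gives $Q(x,y,z)=g(D_xy-\n_xy,z)=\frac{1}{2}\{T(x,y,z)-T(y,z,x)+T(z,x,y)\}$, and the metric condition $Q(x,y,z)=-Q(x,z,y)$ of \eqref{1ab} is then automatic from the skew-symmetry of $T$ in its first two arguments. Feeding this into the first relation of \eqref{1ab}, namely $F(x,y,z)=Q(x,y,\f z)-Q(x,\f y,z)$, yields the master identity
\begin{equation*}
F(x,y,z)=\tfrac{1}{2}\bigl\{T(x,y,\f z)-T(y,\f z,x)+T(\f z,x,y)-T(x,\f y,z)+T(\f y,z,x)-T(z,x,\f y)\bigr\}.
\end{equation*}
This exhibits $F=\Phi(T)$ for a fixed linear operator $\Phi$ from $\T$ to the space of $F$-tensors, built purely from $\f$ and tensor operations, hence equivariant under $\G\times\I$.

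Next I would exploit this equivariance: $\Phi$ respects both the decomposition \eqref{TT-15podpr} of $\T$ and the decomposition of the $F$-space into $\F_1,\dots,\F_{11}$, so it suffices to take $T$ in a single block $\T_s$, insert the corresponding symmetry relations from \propref{prop-T1k}, \propref{prop-T2k}, \propref{prop-T3k} (and the description of $\W_{4,1}$) into the master identity, simplify, and verify that the resulting $F$ satisfies the characteristic condition of the claimed class in \eqref{Fi}. For the horizontal blocks inside $\W_1$ one uses $T(\xi,y,z)=T(x,y,\xi)=0$ together with the $\f$-eigenrelations and cyclic conditions of \propref{prop-T1k}; the output is purely horizontal and lands in $\F_1$, $\F_2$ or $\F_3$, giving $\T_4\Rightarrow\F_1$, $\T_5\Rightarrow\F_2$, $\T_3\Rightarrow\F_3$. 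For the blocks meeting $\Span(\xi)$ (those coming from $\W_2$, $\W_3$, $\W_4$) the substitution produces terms carrying $\eta$ and reproduces the forms $\F_4,\dots,\F_{11}$. The four vanishing cases $\T_1,\T_2,\T_6,\T_{12}$ are handled by showing that their symmetry relations force every term of the master identity to cancel, so $F=0$ and $M\in\F_0$.

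The main obstacle is the per-block index computation: one must carry the $\f$-eigenvalue relations and the $\eta$-conditions through the six terms of $\Phi(T)$ and match the result against the normalised expressions in \eqref{Fi}, including the scalar factors $\frac{1}{2n}$ and the cyclic-sum constraints. Particular care is needed for the cases whose image is not a single class, namely $\T_8\Rightarrow\F_8\oplus\F_{10}$, where the symmetric $\xi$-part of $T$ splits into a $\f$-invariant contribution and an $\eta$-type contribution, and the overlap in which $\F_{10}$ is also reached from $\T_{14}$; there the implication is only one-directional, so it suffices to place $F$ inside the stated sum rather than in a single summand.
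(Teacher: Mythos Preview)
Your proposal is correct and follows essentially the same route as the paper: both derive the master identity $2F(x,y,z)=T(x,y,\f z)-T(y,\f z,x)+T(\f z,x,y)-T(x,\f y,z)+T(\f y,z,x)-T(z,x,\f y)$ from \eqref{Hay} and \eqref{1ab}, and then verify the implication block by block by substituting the defining relations of each $\T_s$ and comparing with \eqref{Fi}. Your equivariance remark is a helpful organizing observation but does not shortcut the computation, which is where the content lies in both arguments.
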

\begin{proof}
The implications follow from \eqref{Hay}, \eqref{1ab}, \eqref{Fi},
\eqref{Ti} and the corresponding characteristic conditions of
$\W_{i,j}$ and $\W_{i,j,k}$ as well as the projection operators
$p_{i,j}$. We show the proof in detail for some classes and the
rest follow in a similar way.

By virtue of \eqref{Hay} and \eqref{1ab} we have
\begin{equation}\label{FT}
\begin{array}{l}
2F(x,y,z)=T(x,y,\f z)-T(y,\f z,x)+T(\f z,x,y)\\[4pt]
\phantom{2F(x,y,z)}-T(x,\f y,z)+T(\f y,z,x)-T(z,x,\f y).
\end{array}
\end{equation}

Let us consider $T\in\W_{1,1}=\T_1\oplus\T_2$, which is equivalent
to $T=p_{1,1}(T)$. Then, ac\-cording to \propref{prop-p_1i}, we
have
\[
\begin{array}{l}
T(x,y,z)=-\frac{1}{4}\bigl\{T(\f^2 x,\f^2 y,\f^2 z)-T(\f x,\f
y,\f^2 z)\\[4pt]
\phantom{T(x,y,z)=-\frac{1}{4}\bigl\{}-T(\f x,\f^2 y,\f z)-T(\f^2
x,\f y,\f z)\bigr\},
\end{array}
\]
which together with \eqref{FT} imply $F(x,y,z)=0$. Therefore, we
obtain $M\in\F_0$.

Now, let us suppose $T\in\W_{1,2}=\T_3$ and hence $T=p_{1,2}(T)$,
which has the following form, taking into account
\propref{prop-p_1i}:
\[
\begin{array}{l}
T(x,y,z)=-\frac{1}{4}\bigl\{T(\f^2 x,\f^2 y,\f^2 z)-T(\f x,\f
y,\f^2 z)\\[4pt]
\phantom{T(x,y,z)=-\frac{1}{4}\bigl\{}+T(\f x,\f^2 y,\f z)+T(\f^2
x,\f y,\f z)\bigr\}.
\end{array}
\]
Then, according to the latter equality and \eqref{FT}, we obtain
\begin{equation}\label{FTT}
\begin{array}{l}
F(x,y,z)=-\frac{1}{4}\bigl\{-T(\f^2 x,\f^2 y,\f z)+T(\f x,\f
y,\f z)+T(\f^2 x,\f y,\f^2 z)\\[4pt]
\phantom{F(x,y,z)=-\frac{1}{4}\bigl\{}%
+T(\f x,\f^2 y,\f^2 z)-T(\f z,\f^2 x,\f^2 y)-T(\f^2 z,\f x,\f^2 y)\\[4pt]
\phantom{F(x,y,z)=-\frac{1}{4}\bigl\{}%
 +T(\f^2 z,\f^2 x,\f y)-T(\f z,\f x,\f y)\bigr\}
\end{array}
\end{equation}
and consequently $F(\xi,y,z)=F(x,y,\xi)=0$. Next, we take the
cyclic sum of \eqref{FTT} by the arguments $x,y,z$ and the result
is $\sx F(x,y,z)=0$. Therefore, $M$ belongs to $\F_3$.
\end{proof}

Bearing in mind the class of almost contact B-metric manifolds
with $N=0$ and \propref{prop-nat}, we obtain immediately
\begin{corollary}
An almost contact B-metric manifold $M=\M\in\F_i\setminus\F_0$ is
normal, \ie $N=0$, if the torsion of an arbitrary natural
connection on $M$ belongs to
$\T_{4}\oplus\T_{5}\oplus\T_{9}\oplus\T_{10}\oplus\T_{11}$.
\end{corollary}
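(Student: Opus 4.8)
The plan is to deduce the claim directly from \propref{prop-nat} together with the known description of the normal class. First I would recall the stated fact that the normal almost contact B-metric manifolds, \ie those with $N=0$, form exactly the class $\F_1\oplus\F_2\oplus\F_4\oplus\F_5\oplus\F_6$. The goal is then to show that a torsion lying in $\T_4\oplus\T_5\oplus\T_9\oplus\T_{10}\oplus\T_{11}$ places $M$ into precisely this class.

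I would next isolate from \propref{prop-nat} the five implications that are relevant here, namely $T\in\T_4\Rightarrow M\in\F_1$, $T\in\T_5\Rightarrow M\in\F_2$, $T\in\T_9\Rightarrow M\in\F_5$, $T\in\T_{10}\Rightarrow M\in\F_4$ and $T\in\T_{11}\Rightarrow M\in\F_6$. Together these show that the five target torsion subspaces match, one-to-one, the five summands $\F_1,\F_2,\F_5,\F_4,\F_6$ constituting the normal class.

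The key step is to promote these individual implications to the whole direct sum. Here I would use that, for a natural connection, the fundamental tensor $F$ is recovered from the torsion $T$ by the relation \eqref{FT}, which is linear in $T$; denote this linear assignment by $T\mapsto F$. The computations underlying \propref{prop-nat} in fact establish that this map sends each summand $\T_s$ into the corresponding $\F$-class. Hence, decomposing a torsion $T\in\T_4\oplus\T_5\oplus\T_9\oplus\T_{10}\oplus\T_{11}$ as $T=T_4+T_5+T_9+T_{10}+T_{11}$ with each $T_s\in\T_s$, linearity yields a parallel splitting of $F$, each piece of which obeys the characteristic conditions of the associated $\F_{j}$. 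Therefore $M\in\F_1\oplus\F_2\oplus\F_4\oplus\F_5\oplus\F_6$, and so $N=0$.

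I expect no real obstacle. The only delicate point is the passage from the one-subspace implications to their direct sum, and this is immediate once one observes that \eqref{FT} is linear and that the decomposition of \thmref{thm-15podpr} is invariant and orthogonal; beyond the calculations already contained in the proof of \propref{prop-nat}, nothing further is required.
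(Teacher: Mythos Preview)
Your proposal is correct and follows essentially the same route as the paper: the paper's own argument is a single sentence, deducing the corollary ``immediately'' from \propref{prop-nat} together with the known fact that the normal class is $\F_1\oplus\F_2\oplus\F_4\oplus\F_5\oplus\F_6$. Your write-up simply makes explicit the linearity step (via \eqref{FT}) that passes from the one-subspace implications to their direct sum, which the paper leaves tacit.
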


Similarly, \propref{prop-Nhat=0} and \propref{prop-nat} imply
\begin{corollary}
An almost contact B-metric manifold $M=\M\in\F_i\setminus\F_0$ has
$\widehat{N}=0$, if the torsion of an arbitrary natural connection
on $M$ belongs to $\T_{3}\oplus\T_{7}$.
\end{corollary}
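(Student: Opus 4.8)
The plan is to reduce the assertion to the two propositions already available, \propref{prop-nat} and \propref{prop-Nhat=0}, by exploiting the linearity of the map sending a torsion tensor to the fundamental tensor of a natural connection.

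First I would recall that, for a natural connection $D$, the tensor $F$ is recovered from its torsion $T$ through the relation \eqref{FT} (itself a consequence of \eqref{Hay} and \eqref{1ab}), which is linear in $T$. Hence if $T\in\T_3\oplus\T_7$, decomposing $T=T'+T''$ with $T'\in\T_3$ and $T''\in\T_7$ produces a matching additive splitting $F=F'+F''$, where $F'$ and $F''$ are the fundamental tensors associated with $T'$ and $T''$ respectively.

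Next I would apply \propref{prop-nat}. Its implications $T'\in\T_3\Rightarrow M\in\F_3$ and $T''\in\T_7\Rightarrow M\in\F_7$ tell us that $F'$ satisfies the defining conditions of $\F_3$ and $F''$ those of $\F_7$. Since the basic classes $\F_i$ in \eqref{Fi} are cut out by homogeneous linear conditions on $F$, membership is preserved under sums, so $F=F'+F''$ lies in $\F_3\oplus\F_7$; equivalently $M\in\F_3\oplus\F_7$. Finally, \propref{prop-Nhat=0} identifies $\F_3\oplus\F_7$ as precisely the class of almost contact B-metric manifolds with $\widehat{N}=0$, whence $\widehat{N}=0$, as claimed.

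The argument is essentially bookkeeping, and the single point requiring a little care is the passage from the single-subspace statements of \propref{prop-nat} to the direct sum $\T_3\oplus\T_7$. This step is legitimate solely because \eqref{FT} is linear in $T$ and the classes $\F_3$, $\F_7$ are linear subspaces, so no computation beyond \propref{prop-nat} is needed; the genuine content is carried by \propref{prop-Nhat=0}, which supplies the characterization of the vanishing of $\widehat{N}$.
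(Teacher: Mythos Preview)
Your proposal is correct and follows essentially the same approach as the paper, which simply records that the corollary is implied by \propref{prop-Nhat=0} and \propref{prop-nat} without further comment. Your explicit discussion of the linearity of \eqref{FT} and of the defining conditions in \eqref{Fi} makes transparent the passage from the single-subspace implications of \propref{prop-nat} to the direct sum $\T_3\oplus\T_7$, a step the paper leaves implicit.
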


\subsection{The $\f$B-connection in the classification}

In \cite{ManGri2}, it is introduced a natural con\-nect\-ion $\dot{D}$ on $\M$ in any basic class %,
%\ie $D\f=D\xi=D\eta=Dg=D\tg=0$,
by
\[
\begin{array}{l}
\dot{D}_xy=\n_xy+\frac{1}{2}\bigl\{\left(\n_x\f\right)\f
y+\left(\n_x\eta\right)y\cdot\xi\bigr\}-\eta(y)\n_x\xi.
\end{array}
\]
In \cite{ManIv37}, this connection is called a
\emph{$\f$B-connection}. It is studied for some classes of the
manifolds $\M$ in \cite{ManGri2,Man3,Man4,ManIv37}. The
$\f$B-connection is the odd-dimensional analogue of the
B-connection on the corresponding almost complex manifold with
Norden metric, studied  in \cite{GaGrMi} for the class of the
conformal K\"ahler manifold with Norden metric.

This connection has a torsion tensor and torsion 1-forms as
follows:
\begin{gather}
\begin{array}{l}
\dot{T}(x,y,z)=\bigl\{-\frac{1}{2}F(x,\f y,\f^2 z)+\eta(x)F(y,\f z,\xi)%\\[4pt]
%\phantom{\dot{T}(x,y,z)=} %
+\eta(z)F(x,\f y,\xi)\bigr\}_{[x\leftrightarrow y]}, \label{TD}
\end{array}%\\[4pt]
\\[4pt]
\begin{array}{c}
    \dot{t}=\frac{1}{2}\left\{\ta^*+\ta^*(\xi)\eta\right\},\quad
    \dot{t}^{*}=-\frac{1}{2}\left\{\ta+\ta(\xi)\eta\right\},\quad %\\[4pt]
    \hat{\dot{t}}=-\om\circ\f. \label{tB} %\nonumber
\end{array}
\end{gather}

Applying Propositions \ref{prop-p_1i}, \ref{prop-p_2j},
\ref{prop-p_3k} and equation  \eqref{p_41} for the torsion tensor
$\dot{T}$ from \eqref{TD}, we obtain the components of $\dot{T}$
in each of the subspaces $\W_{i,j}$:
%\begin{subequations}
\begin{equation}\label{pijT-B}
\begin{array}{rl}
%\begin{aligned}
&p_{1,1}(\dot{T})(x,y,z)=0,\quad\\[4pt]
&p_{1,2}(\dot{T})(x,y,z)=-\frac{1}{4}\left\{F(\f^2 x,\f^2 y,\f z)%-F(\f^2 y,\f^2 x,\f z)%\right.%\\[4pt]
%&\phantom{p_{12}(\dot{T})(x,y,z)=-\frac{1}{4}\left\{\right.}\left.
-F(\f x,\f y,\f z)%+F(\f y,\f x,\f z)
\right\}_{[x\leftrightarrow y]},\\[4pt]
&p_{1,3/1,4}(\dot{T})(x,y,z)=-\frac{1}{8}\bigl\{F(\f^2 z,\f^2 y,\f
x)
\pm F(\f^2 x,\f^2 y,\f z)\\[4pt]%
&\phantom{p_{1,3/1,4}(\dot{T})(x,y,z)=-\frac{1}{8}\bigl\{}%
\pm F(\f x,\f y,\f z)
\bigr\}_{[x\leftrightarrow y]},\\[4pt]
&p_{2,1/2,2}(\dot{T})(x,y,z)=-\frac{1}{2}\eta(z)\left\{F(\f^2 x,\f
y,\xi)\mp F(\f x, y,\xi)
\right\}_{[x\leftrightarrow y]},\\[4pt]
&p_{3,1/3,2}(\dot{T})(x,y,z)=
                \frac{1}{4}\bigl\{\eta(y)\left[F(\f^2 x,\f z,\xi)\pm F(\f^2 z,\f x,\xi)
                \right.\\[4pt]
                &\phantom{p_{3,1/3,2}(\dot{T})(x,y,z)=
                \frac{1}{4}\bigl\{\eta(y)\left[\right.}\left.
                -F(\f x, z,\xi)\mp F(\f z, x,\xi)\right]\bigr\}_{[x\leftrightarrow y]},\\[4pt]
&p_{3,3}(\dot{T})(x,y,z)=
                \frac{1}{4}\bigl\{\eta(y)\left[F(\f^2 x,\f z,\xi)+F(\f^2 z,\f x,\xi)
\right.\\[4pt]
                &\phantom{p_{3,3}(\dot{T})(x,y,z)=
                \frac{1}{4}\bigl\{\eta(y)\left[\right.}\left.
                +F(\f x, z,\xi)+F(\f z, x,\xi)\right]\bigr\}_{[x\leftrightarrow y]},\\[4pt]
%\end{array}
%\end{equation}
%\begin{equation}
%\begin{array}{rl}
%
&p_{3,4}(\dot{T})(x,y,z)=\frac{1}{4}\bigl\{\eta(y)\left[F(\f^2
x,\f z,\xi)-F(\f^2 z,\f x,\xi)+F(\f x, z,\xi)
\right.\\[4pt]
                &\phantom{p_{34}(\dot{T})(x,y,z)=
                \frac{1}{4}\bigl\{\eta(y)\left[\right.}\left.
                -F(\f z, x,\xi)
                +2F(\xi,\f x,\f^2 z)\right]\bigr\}_{[x\leftrightarrow y]},\\[4pt]
&p_{4,1}(\dot{T})(x,y,z)=\,\eta(z)\left\{\eta(x)\omega(\f
y)-\eta(y)\omega(\f x)\right\}.
%\end{aligned}
\end{array}
\end{equation}
%\end{subequations}

Such a way we establish the position of the torsion of $\dot{D}$
in the classification \eqref{TT-15podpr} as follows %in the
%following %as in \propref{prop-nat}.
%
\begin{proposition}\label{prop-fB}
The torsion $\dot{T}$ of the $\f$B-connection on $\M$ belongs to
\(
\T_{3}\oplus\T_{4}\oplus\cdots\oplus\T_{15}\). %More precisely, if
%$M\in\F_3$ (resp. $\F_7$) then $\ddot{T}\in\T_3\oplus\T_6$ (resp.
%$\T_7\oplus\T_{12}$).
\end{proposition}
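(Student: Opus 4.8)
The plan is to exploit the orthogonality of the decomposition \eqref{TT-15podpr} and to pin down $\dot{T}$ through its projections. By \eqref{Ti} and \eqref{W11W13} the first two summands assemble into $\T_1\oplus\T_2=\W_{1,1,1}\oplus\W_{1,1,2}=\W_{1,1}$, and $p_{1,1}$ from \propref{prop-p_1i} is precisely the orthogonal projector of $\T$ onto $\W_{1,1}$. Hence, by orthogonality, the inclusion $\dot{T}\in\T_3\oplus\cdots\oplus\T_{15}$ is \emph{equivalent} to the single vanishing $p_{1,1}(\dot{T})=0$. So the entire proposition reduces to computing this one projection.

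First I would substitute the torsion \eqref{TD} into the formula for $p_{1,1}$ in \propref{prop-p_1i}. Each of the six slots there carries a factor $\f$ or $\f^2$; since $\eta\circ\f=0$ and $\eta(\f^2\cdot)=0$, every summand of \eqref{TD} bearing a factor $\eta(\cdot)$ is annihilated, leaving only $-\tfrac12\{F(x,\f y,\f^2 z)\}_{[x\leftrightarrow y]}$ to contribute. On the contact distribution $H=\ker\eta$, where $\f^2=-\Id$, I would then reduce the four evaluations $\dot{T}(\f^2 x,\f^2 y,\f^2 z)$, $\dot{T}(\f x,\f y,\f^2 z)$, $\dot{T}(\f x,\f^2 y,\f z)$, $\dot{T}(\f^2 x,\f y,\f z)$ using the defining symmetries $F(x,y,z)=F(x,z,y)$ and $F(x,y,z)=F(x,\f y,\f z)$ (the latter valid on $H$). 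This yields eight $F$-terms, and one checks they cancel in pairs, giving $p_{1,1}(\dot{T})=0$, the first line of \eqref{pijT-B}.

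The hard part will be this pairwise cancellation, since the eight terms do not match slot-by-slot: one must first transport the $\f$'s among the last two arguments via $F(x,\f y,z)=-F(x,y,\f z)$ and fold matching $\f$-pairs through $F(x,\f y,\f z)=F(x,y,z)$ (both consequences of the defining symmetries and $\f^2=-\Id$ on $H$) before the terms pair up. Because $p_{1,1}(\dot{T})\in\W_1$ is determined by its values on $H$, the vanishing on $H$ gives $p_{1,1}(\dot{T})=0$ identically, and hence $\dot{T}\in\T_3\oplus\cdots\oplus\T_{15}$. The remaining lines of \eqref{pijT-B} exhibit the projections $p_{1,2}(\dot{T})$, $p_{1,3/1,4}(\dot{T})$, $p_{2,1/2,2}(\dot{T})$, $p_{3,k}(\dot{T})$ and $p_{4,1}(\dot{T})$ as generically nonzero, which confirms that the sum cannot be shrunk to a proper subsum and that the stated position is sharp.
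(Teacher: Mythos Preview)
Your proposal is correct and follows essentially the same route as the paper: apply the projector $p_{1,1}$ from \propref{prop-p_1i} to the explicit torsion \eqref{TD}, observe that all $\eta$-terms die because every slot carries a $\f$ or $\f^2$, and then use the $F$-symmetries $F(x,y,z)=F(x,z,y)$ and $F(x,\f y,\f z)=F(x,y,z)$ on $H$ to cancel the remaining eight terms in pairs, yielding the first line of \eqref{pijT-B}. One small remark: you do not actually need the extra step of ``extending from $H$''; the formula for $p_{1,1}(T)(x,y,z)$ already feeds only $\f x,\f^2 x,\dots$ into $T$, so the computation you describe is automatically valid for arbitrary $x,y,z$, not merely for $x,y,z\in H$.
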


\subsection{The $\f$KT-connection in the classification}

In \cite{Man31}, it is introduced a natural connection on $\M$,
called a \emph{$\f$KT-connec\-tion}, which torsion tensor
$\ddot{T}$ is totally skew-symmetric, \ie a 3-form. The
$\f$KT-connection is the odd-dimensional analogue of the
KT-connection introduced in \cite{Mek-08} on the corresponding
class of quasi-K\"ahler almost complex manifolds with Norden
metric.

\begin{corollary}
The $\f$KT-connection exists on an almost contact B-metric
manifold $\M$ if and only if the tensor $\widehat{N}$ vanishes on
it.
\end{corollary}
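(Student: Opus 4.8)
The plan is to connect two characterizations already established in the excerpt: the existence of a totally skew-symmetric torsion and the vanishing of $\widehat{N}$. A $\f$KT-connection is by definition a natural connection whose torsion $\ddot{T}$ is a 3-form, that is, $\ddot{T}$ is skew-symmetric not only in its first two arguments (which every element of $\T$ satisfies) but in all three. First I would recall that, by \propref{prop-Nhat=0}, the class of almost contact B-metric manifolds with $\widehat{N}=0$ is exactly $\F_3\oplus\F_7$. So the statement reduces to proving that a $\f$KT-connection exists on $\M$ if and only if $M\in\F_3\oplus\F_7$. My approach is to translate the total skew-symmetry condition on $\ddot{T}$ into a condition on the structure class via the torsion classification.

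The key computational step uses relation \eqref{Hay}, which determines any metric connection from its torsion, together with the natural-connection conditions \eqref{1ab} that relate $Q$ (hence $T$) to the fundamental tensor $F$. First I would impose total skew-symmetry, $\ddot{T}(x,y,z)=-\ddot{T}(x,z,y)$, on top of the existing $\ddot{T}(x,y,z)=-\ddot{T}(y,x,z)$; a 3-form lies in a specific part of the decomposition \eqref{TT-15podpr}. Examining the defining conditions of the fifteen subspaces in Propositions \ref{prop-T1k}, \ref{prop-T2k} and \ref{prop-T3k}, the totally skew-symmetric tensors are precisely those supported in the subspaces whose characteristic relations are compatible with antisymmetry in the last pair, which singles out $\T_3$ (where $\sx T(x,y,z)=0$ combined with $T(x,y,z)=-T(\f x,\f y,z)$ forces the skew-symmetric behaviour) and $\T_7=\W_{2,1}$. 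I would then invoke \propref{prop-nat}, which gives $T\in\T_3\Rightarrow M\in\F_3$ and $T\in\T_7\Rightarrow M\in\F_7$, to obtain the forward implication: if a $\f$KT-connection exists, its torsion lies in $\T_3\oplus\T_7$, so $M\in\F_3\oplus\F_7$, i.e. $\widehat{N}=0$.

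For the converse I would start from $M\in\F_3\oplus\F_7$ and construct the totally skew-symmetric torsion explicitly from $F$. Using the characteristic conditions \eqref{Fi} for $\F_3$ and $\F_7$, together with \eqref{FT}, I would solve for a tensor $\ddot{T}$ satisfying both \eqref{1ab} and full antisymmetry; the natural candidate is obtained by totally antisymmetrizing the expression that \eqref{FT} provides, and one checks that in these two classes the resulting $\ddot{T}$ indeed reproduces $F$ through the former equality of \eqref{1ab}. The defining relations $\sx F(x,y,z)=0$ for $\F_3$ and the $\F_7$ conditions $F(x,y,\xi)=-F(y,x,\xi)=-F(\f x,\f y,\xi)$, $\ta=\ta^*=0$ are exactly what guarantees that this antisymmetrization is consistent, so $\ddot{T}$ is a well-defined 3-form and $D$ is natural by \thmref{thm-nat}.

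The main obstacle I anticipate is the converse direction: verifying that the antisymmetrized torsion genuinely recovers $F$ via \eqref{1ab} requires carefully using both the symmetry $F(x,y,z)=F(x,z,y)$ and the $\f$-relation $F(x,y,z)=F(x,\f y,\f z)+\eta(y)F(x,\xi,z)+\eta(z)F(x,y,\xi)$ from the class conditions, and checking that no obstruction arises on the $\xi$-direction. The cleanest route is probably to appeal to the general criterion for the existence of a connection with totally skew-symmetric torsion (which for these manifolds is equivalent to $\ddot{T}$ lying in $\T_3\oplus\T_7$) and then read off the membership from the already-computed projections, rather than solving the linear system by hand; this way the verification that $\widehat{N}=0$ exactly matches $\F_3\oplus\F_7$ is supplied directly by \propref{prop-Nhat=0}.
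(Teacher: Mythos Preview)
The paper's proof is a two-line argument: it cites the external result from \cite{Man31} that the $\f$KT-connection exists if and only if $M\in\F_3\oplus\F_7$, and then invokes \propref{prop-Nhat=0}, which says this class is exactly where $\widehat{N}=0$. No torsion-space analysis is performed.

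Your attempt to reprove the existence criterion internally via the decomposition has a concrete error. You assert that the totally skew-symmetric tensors are supported in $\T_3\oplus\T_7$, and you justify the $\T_3$ part by saying ``$\sx T(x,y,z)=0$ combined with $T(x,y,z)=-T(\f x,\f y,z)$ forces the skew-symmetric behaviour''. But by \propref{prop-T1k}, the condition $\sx T=0$ characterizes $\W_{1,3}=\T_4\oplus\T_5$, not $\W_{1,2}=\T_3$; the defining condition for $\T_3$ is $T(x,y,z)=-T(\f x,\f y,z)=T(\f x,y,\f z)$, which says nothing about skew-symmetry in the last pair. More importantly, the claim itself is false: the paper computes in \eqref{pijT-KT} and \propref{prop-fKT} that the actual $\f$KT torsion (which \emph{is} a 3-form) has nonzero components in $\T_6=\W_{1,4}$ and $\T_{12}=\W_{3,2}$ as well, so 3-forms are not confined to $\T_3\oplus\T_7$. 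Once the torsion lands in $\T_3\oplus\T_6\oplus\T_7\oplus\T_{12}$, \propref{prop-nat} no longer gives the conclusion you want, since the implications there are for single classes $\T_s$ (and $\T_6$, $\T_{12}$ both force $M\in\F_0$ when taken alone, which is clearly not what happens here).

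The converse direction you sketch is closer in spirit to what is actually done in \cite{Man31}, and the explicit formula \eqref{T37} in the paper is the outcome of that construction; but your forward direction, as written, does not go through.
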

\begin{proof}
It is proved in \cite{Man31} that $\f$KT-connection exists only on
$\M\in\F_3\oplus\F_7$, \ie the class of almost contact B-metric
manifolds, where $\xi$ is a Killing vector field and the cyclic
sum $\s$ of $F$ by three arguments is zero. According to
\propref{prop-Nhat=0}, the class $\F_3\oplus\F_7$ is characterized
by the condition $\widehat{N}=0$ which completes the proof.
\end{proof}

The unique $\f$KT-connection $\ddot{D}$ is determined by
    \[
        \begin{array}{l}
            g(\ddot{D}_xy,z)=g(\n_xy,z)+\frac{1}{2}\ddot{T}(x,y,z),
        \end{array}
    \]
    where the torsion tensor is defined by
\begin{equation}\label{T37} %
\begin{array}{l}
\ddot{T}(x,y,z)=-\frac{1}{2} \sx\bigl\{F(x,y,\f z)-3\eta(x)F(y,\f
z,\xi)\bigr\}\\[4pt]
\phantom{\ddot{T}(x,y,z)}=\left(\eta\wedge
\D\eta\right)(x,y,z)+\frac{1}{4}\sx
N(x,y,z). %
\end{array}
\end{equation} %
Obviously, the torsion forms of the $\f$KT-connection are zero.

From \eqref{T37}, in a similar way of \eqref{pijT-B}, we get the
following non-zero components of $\ddot{T}$:
\begin{equation}\label{pijT-KT}
\begin{array}{l}
%\begin{aligned}
%
p_{1,2}(\ddot{T})(x,y,z)=-\frac{1}{2}\bigl\{F(x,y,\f z)+F(y,z,\f
x)-F(z,x,\f y)\\[4pt]
\phantom{p_{1,2}(\ddot{T})(x,y,z)=-\frac{1}{2}}
-\eta(x)F(y,\f z,\xi)+\eta(y)F(z,\f x,\xi)+\eta(z)F(x,\f y,\xi)\bigr\},\\[4pt]
p_{1,4}(\ddot{T})(x,y,z)=-F(z,x,\f y)-\eta(x)F(y,\f z,\xi),\\[4pt]
p_{2,1}(\ddot{T})(x,y,z)=2\eta(z)F(x,\f y,\xi),\\[4pt]
p_{3,2}(\ddot{T})(x,y,z)=2\eta(x)F(y,\f z,\xi)+2\eta(y)F(z,\f x,\xi).%\\[4pt]
%
%\end{aligned}
\end{array}
\end{equation}

Therefore we have
%$T\in\T_{3}\oplus\T_{6}\oplus\T_{7}\oplus\T_{12}$, but
\begin{proposition}\label{prop-fKT}
The torsion $\ddot{T}$ of the $\f$KT-connection on $\M\in
\F_3\oplus\F_7$ belongs to \(
\T_{3}\oplus\T_{6}\oplus\T_{7}\oplus\T_{12}\). %More precisely, if
%$M\in\F_3$ (resp. $\F_7$) then $\ddot{T}\in\T_3\oplus\T_6$ (resp.
%$\T_7\oplus\T_{12}$).
\end{proposition}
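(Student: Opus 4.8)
The plan is to determine, for the $\f$KT-connection on $\M\in\F_3\oplus\F_7$, exactly which of the fifteen subspaces $\T_s$ contain a nonzero component of the totally skew-symmetric torsion $\ddot{T}$. Since $\ddot{D}$ is a natural connection, \propref{prop-nat} already tells us that each nonzero component forces a specific class among $\F_1,\dots,\F_{11}$; given that we are restricted to $\F_3\oplus\F_7$, only those subspaces $\T_s$ whose implication lands inside $\F_3\oplus\F_7$ can possibly be nonzero. From the table in \propref{prop-nat}, the relevant targets are $\F_3$ (from $\T_3$) and $\F_7$ (from $\T_7$), together with any subspaces whose implication is vacuously compatible because the component vanishes identically. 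So the first step is to compute the projections $p_{i,j}(\ddot{T})$ directly, exactly as was done for the $\f$B-connection in \eqref{pijT-B}.

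First I would substitute the explicit torsion expression \eqref{T37} into each projection formula from Propositions \ref{prop-p_1i}, \ref{prop-p_2j}, \ref{prop-p_3k} and into \eqref{p_41}, using that $\ddot{T}$ is a $3$-form (totally skew-symmetric) and that all three torsion forms $t,t^*,\hat{t}$ vanish, as noted right after \eqref{T37}. The vanishing of the torsion forms immediately kills several subspaces: by \eqref{W11W13} and the definitions of $\W_{1,1,1},\W_{1,3,1},\W_{3,1,1},\W_{3,1,2}$, any subspace distinguished by a nonzero trace $(t$ or $t^*)$ must receive a zero component, so $\T_1,\T_4,\T_9,\T_{10}$ drop out, and $\T_{15}=\W_{4,1}$ vanishes as well since its projection \eqref{p_41} is built from $\hat{t}$. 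The skew-symmetry of $\ddot{T}$ in all three arguments should then force $p_{1,1}(\ddot{T})=0$ (so $\T_2$ drops), leaving the computation to produce the four displayed components \eqref{pijT-KT} in $\W_{1,2}=\T_3$, $\W_{1,4}=\T_6$, $\W_{2,1}=\T_7$ and $\W_{3,2}=\T_{12}$, while all remaining projections $p_{1,3},p_{2,2},p_{3,1},p_{3,3},p_{3,4}$ vanish.

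The cleanest way to organize the argument is to verify that the four listed components in \eqref{pijT-KT} sum to $\ddot{T}$ and that they are mutually orthogonal members of $\T_3,\T_6,\T_7,\T_{12}$ respectively, whence by \thmref{thm-15podpr} these are precisely the subspaces into which $\ddot{T}$ decomposes. I would check the sum-to-identity relation $\sum p_{i,j}(\ddot{T})=\ddot{T}$ as a consistency test, and confirm each component lies in its claimed subspace by matching against the characteristic conditions in Propositions \ref{prop-T1k}, \ref{prop-T2k} and \ref{prop-T3k}. Because $\ddot{T}$ is a $3$-form, many of the antisymmetrization brackets $\{\cdot\}_{[x\leftrightarrow y]}$ collapse, which is what makes the surviving list short.

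The main obstacle will be the bookkeeping in the $\W_3$ block: the projections $p_{3,1},p_{3,2},p_{3,3},p_{3,4}$ all involve the auxiliary tensors $A_{3,k}$ from \propref{prop-p_3k}, each a combination of four terms of the form $T(\xi,\f^{\cdot}\cdot,\f^{\cdot}\cdot)$, and I must substitute \eqref{T37} and use the $\F_3\oplus\F_7$ defining conditions from \eqref{Fi} (in particular $F(\xi,y,z)=0$ on $\F_3$ and the specific $\xi$-behavior on $\F_7$) to see that only $p_{3,2}$ survives and the other three cancel. Getting the signs right in these cyclic-and-antisymmetric sums, and correctly invoking the closedness $\sx F(x,y,z)=0$ together with the $N$-expression \eqref{N} implicit in \eqref{T37}, is the delicate part; everything else is a direct, if tedious, substitution mirroring the $\f$B-connection computation already carried out in \eqref{pijT-B}.
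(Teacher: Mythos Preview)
Your core approach---substituting \eqref{T37} into the projection formulas of Propositions \ref{prop-p_1i}, \ref{prop-p_2j}, \ref{prop-p_3k} and \eqref{p_41} and reading off which $p_{i,j}(\ddot{T})$ survive---is exactly what the paper does; the result is precisely the list \eqref{pijT-KT}, and the conclusion follows.

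However, your opening heuristic based on \propref{prop-nat} is misguided and you should drop it. That proposition says: if the \emph{full} torsion $T$ of a natural connection lies in a single subspace $\T_s$, then $M$ lies in the corresponding $\F$-class. It says nothing about individual components of a torsion that spreads over several $\T_s$. In fact the outcome directly contradicts your heuristic: $\ddot{T}$ has genuinely nonzero components in $\T_6=\W_{1,4}$ and $\T_{12}=\W_{3,2}$, yet \propref{prop-nat} would assign $\T_6$ and $\T_{12}$ (as full torsions) to $M\in\F_0$. So the argument ``only $\T_s$ whose \propref{prop-nat}-target lands in $\F_3\oplus\F_7$ can be nonzero'' is false, and your escape clause about ``vacuously compatible because the component vanishes identically'' does not rescue it here. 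The rest of your plan---using the $3$-form property and the vanishing torsion forms to kill $\T_1,\T_2,\T_4,\T_9,\T_{10},\T_{15}$ quickly, and then checking the $\W_3$ block carefully with the $A_{3,k}$---is sound and matches the paper's computation.
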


\subsection{The $\f$-canonical connection in the classification}

In \cite{ManIv38}, it is introduced a natural connection
$\dddot{D}$ on
 $(M,\f,\xi,\allowbreak\eta,g)$, called a
\emph{$\f$-cano\-nic\-al connection}, if the torsion tensor
$\dddot{T}$ of $\dddot{D}$ satisfies the following identity:
\begin{equation}\label{T-can}
\begin{array}{c}
    \bigl\{\dddot{T}(x,y,z)-\dddot{T}(x,\f y,\f z)
    -\eta(x)\left\{\dddot{T}(\xi,y,z)
    -\dddot{T}(\xi, \f y,\f z)\right\}\\[4pt]
    -\eta(y)\left\{\dddot{T}(x,\xi,z)-\dddot{T}(x,z,\xi)-\eta(x)\dddot{T}(z,\xi,\xi)\right\}\bigr\}_{[y\leftrightarrow
    z]}=0.
\end{array}
\end{equation}

Let us remark that the restriction the $\f$-canonical connection
of $\M$ on the contact distribution $\ker(\eta)$ is the unique
canonical connection of the corresponding almost complex manifold
with Norden metric, studied in \cite{GaMi87}.

The torsion tensor of the the $\f$-canonical connection is
\begin{equation}\label{T-can-F}
\begin{array}{l}
\dddot{T}(x,y,z)=\dot{T}(x,y,z)-\frac{1}{8}\left\{N(\f^2 z,\f^2
y,\f^2 x)+2N(\f z,\f y,\xi)\eta(x)\right\}_{[x\leftrightarrow y]},
\end{array}
\end{equation}
where $\dot{T}$ is the torsion tensor of the $\f$B-connection from
\eqref{TD}. The torsion forms are the same as in \eqref{tB}.

In \cite{ManIv38}, it is proved that the $\f$B-connection and the
$\f$-canonical con\-nec\-tion of the manifold $\M$ coincide if and
only if $N(\f\cdot,\f\cdot)=0$, \ie  on any manifold from $\F_i$,
$i\in\{1,2,\dots,11\}\setminus \{3,7\}$, where the
$\f$KT-connection does not exist. For the rest basic classes,
where the $\f$KT-connection exists, we obtain
\begin{proposition}\label{prop-3D}
Let $\M$ be an arbitrary manifold in $\F_i$, $i\in\{3,7\}$. The
$\f$B-con\-nec\-tion $\dot{D}$ is the average connection of the
$\f$KT-connection $\ddot{D}$ and the $\f$-canonical con\-nec\-tion
$\dddot{D}$, \ie $2\dot{D}=\ddot{D}+\dddot{D}$.
\end{proposition}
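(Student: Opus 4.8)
The plan is to pass from the stated identity of connections to an identity of their torsion tensors, and then to reduce everything to a single $(0,3)$-tensor equation that can be checked directly on $\F_3\oplus\F_7$.

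First I would use that $\dot{D}$, $\ddot{D}$ and $\dddot{D}$ are all natural, hence metric, connections. The torsion of an affine combination $a\ddot{D}+b\dddot{D}$ with $a+b=1$ is the same combination $a\ddot{T}+b\dddot{T}$ of the torsions, and by \eqref{Hay} a metric connection is uniquely determined by its torsion. Therefore $2\dot{D}=\ddot{D}+\dddot{D}$, equivalently $\dot{D}=\frac12(\ddot{D}+\dddot{D})$ (a genuine connection, since the coefficients sum to $1$), holds if and only if $2\dot{T}=\ddot{T}+\dddot{T}$. This replaces the statement about connections by a purely algebraic identity among $\dot{T}$, $\ddot{T}$ and $\dddot{T}$.

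Next I would substitute \eqref{T-can-F}, which gives $\dddot{T}=\dot{T}-\frac18\left\{N(\f^2 z,\f^2 y,\f^2 x)+2N(\f z,\f y,\xi)\eta(x)\right\}_{[x\leftrightarrow y]}$. Plugging this into $2\dot{T}=\ddot{T}+\dddot{T}$ and cancelling one copy of $\dot{T}$ reduces the whole proposition to the single tensor identity $\ddot{T}(x,y,z)-\dot{T}(x,y,z)=\frac18\left\{N(\f^2 z,\f^2 y,\f^2 x)+2N(\f z,\f y,\xi)\eta(x)\right\}_{[x\leftrightarrow y]}$, to be verified on $\F_3\oplus\F_7$.

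Finally I would verify this identity by rewriting all three ingredients in terms of the fundamental tensor $F$: $\dot{T}$ from \eqref{TD}, $\ddot{T}$ from \eqref{T37}, and $N$ from \eqref{N} in the form $N(x,y,z)=\left\{F(\f x,y,z)-F(x,y,\f z)+(\n_x\eta)y\cdot\eta(z)\right\}_{[x\leftrightarrow y]}$. By linearity it suffices to treat the basic classes $\F_3$ and $\F_7$ separately. On $\F_3$ one has $F(\xi,\cdot,\cdot)=F(\cdot,\xi,\cdot)=0$, $\n\eta=0$ (so $\eta\wedge\D\eta=0$) and $\sx F=0$; these collapse both sides to the same combination of $F(\f x,y,z)$ and $F(x,y,\f z)$. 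On $\F_7$ the tensor $F$ is concentrated along $\xi$ with $F(x,y,\xi)=-F(y,x,\xi)=-F(\f x,\f y,\xi)$, and $\xi$ is Killing, equivalently $\widehat{N}=0$ by \propref{prop-Nhat=0} (so $\LL_{\xi}g=0$); substituting these relations again makes the two sides agree. Alternatively, the identity can be checked component-wise in each subspace $\W_{i,j}$ using the explicit projections \eqref{pijT-B} and \eqref{pijT-KT} of $\dot{T}$ and $\ddot{T}$ together with those of $\dddot{T}$ obtained from \eqref{T-can-F}. I expect the main obstacle to lie precisely in this last step: since $F$ does not vanish on $\F_3\oplus\F_7$, the identity is genuinely nontrivial, and establishing it requires carefully expanding $N(\f^2 z,\f^2 y,\f^2 x)$ and $N(\f z,\f y,\xi)$ in terms of $F$ and then invoking exactly the skew-symmetries and the cyclic relation $\sx F=0$ that characterize the two classes.
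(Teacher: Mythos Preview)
Your proposal is correct, and your alternative route---checking the identity component-wise in each $\W_{i,j}$ via the projections \eqref{pijT-B}, \eqref{pijT-KT} and the ones for $\dddot{T}$ coming from \eqref{T-can-F}---is precisely what the paper does: it lists, for $\F_3$, the projections $p_{1,2}$ and $p_{1,4}$ of all three torsions (finding $p_{1,2}(\dot{T})=p_{1,2}(\ddot{T})=p_{1,2}(\dddot{T})$ and $2p_{1,4}(\dot{T})=p_{1,4}(\ddot{T})$, $p_{1,4}(\dddot{T})=0$), and for $\F_7$ the projections $p_{2,1}$ and $p_{3,2}$ analogously, then concludes $2\dot{T}=\ddot{T}+\dddot{T}$ and passes to $2\dot{D}=\ddot{D}+\dddot{D}$ via \eqref{Hay}.

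Your \emph{primary} route is a mild but genuine variation: instead of decomposing into the $\W_{i,j}$, you first use \eqref{T-can-F} to eliminate $\dddot{T}$ and reduce everything to the single identity $\ddot{T}-\dot{T}=\frac18\{N(\f^2 z,\f^2 y,\f^2 x)+2N(\f z,\f y,\xi)\eta(x)\}_{[x\leftrightarrow y]}$, to be verified directly in $F$ on $\F_3$ and $\F_7$. This is more economical in that it bypasses the classification machinery of Sec.~\ref{sec:2}; the paper's approach, on the other hand, exploits that machinery to make the comparison essentially mechanical and to exhibit exactly which torsion components coincide and which differ. Either way the computation goes through; just be aware that in your direct verification on $\F_3$ you will need the full cyclic identity $\sx F=0$ (not merely $F(\xi,\cdot,\cdot)=F(\cdot,\xi,\cdot)=0$) to collapse the $N(\f^2 z,\f^2 y,\f^2 x)$ term, and on $\F_7$ the skew-symmetry $F(x,y,\xi)=-F(y,x,\xi)$ together with $F(x,y,\xi)=-F(\f x,\f y,\xi)$ is what makes the $N(\f z,\f y,\xi)$ term match.
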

\begin{proof}
 By virtue of \eqref{pijT-B}, \eqref{pijT-KT} and
\eqref{T-can-F}
we obtain: \\[4pt]
1) for $\F_3$
\[
\begin{split}
p_{1,2}(\dot{T})(x,y,z)&=p_{1,2}(\ddot{T})(x,y,z)
=p_{1,2}(\dddot{T})(x,y,z)\\[4pt]%
&= -\frac{1}{2}\left\{F(\f^2x,\f^2y,\f z)+F(\f^2y,\f^2z,\f
x)-F(\f^2z,\f^2x,\f y)\right\},\\[4pt]%
2p_{1,4}(\dot{T})(x,y,z)&=
p_{1,4}(\ddot{T})(x,y,z)=-F(\f^2z,\f^2x,\f y),\quad
p_{1,4}(\dddot{T})(x,y,z)=0;
\end{split}
\]
2) for $\F_7$
\[
\begin{split}
p_{2,1}(\dot{T})(x,y,z)&=p_{2,1}(\ddot{T})(x,y,z)
=p_{2,1}(\dddot{T})(x,y,z)= 2\eta(z)F(x,\f y,\xi),\\[4pt]%
2p_{3,2}(\dot{T})(x,y,z)&=
p_{3,2}(\ddot{T})(x,y,z)=2\left\{\eta(x)F(y,\f
z,\xi)-\eta(y)F(x,\f z,\xi)\right\},\\[4pt] p_{3,2}(\dddot{T})(x,y,z)&=0.
\end{split}
\]
Therefore, we establish that $2\dot{T}=\ddot{T}+\dddot{T}$ for
$\F_3$ and $\F_7$. Then, using \eqref{Hay}, we obtain
$2\dot{Q}=\ddot{Q}+\dddot{Q}$ for the corresponding tensors
$\dot{Q}(x,y,z)=g(\dot{D}_xy-\n_xy,z)$,
$\ddot{Q}(x,y,z)=g(\ddot{D}_xy-\n_xy,z)$,
$\dddot{Q}(x,y,z)=g(\dddot{D}_xy-\n_xy,z)$. Therefore, we
have the statement.
\end{proof}

\propref{prop-nat} and \propref{prop-3D} imply
\begin{corollary}
The torsion of the $\f$-canonical connection on $\M$ belongs to
$\T_{3}$ and  $\T_{7}$ if and only if $\M$ belongs to $\F_3$ and
$\F_7$, respectively.
\end{corollary}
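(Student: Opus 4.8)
The plan is to prove each equivalence through its two implications, using that the $\f$-canonical connection $\dddot{D}$ is a natural connection on $\M$. The ``only if'' parts are immediate: since $\dddot{D}$ is natural, \propref{prop-nat} applies directly to its torsion $\dddot{T}$, giving $\dddot{T}\in\T_3\Rightarrow\M\in\F_3$ and $\dddot{T}\in\T_7\Rightarrow\M\in\F_7$. Nothing further is needed in this direction.

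For the ``if'' parts I would invoke \propref{prop-3D}. Assuming $\M\in\F_3$ (resp.\ $\M\in\F_7$), that proposition gives $\dddot{T}=2\dot{T}-\ddot{T}$, so I can locate $\dddot{T}$ by computing its $\W_{i,j}$-components from the formulas \eqref{pijT-B} for $\dot{T}$ and \eqref{pijT-KT} for $\ddot{T}$. On $\F_3$ the defining conditions $F(\xi,\cdot,\cdot)=F(\cdot,\xi,\cdot)=0$ and $\sx F=0$ annihilate every component carrying a factor $\eta$ together with the component $p_{1,3}$, confining both $\dot{T}$ and $\ddot{T}$ to $\W_{1,2}\oplus\W_{1,4}=\T_3\oplus\T_6$; the relation $2p_{1,4}(\dot{T})=p_{1,4}(\ddot{T})$ already recorded in the proof of \propref{prop-3D} then yields $p_{1,4}(\dddot{T})=0$, leaving only the $p_{1,2}$-component and hence $\dddot{T}\in\T_3$. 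Symmetrically, on $\F_7$ the conditions $\ta=\ta^*=0$ and $F(x,y,\xi)=-F(y,x,\xi)=-F(\f x,\f y,\xi)$ confine $\dot{T}$ and $\ddot{T}$ to $\W_{2,1}\oplus\W_{3,2}=\T_7\oplus\T_{12}$, and the relation $2p_{3,2}(\dot{T})=p_{3,2}(\ddot{T})$ from \propref{prop-3D} forces $p_{3,2}(\dddot{T})=0$, whence $\dddot{T}\in\T_7$.

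The main obstacle I foresee lies entirely in the ``if'' direction, in certifying that \emph{no} component of $\dddot{T}$ survives outside the claimed summand. The global localizations of \propref{prop-fB} and \propref{prop-fKT} only place $\dot{T}$ and $\ddot{T}$ within $\T$ for the whole class $\F_3\oplus\F_7$; the genuinely class-specific work is to check, from \eqref{pijT-B} and \eqref{pijT-KT}, that on $\F_3$ the horizontal piece $p_{1,3}(\dot{T})$ vanishes and that on $\F_7$ the complementary mixed pieces vanish, so that the cancellations $p_{1,4}(\dddot{T})=0$ and $p_{3,2}(\dddot{T})=0$ really do isolate $\T_3$ and $\T_7$. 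Once these vanishings are secured both equivalences follow, the surviving component $p_{1,2}(\dddot{T})$ (resp.\ $p_{2,1}(\dddot{T})$) being nonzero away from $\F_0$ so that the membership is not vacuous.
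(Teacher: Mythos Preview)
Your proposal is correct and follows essentially the same route as the paper: the ``only if'' direction from \propref{prop-nat}, the ``if'' direction from the component computations carried out in the proof of \propref{prop-3D}, where $p_{1,4}(\dddot{T})=0$ on $\F_3$ and $p_{3,2}(\dddot{T})=0$ on $\F_7$ are already stated explicitly. The paper simply writes that the corollary is implied by those two propositions, whereas you spell out the intermediate checks (confinement of $\dot{T}$, $\ddot{T}$ to $\W_{1,2}\oplus\W_{1,4}$ on $\F_3$ and to $\W_{2,1}\oplus\W_{3,2}$ on $\F_7$) that make the citation legitimate.
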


\begin{remark}
The implications in \propref{prop-nat} become equivalences for the
$\f$-canonical connection on $\M\in\F_i$,
$i\in\{1,2,\dots,11\}\setminus \{3,7\}$, according to
\cite{ManIv38}.
\end{remark}

\section*{Acknowledgments}
The authors wish to thank Stefan Ivanov for his useful advices
about this work. This work was financially supported by the
Scientific Research Fund, Paisii Hilendarski University of
Plovdiv, Bulgaria and the German Academic Exchange Service (DAAD).

\end{document}